\numberwithin{equation}{section}
\newtheorem{theorem}{Theorem}[section]
\newtheorem{lemma}[theorem]{Lemma}
\newtheorem{definition}[theorem]{Definition}
\newtheorem{corollary}[theorem]{Corollary}
\newtheorem{proposition}[theorem]{Proposition}
\newtheorem{remark}[theorem]{Remark}
\title[Nonlocal parabolic inverse problems]{The Calder\'on problem for nonlocal parabolic operators: A new reduction from the nonlocal to the local}
\author[C.-L. Lin]{Ching-Lung Lin}
\address{Department of Mathematics, National Cheng- Kung University, Tainan 701, Taiwan.}
\email{cllin2@mail.ncku.edu.tw}
\author[Y.-H. Lin]{Yi-Hsuan Lin}
\address{Department of Applied Mathematics, National Yang Ming Chiao Tung University, Hsinchu 30050, Taiwan}
\email{yihsuanlin3@gmail.com}
\author[G. Uhlmann]{Gunther Uhlmann}
\address{Department of Mathematics, University of Washington
	and Institute for Advanced Study, the Hong Kong University of Science
	and Technology}
\email{gunther@math.washington.edu}
\newcommand{\R}{{\mathbb R}}
\newcommand{\N}{{\mathbb N}}
\newcommand{\vareps}{\varepsilon}
\newcommand{\eps}{\epsilon}
\newcommand {\p} {\partial}
\newcommand{\LC}{\left(}
\newcommand{\RC}{\right)}
\newcommand{\wt}{\widetilde}
\newcommand{\norm}[1]{\lVert #1 \rVert}
\newcommand{\abs}[1]{\left\lvert #1 \right\rvert}
\DeclareMathOperator{\supp}{supp} 
\begin{document}

	\maketitle
	\begin{abstract}
		
		In this article, we investigate the Calder\'on problem for nonlocal parabolic equations, where we are interested to recover the leading coefficient of nonlocal parabolic operators. The main contribution is that we can relate both (anisotropic) variable coefficients local and nonlocal Calder\'on problem for parabolic equations. More concretely, we show that the (partial) Dirichlet-to-Neumann map for the nonlocal parabolic equation determines the (full)  Dirichlet-to-Neumann map for the local parabolic equation. This article extends our earlier results \cite{LLU2022para} by using completely different methods. Moreover, the results hold for any spatial dimension $n\geq 2$.
		
		\medskip
		
		\noindent{\bf Keywords.} Calderón problem, nonlocal parabolic operators, Dirichlet-to-Neumann map, Cauchy data
		
		\noindent{\bf Mathematics Subject Classification (2020)}: 35B35, 35R11, 35R30

	\end{abstract}

	\tableofcontents

	\section{Introduction}\label{sec: introduction}

	In this paper, we study the relation of the Calder\'on problem between both local and nonlocal parabolic equations. The key tool is to use a known extension problem for nonlocal parabolic operators $\LC \p_t -\nabla \cdot \sigma  \nabla \RC^s$, for $s\in (0,1)$, so that one can reduce the exterior measurements for nonlocal equations suitably to the boundary measurements for their local counterparts.    	
	Fractional type inverse problems have attracted a lot attention in recent years. The Calder\'on problem for the fractional Schr\"odinger equation was first investigated in \cite{GSU20}, where the authors demonstrated that the potential in a given region can be determined uniquely by the associated exterior measurement. The essential approach is relied on the \emph{strong unique continuation property} (strong UCP in short) for the fractional Laplacian $(-\Delta)^s$, so that one can deduce the useful \emph{Runge approximation} for the fractional Schr\"odinger equation. Based on these robust results, there is a huge literature developed in this direction. 
	
	Let us briefly summarize several works related to fractional inverse problems. In \cite{cekic2020calderon}, the authors determined both drift term and potential uniquely, which remains open for the local case. In \cite{CLL2017simultaneously}, the researchers used single measurement to determine unknown cavity, which cannot hold in their local counterparts. Meanwhile, in the works \cite{harrach2017nonlocal-monotonicity,harrach2020monotonicity}, the authors derived an if-and-only-if monotonicity relation, which leads to a simple reconstruction algorithm. 	Later, fractional/nonlocal type inverse problems are widely developed in the field of inverse problems, which consists of determination of singular potentials, lower order local perturbations, higher order fractional Laplacians, single measurement, and generalizations to many other nonlocal operators. We refer readers to those works \cite{bhattacharyya2021inverse,CMR20,CMRU20,GLX,CLL2017simultaneously,cekic2020calderon,feizmohammadi2021fractional,harrach2017nonlocal-monotonicity,harrach2020monotonicity,GRSU18,GU2021calder,lin2020monotonicity,LL2020inverse,LL2022inverse,LLR2019calder,LLU2022para,KLW2021calder,RS17,ruland2018exponential,RZ2022unboundedFracCald,RZ2022unboundedFracCald,RZ2022FracCondCounter,CRZ2022global,RZ-low-2022,CRTZ-2022,zimmermann2023inverse,GU2021calder,LRZ2022calder,LZ2023unique} and the references therein. We also point out that the recovery of leading coefficients has been addressed in recent works \cite{feizmohammadi2021fractional_closed,feizmohammadi2021fractional,choulli2023fractional} by using the local source-to-solution map. The main approach is based on the heat kernel representation of nonlocal operators and Kannai transmutation. These materials transfer the elliptic type nonlocal inverse problem to a local hyperbolic problem, which has been studied by utilizing the boundary control method.

	As a matter of fact, the solvability for the most of these fractional inverse problems based on the linear structure of nonlocal operators, in particular, the (strong) UCP and the Runge approximation play essential roles in related studies. Most of mentioned works, the authors investigated uniquely recovering problem for lower order coefficients, i.e., the main nonlocal operator is known a priori. In general, the determination of the leading parameter is harder than the recovery of lower coefficients. 
	In this work, we want to give another possible description, which builds a bridge between nonlocal and local problems, and this connection will help us to recover leading coefficients for a nonlocal parabolic operator.

	\subsection{Mathematical model and main results}

	Let $\Omega \subset \R^n$ be a bounded domain with Lipschitz boundary for $n\geq 2$, and $T\in (0,\infty)$. 
	Consider the fractional Calder\'on problem for the nonlocal parabolic equation 
	\begin{align}\label{equ nonlocal para}
		\begin{cases}
			\LC \p_t -\nabla \cdot \sigma(x) \nabla \RC^s u(t,x) =0 &\text{ in }\Omega_T,\\
			u(t,x) =f(t,x)  &\text{ in }(\Omega_e)_T, \\
			u(t,x)=0 &\text{ for }x\in \R^n \text{ and }t\leq -T,
		\end{cases}
	\end{align}
	where $\Omega_e:=\R^n\setminus \overline{\Omega}$. Throughout this work, we always assume the set 
	$$
	A_T:=(-T,T)\times A,
	$$
	for any subset $A\subseteq \R^n$. 
	Here $\sigma = \LC \sigma_{ik}(x)\RC_{1\leq i,k\leq n}\in C^2(\R^n; \R^{n\times n})$ satisfies the symmetry and ellipticity conditions 
	\begin{align}\label{ellipticity condition}
		\begin{cases}
			\sigma_{ik}=\sigma_{ki}, \text{ for all }i,j=1,2,\ldots, n, \\
			\lambda  |\xi|^2 \leq \displaystyle\sum_{i,k=1}^n \sigma_{ik}(x)\xi_i \xi_k \leq \lambda ^{-1}|\xi|^2 , 
		\end{cases}
	\end{align} 
	for any $x\in \R^n$ and $\xi=(\xi_1,\ldots, \xi_n)\in \R^n$, where $\lambda\in (0,1)$ is a positive constant.
	The well-posedness of \eqref{equ nonlocal para} has been studied by \cite{BKS2022calderon} with respect to suitable function spaces. Let $W\subset \Omega_e$ be a nonempty open subset , we are able to define the corresponding (partial) Dirichlet-to-Neumann (DN) map 
	\begin{align}\label{nonlocal DN map}
		\begin{split}
			\Lambda_{\sigma}^s :\, \mathbf{H}^s(W_T)& \to \mathbf{H}^{-s}(W_T), \\
			f&\mapsto 	\left. \LC \p_t -\nabla \cdot \sigma \nabla \RC^s u_f \right|_{W_T},
		\end{split}
	\end{align}
	where $u_f\in \mathbb{H}^s(\R^{n+1})$ is the unique solution to \eqref{equ nonlocal para}. These function spaces that we are using will be introduced in Section \ref{sec:preliminaries}.
	
	\begin{enumerate}[(IP1)]
		\item \label{IP1} \textbf{Nonlocal inverse problem.} Can we determine $\sigma$ by using the nonlocal DN map $\Lambda_\sigma^s$ given by \eqref{nonlocal DN map}?
	\end{enumerate}
	
	In fact, it is a nontrivial problem to determine the leading coefficient $\sigma$ for nonlocal models.

	On the other hand, let $\Omega\subset \R^n$ be a bounded set as before, and we consider the classical Calder\'on problem for (local) parabolic equations.  One tries to recover an unknown, possibly anisotropic leading coefficient $\sigma=\sigma(x): \overline{\Omega}\to \R^{n\times n}$, where $\sigma$ could be a sufficiently regular anisotropic symmetric matrix-valued function. More concretely, consider the local parabolic problem 
	\begin{align}\label{equ local para}
		\begin{cases}
			\LC \p_t -\nabla \cdot \sigma(x) \nabla \RC v(t,x)=0 &\text{ in }\Omega_T,\\
			v(t,x)=g(t,x) &\text{ on }(\p \Omega)_T, \\
			v(-T,x)=0  & \text{ for } x\in \Omega.
		\end{cases}
	\end{align}
	It is known that the initial-boundary value problem \eqref{equ local para} is well-posed (for example, see \cite[Chapter XVIII]{DL1992}), so that we can define the corresponding local (full) DN map 
	\begin{align}\label{local DN map}
		\begin{split}
			\Lambda_{\sigma}: \, L^2(-T,T;  H^{1/2}(\p \Omega))&\to L^2(-T,T;  H^{-1/2}(\p \Omega)),\\
			g&\mapsto  \left. \sigma \nabla v_g \cdot \nu \right|_{(\p \Omega)_T},
		\end{split}
	\end{align}
	where $v_g \in L^2(0,T; H^1(\Omega))$ is the unique solution to \eqref{equ local para}.

	\begin{enumerate}[(IP2)]
		\item \label{IP2}\textbf{Local inverse problem.} Can we determine $\sigma$ by using the local DN map $\Lambda_\sigma$ given by \eqref{local DN map}?
	\end{enumerate}
	
	In fact, the answer of \ref{IP2} is resolved for some cases. More precisely, in \cite{canuto2001determining}, the author investigated that if $\sigma=\sigma(x)$ is a scalar function, then $\Lambda_{\sigma}$ determines $\sigma$ in $\Omega$. In this article, we want to use similar ideas as in our earlier work \cite{LLU2022para}, where we want to show that the nonlocal DN map $\Lambda_\sigma^s$ determines the local DN map $\Lambda_\sigma$. In this work, we introduce an alternative approach, which is motivated by the very recent work \cite{CGRU2023reduction}. This new method is mainly based on the Caffarelli-Silvestre type extension problem for the nonlocal parabolic operator $\LC\p_t -\nabla \cdot \sigma \nabla \RC^s$.

	For $s\in (0,1)$, the extension formula for $\LC\p_t -\nabla \cdot \sigma \nabla \RC^s$ is characterized as follows. Let $u\in \mathbf{H}^s(\R^{n+1})$, let $\wt u=\wt u(t,x,y)$ be the solution of the Dirichlet problem for $(t,x,y)\in \R^{n+2}_+:=\R^{n+1}\times (0,\infty)$ with $(t,x)\in \R^{n+1}$ and $y\in (0,\infty)$ that 
	\begin{align}\label{degen para PDE}
		\begin{cases}
			y^{1-2s}\p_t \wt u -\nabla_{x,y}\cdot \LC y^{1-2s}\wt \sigma(x)\nabla_{x,y}\wt u\RC=0 &\text{ in }\R^{n+2}_+,\\
			\wt u(t,x,0)=u(t,x) &\text{ on }\R^{n+1},\\
			\wt u(t,x,y)=0 &\text{ on }  t\leq -T,
		\end{cases}
	\end{align}
	where $\wt \sigma$ is of the form 
	\begin{align}\label{tilde sigma}
		\wt \sigma(x)=\left( \begin{matrix}
			\sigma(x) & 0\\
			0 & 1 \end{matrix} \right),
	\end{align}
	where $\sigma$ always satisfies the condition \eqref{ellipticity condition} throughout this paper. Meanwhile, we use the notation $\nabla\equiv \nabla_x$, $\nabla_{x,y}\equiv (\nabla_x, \p_y)$ and $\mathrm{Id}$ stands for the identity in this article.
	Then the nonlocal parabolic operator $\LC\p_t -\nabla \cdot \sigma \nabla \RC^s$ is realized in terms of the Dirichlet-to-Neumann relation 
	\begin{align}\label{extension DN relation}
		\LC\p_t -\nabla \cdot \sigma \nabla \RC^s u(t,x) = d_s \lim_{y\to 0} y^{1-2s}\p_y \wt u (t,x,y),
	\end{align}
	where $d_s$ is a constant depending only on $s\in (0,1)$.
	In order to study \ref{IP1}, we will use \ref{IP2}, more specifically, we will show the following theorem, which states that the nonlocal DN map \eqref{nonlocal DN map} determines the local DN map \eqref{local DN map}.

	\begin{theorem}\label{Thm: main}
		Let $\Omega , W\subset \R^n$ be bounded sets with Lipschitz boundaries with $\overline{\Omega}\cap \overline{W}=\emptyset$, for $n\geq 2$, $T\in (0,\infty)$ and $s\in (0,1)$. Let $\wt \sigma \in C^2(\R^{n}; \R^{(n+1)\times (n+1)})$ be of the form \eqref{tilde sigma}, where $\sigma(x)\in C^2(\R^n; \R^{n\times n})$ satisfies \eqref{ellipticity condition} with $\sigma=\mathrm{Id}$ in $\Omega_e$. Let $\wt u$ be a weak solution to \eqref{degen para PDE}, then we have 
		\begin{align}\label{the function v}
			v(t,x):=\int_{0}^\infty y^{1-2s} \wt u (t,x,y) \, dy \in L^2(0,T;H^1(\Omega)),
		\end{align}
		such that the function $v$ is a weak solution to the parabolic equation \eqref{equ local para}. Moreover, the map
		\begin{align}
			\Lambda_\sigma^s :\, \mathbb{H}(\R^{n+1})\to H^{-s}(\R^{n+1}), \quad f\mapsto d_s \lim_{y\to 0}\left. y^{1-2s}\p_y \wt u \right|_{W_T}
		\end{align}
		determines the map
		\begin{align}
			\Lambda_\sigma : L^2(-T,T;H^{1/2}(\p \Omega))\to L^2(-T,T;H^{-1/2}(\p \Omega)), \quad g \mapsto  \left. \sigma \nabla v \cdot \nu \right|_{(\p \Omega)_T}.
		\end{align}
	\end{theorem}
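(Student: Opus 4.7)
The plan is to integrate the Caffarelli--Silvestre type extension equation \eqref{degen para PDE} in the variable $y$ over $(0,\infty)$ so as to reduce it to the local parabolic equation \eqref{equ local para}. Using the block form \eqref{tilde sigma}, the extension equation splits as
\begin{align*}
y^{1-2s}\p_t \wt u - y^{1-2s}\nabla_x\cdot\LC\sigma(x)\nabla_x \wt u\RC - \p_y\LC y^{1-2s}\p_y \wt u\RC = 0
\end{align*}
in $\R^{n+2}_+$. Integrating in $y$, interchanging the integral with $\p_t$ and with $\nabla_x \cdot \sigma\nabla_x$ via Fubini, and applying the fundamental theorem of calculus to the $\p_y$-term yields, formally,
\begin{align*}
\p_t v(t,x) - \nabla_x\cdot\LC\sigma(x)\nabla_x v(t,x)\RC = \lim_{y\to\infty}y^{1-2s}\p_y \wt u(t,x,y) - \lim_{y\to 0^+}y^{1-2s}\p_y \wt u(t,x,y).
\end{align*}
The limit at $y=\infty$ vanishes by the decay of $\wt u$ in the $A_2$-weighted energy space, and by \eqref{extension DN relation} together with $(\p_t - \nabla\cdot\sigma\nabla)^s u = 0$ in $\Omega_T$, the limit at $y=0^+$ also vanishes in $\Omega_T$. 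This yields the local parabolic equation \eqref{equ local para} for $v$ in $\Omega_T$.

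For the regularity $v\in L^2(0,T; H^1(\Omega))$, I would apply Cauchy--Schwarz, splitting the weight $y^{1-2s}$ against a suitable cutoff in $y$ to keep the auxiliary integral finite, and bound $|v|^2$ and $|\nabla_x v|^2$ pointwise in $(t,x)$ by weighted integrals of $|\wt u|^2$ and $|\nabla_x \wt u|^2$ over $y$. Integrating in $(t,x)$ and invoking the standard weighted energy estimate for the degenerate parabolic extension then gives the claim; the initial condition $v(-T,\cdot) = 0$ is inherited from that of $\wt u$.

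For the determination of $\Lambda_\sigma$ by $\Lambda_\sigma^s$, the key observation is that $\sigma = \mathrm{Id}$ in $\Omega_e$, so the extension equation in $(\Omega_e)_T\times(0,\infty)$ has known coefficients. On $W_T\times\{0\}$, both the Dirichlet trace $\wt u|_{y=0} = f$ and the weighted Neumann trace $d_s\lim_{y\to 0^+} y^{1-2s}\p_y\wt u = \Lambda_\sigma^s f$ are known from the measurement. By unique continuation for this degenerate parabolic extension operator (the parabolic analogue of the extension-based strong UCP used throughout the nonlocal inverse problems literature), this Cauchy data determines $\wt u$ throughout $(\Omega_e)_T\times(0,\infty)$, and hence $v|_{(\Omega_e)_T}$ via \eqref{the function v}. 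In particular, the Dirichlet trace $g := v|_{(\p\Omega)_T}$ and the conormal derivative $\sigma\nabla v\cdot\nu|_{(\p\Omega)_T} = \nabla v\cdot\nu|_{(\p\Omega)_T}$ (since $\sigma = \mathrm{Id}$ on the exterior side of $\p\Omega$) are both recovered from $f$ and $\Lambda_\sigma^s f$. A Runge-type approximation argument, standard in this setting, then shows that $\{g_f : f\in\mathbf{H}^s(W_T)\}$ is dense in the input space of $\Lambda_\sigma$, so the entire local DN map is determined.

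The main obstacle will be the unique continuation step: rigorously propagating the partial Cauchy data on $W_T\times\{0\}$ to determine $\wt u$ throughout $(\Omega_e)_T\times(0,\infty)$ for the degenerate parabolic extension operator, in a setting where the spatial variable $(x,y)$ is of dimension $n+1$ and the weight $y^{1-2s}$ is singular at the Cauchy hypersurface. The Fubini/interchange-of-limits arguments, the vanishing of the boundary term at $y=\infty$, and the Runge approximation should follow standard techniques in the Caffarelli--Silvestre framework.
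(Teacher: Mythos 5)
Your overall strategy — integrate the extension equation in $y$ to obtain the local parabolic equation for $v$, recover the Cauchy data of $v$ on $(\partial\Omega)_T$ from the nonlocal measurements, and close via a density argument — follows the paper's route, and the formal derivation of the equation for $v$ (Fubini plus the fundamental theorem of calculus, with the $y\to 0$ boundary term vanishing because $(\partial_t-\nabla\cdot\sigma\nabla)^su=0$ in $\Omega_T$ and the $y\to\infty$ term vanishing by decay) is exactly the paper's computation. However, two of the steps are substantially harder than your sketch allows for, and as stated your plan would not close either of them.

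First, the regularity $v\in L^2(0,T;H^1(\Omega))$ is not attainable by Cauchy--Schwarz against the $\mathcal{L}^{1,2}(\R^{n+2}_+;y^{1-2s}\,dt\,dx\,dy)$-energy. The obstruction is the tail $\int_M^\infty y^{1-2s}\wt u\,dy$: since $\int_0^\infty y^{1-2s}\,dy=\infty$ for every $s\in(0,1)$ and the weighted energy only puts $\wt u$ into a weighted $L^2$, no H\"older/Cauchy--Schwarz split controls the tail — one needs actual pointwise decay of $\wt u$ in $y$. The paper obtains this from the explicit Poisson-kernel representation \eqref{wt u expression} and Gaussian heat-kernel bounds (Lemma \ref{Lemma: decay estimate}, Proposition \ref{Prop: bound w}). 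Even with those bounds in hand, the estimate on $\nabla_x v$ for $s\in[\tfrac12,1)$ needs a separate subtraction argument: the paper compares $\nabla w$ against $\nabla\bigl(\int_y^\infty\mu^{1-2s}\wt g\,d\mu\bigr)\,u$ for an auxiliary extension $\wt g$ of a spatial cutoff, because the convolution kernel $|z|^{-(n+2s-1)}$ governing $\nabla v$ fails to be locally integrable once $s\ge\tfrac12$. Your Cauchy--Schwarz-plus-cutoff plan sees neither of these difficulties.

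Second, the density $\overline{\{v_f|_{(\partial\Omega)_T}: f\in C_c^\infty(W_T)\}}=L^2(-T,T;H^{1/2}(\partial\Omega))$ is not a ``standard Runge-type approximation''; it is the technical core of the paper (Proposition \ref{Prop: density}). The usual Runge property for nonlocal equations gives density of interior restrictions $u_f|_{\Omega_T}$ in $L^2(\Omega_T)$, which says nothing directly about $\partial\Omega$-traces of the $y$-integrated extension $v_f$. The paper proves the latter by a bespoke Hahn--Banach argument on the $(n+2)$-dimensional degenerate extension problem: it requires solving a backward degenerate parabolic adjoint problem \eqref{adjoint problem generalized_rmk} (Lemma \ref{Lemma: weak solvable}), realizing the adjoint solution as $\wt w=\wt u_1-\mathcal{E}_su_1+u_2$, justifying all integrations by parts with cutoff families $\eta_k,\zeta_k$ and a carefully designed $\beta_k$ normalized so that $\int_0^\infty y^{1-2s}\beta_k\,dy=1$, and invoking UCP for the \emph{nondegenerate} second-order parabolic operator in $\{y>0\}$ after reflecting across $W_T\times\{0\}$ — rather than a UCP at the degenerate characteristic boundary, which is what your plan appeals to. Invoking Runge approximation and a UCP for the degenerate extension operator, as you do, leaves this essential work undone.
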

	
	We can reformulate Theorem \ref{Thm: main} in terms of the next result.

	\begin{proposition}\label{Proposition main theorem}
		Adopting all assumptions of Theorem \ref{Thm: main}. Define the nonlocal (partial) Cauchy data $\mathcal{C}_{\sigma,W_T}^{s}$
		\begin{equation}
			\mathcal{C}_{\sigma,W_T}^{s}:=\LC f|_{W_T}, \left. \LC \p_t -\nabla \cdot \sigma \nabla \RC^s u_f \right|_{W_T} \RC \subset \widetilde{\mathbf{H}}^s(W_T) \times \mathbf{H}^{-s}(W_T),
		\end{equation}
		where $u_f \in \mathbf{H}^s(\R^{n+1})$ is the solution to \eqref{equ nonlocal para}. Define the local (full) Cauchy data 
		\begin{equation}
			\begin{split}
				\mathcal{C}_{\sigma,(\p \Omega)_T}&:= \LC g|_{(\p \Omega)_T}, \left. \sigma \nabla v_g \right|_{(\p \Omega)_T} \RC \\
				&\, \subset L^2(-T,T;H^{1/2}(\p \Omega))\times L^2(-T,T;H^{-1/2}(\p \Omega)),
			\end{split}
		\end{equation}
		where $v_g\in L^2(-T,T;H^1(\Omega))$ is the solution to \eqref{equ local para}. Then there exists a bounded linear map 
		\begin{equation}
			\begin{split}
				\mathrm{T}:	\mathcal{C}_{\sigma,W_T}^{s} &\to 	\mathcal{C}_{\sigma,(\p \Omega)_T} ,\\
				\LC  f, \Lambda_{\sigma}^s f \RC &\mapsto \LC v|_{(\p \Omega)_T}, \left. \sigma \nabla v \cdot \nu  \right|_{(\p \Omega)_T}\RC
			\end{split}
		\end{equation} 
		such that 
		\begin{equation}
			\overline{\mathrm{T}\LC \mathcal{C}_{\sigma,W_T}^s \RC}^{L^2(-T,T;H^{1/2}(\p \Omega))\times L^2(-T,T;H^{-1/2}(\p \Omega))}=\mathcal{C}_{\sigma,(\p \Omega)_T}.
		\end{equation}
		Here $\nu$ is the unit outer normal to $\p \Omega$ and $v(t,x)$ is defined by \eqref{the function v}.
	\end{proposition}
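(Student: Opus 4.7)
The plan is to verify that $\mathrm{T}$ is well-defined, linear, and bounded, and then to establish the density statement via a duality/Runge approximation argument. Well-definedness is immediate from Theorem \ref{Thm: main}: for every $f\in \widetilde{\mathbf{H}}^s(W_T)$, the nonlocal solution $u_f$, its Caffarelli--Silvestre-type extension $\widetilde u$, and the integrated function $v(t,x) = \int_0^\infty y^{1-2s}\widetilde u(t,x,y)\,dy$ all exist; the theorem places $v$ in $L^2(0,T;H^1(\Omega))$ and makes it a weak solution of \eqref{equ local para}, so the traces $v|_{(\partial\Omega)_T}$ and $\sigma\nabla v\cdot\nu|_{(\partial\Omega)_T}$ lie in the correct spaces and their pair belongs to $\mathcal{C}_{\sigma,(\partial\Omega)_T}$. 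Linearity is inherited from each step in the chain $f\mapsto u_f\mapsto \widetilde u\mapsto v$, and boundedness follows by composing the energy estimate for \eqref{equ nonlocal para}, the weighted energy estimate for \eqref{degen para PDE}, the $L^2(0,T;H^1(\Omega))$ bound on $v$ from Theorem \ref{Thm: main}, and the standard parabolic trace theorem.

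Next, I would reduce density of $\mathrm{T}(\mathcal{C}_{\sigma,W_T}^s)$ in $\mathcal{C}_{\sigma,(\partial\Omega)_T}$ to density of the Dirichlet traces alone. Well-posedness of \eqref{equ local para} yields boundedness of $g\mapsto v_g$ from $L^2(-T,T;H^{1/2}(\partial\Omega))$ into $L^2(0,T;H^1(\Omega))$ and hence of the associated local DN map. Therefore, if $v_n|_{(\partial\Omega)_T}\to g$ in $L^2(-T,T;H^{1/2}(\partial\Omega))$ along a sequence $v_n = \int_0^\infty y^{1-2s}\widetilde u_n\,dy$ arising from some $f_n$, the corresponding Neumann traces automatically converge to $\Lambda_\sigma g$. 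Thus it suffices to show that
\begin{equation}
\mathcal{V} := \bigl\{\, v|_{(\partial\Omega)_T} : f\in \widetilde{\mathbf{H}}^s(W_T)\,\bigr\}
\end{equation}
is dense in $L^2(-T,T;H^{1/2}(\partial\Omega))$.

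For the density of $\mathcal{V}$ I would use Hahn--Banach. Let $\varphi \in L^2(-T,T;H^{-1/2}(\partial\Omega))$ annihilate $\mathcal{V}$, and solve the backward adjoint parabolic problem $(-\partial_t - \nabla\cdot\sigma\nabla)w = 0$ in $\Omega_T$ with terminal datum $w(T,\cdot)=0$ and co-normal datum $\sigma\nabla w\cdot \nu = \varphi$ on $(\partial\Omega)_T$. Lifting $w$ to an extension $\widetilde w$ that satisfies the time-reversed analogue of \eqref{degen para PDE} on $\R^{n+2}_+$ and applying Green's identity in $\Omega_T\times (0,\infty)$ against $\widetilde u$, the orthogonality condition $\int_{(\partial\Omega)_T}\varphi\, v\,dS\,dt = 0$ rewrites, via the Dirichlet-to-Neumann relation \eqref{extension DN relation}, as an exterior nonlocal pairing of $f$ on $W_T$ against the nonlocal co-normal trace of $w$. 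Arbitrariness of $f\in \widetilde{\mathbf{H}}^s(W_T)$ forces these exterior nonlocal data of $w$ to vanish on $W_T$, and the strong unique continuation / Runge approximation framework for $(\partial_t - \nabla\cdot\sigma\nabla)^s$ available from \cite{BKS2022calderon,LLU2022para} then gives $w\equiv 0$ in $\Omega_T$, whence $\varphi = 0$.

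The main obstacle is this last step: producing the precise duality identity that simultaneously couples the $y$-integrated variable $v$ on $(\partial\Omega)_T$, the exterior datum $f$ on $W_T$, and the adjoint solution $w$, while handling the non-self-adjointness in time (the relevant dual operator is the time-reversed $(\partial_t-\nabla\cdot\sigma\nabla)^s$), the weighted limit $y\to 0^+$ appearing in \eqref{extension DN relation}, and the decay of $\widetilde u$ and $\widetilde w$ as $y\to\infty$. Once this bookkeeping is in place, the remainder reduces to standard UCP / Runge results for nonlocal parabolic operators.
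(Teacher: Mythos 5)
Your setup and the reduction to density of the Dirichlet traces are correct and match the paper's structure: well-definedness of $\mathrm{T}$ follows from Theorem \ref{Thm: main}, linearity is clear, boundedness follows from \eqref{equ nonlocal para estimate}, Proposition \ref{Prop: extension}\ref{item c prop extension}, the estimate \eqref{estimate u and v}, and the trace theorem, and continuity of $\Lambda_\sigma$ reduces density of pairs to density of the first components. However, for the density step you diverge from the paper in two material ways, and the heart of the matter is left unresolved. First, you take a boundary functional $\varphi\in L^2(-T,T;H^{-1/2}(\p\Omega))$ and solve the adjoint problem in $\Omega_T$ with Neumann data $\varphi$, whereas the paper takes a volume functional $\psi\in L^2(-T,T;\wt H^{-1}(\Omega))$, proves interior density of $\mathcal{V}$ in the solution set $\mathcal{D}$ first, and only then passes to the boundary by the trace estimate. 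The volume choice is not cosmetic: the adjoint problem \eqref{adjoint problem generalized} is posed on all of $\R^{n+2}_+$ with source $y^{1-2s}\psi$ and mixed boundary conditions, and its solvability (Lemma \ref{Lemma: weak solvable}) requires the nontrivial decomposition $\wt w = \wt u_1 - \mathcal{E}_s u_1 + u_2$, where $\wt u_1=u_1(t,x)$ is the $y$-independent lift of the local adjoint solution. This lift is exactly what makes the Green's identity between $\wt w$ and $\wt u_f$ over $\R^{n+2}_+$ go through after the cutoff and decay analysis in Steps 2a--2b and the weighted cutoff $\beta_k$.

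Your ``lift $w$ to an extension $\wt w$'' is precisely the part you label the main obstacle, and it is genuinely problematic as phrased: your $w$ is only defined on $\Omega_T$, while the Green's identity you want to write must be performed over a region of $\R^{n+1}_+$ large enough to reach $W_T\times\{0\}$, so you must first extend $w$ to the exterior $\Omega_e$ with conditions compatible with the degenerate extension structure and with the exterior Dirichlet condition of $\wt u_f$. The paper avoids this by designing the adjoint problem on $\R^{n+2}_+$ from the start, with $\wt w=0$ on $(\Omega_e)_T\times\{0\}$ and a Neumann condition on $\Omega_T\times\{0\}$. Finally, the unique continuation you invoke is the nonlocal UCP/Runge for $(\p_t-\nabla\cdot\sigma\nabla)^s$; the paper instead applies the classical second-order parabolic UCP to the local degenerate equation on the extension half-space (using that $\wt w$ and $y^{1-2s}\p_y\wt w$ both vanish on $W_T\times\{0\}$). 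Either UCP could in principle be made to work, but the rest of your argument is not in a shape where the nonlocal one plugs in, so at present the proof has a real gap at the coupling step.
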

	
	As shown in \cite{LLU2022para}, Theorem \ref{Thm: main} stands for the reduction of the Calder\'on problem for nonlocal parabolic equations to the local ones. The main difference is that we need to take the exterior data $f$ is taken from $\mathbf{H}^s((\Omega_e)_T)$, but not $\mathbf{H}^s(W_T)$, for a given open subset $W\subset \Omega_e$.
	
	\begin{remark}
		By using the conclusion of Theorem \ref{Thm: main}, we knows that the determination of leading coefficients for nonlocal parabolic operators depend on their local counterparts. It is natural since that the nonlocal DN map contains more data than the local DN map.
	\end{remark}

	\begin{corollary}\label{Corollary: uniqueness}
		Adopting all assumptions in Theorem \ref{Thm: main}. Assume that $\sigma=\sigma \mathrm{I_n}$ is an isotropic $n\times n$ matrix satisfying \eqref{ellipticity condition}. Then the nonlocal DN map $\Lambda_\sigma^s$ determines $\sigma$ in \ref{IP1} uniquely.
	\end{corollary}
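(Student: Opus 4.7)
The plan is to chain Theorem \ref{Thm: main} (equivalently Proposition \ref{Proposition main theorem}) with the classical uniqueness result of Canuto \cite{canuto2001determining} for the isotropic parabolic Calder\'on problem that is cited in the discussion of \ref{IP2}. Concretely, suppose two scalar coefficients $\sigma_1,\sigma_2$, each extended by $1$ outside $\Omega$ and satisfying \eqref{ellipticity condition}, yield the same nonlocal partial DN map $\Lambda^s_{\sigma_1}=\Lambda^s_{\sigma_2}$ on $\mathbf{H}^s(W_T)\to \mathbf{H}^{-s}(W_T)$. Then for every admissible exterior datum $f$ the pair $(f,\Lambda^s_{\sigma_i}f)$ belongs to $\mathcal{C}^s_{\sigma_i,W_T}$ and is independent of $i\in\{1,2\}$.

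Next, I would invoke Proposition \ref{Proposition main theorem}: the image $\mathrm{T}\LC\mathcal{C}^s_{\sigma_i,W_T}\RC$ is dense in the local Cauchy data set $\mathcal{C}_{\sigma_i,(\p\Omega)_T}$ with respect to $L^2(-T,T;H^{1/2}(\p\Omega))\times L^2(-T,T;H^{-1/2}(\p\Omega))$. Since the map $\mathrm{T}$ is bounded and is determined by the nonlocal Cauchy data alone, its image and hence the full closure coincide for $i=1,2$. Consequently the local full DN maps agree, $\Lambda_{\sigma_1}=\Lambda_{\sigma_2}$, on $L^2(-T,T;H^{1/2}(\p \Omega))$. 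Applying Canuto's uniqueness theorem to these scalar coefficients forces $\sigma_1=\sigma_2$ in $\Omega$, and combined with the normalization $\sigma_i=1$ in $\Omega_e$ we conclude $\sigma_1\equiv\sigma_2$ on $\R^n$, which is the desired uniqueness for \ref{IP1}.

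The main obstacle I anticipate is purely bookkeeping: verifying that the density assertion in Proposition \ref{Proposition main theorem} transfers faithfully into genuine equality of the two local DN maps as bounded operators, rather than equality only on a dense subspace. This requires exploiting continuity of $\mathrm{T}$ together with the well-posedness of the local forward problem \eqref{equ local para} to promote the coincidence of graph-closures to the coincidence of the operators themselves. Once this step is in hand, the rest of the argument is a direct citation of the known scalar parabolic result, so the proof of the corollary should be very short.
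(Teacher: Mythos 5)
Your proposal is correct and takes essentially the same route as the paper: Theorem \ref{Thm: main} (via Proposition \ref{Proposition main theorem}) shows that the nonlocal DN map determines the local DN map, and then the isotropic uniqueness result of \cite{canuto2001determining} finishes the argument. Your remark about promoting equality on a dense set of Cauchy data to equality of the bounded operators $\Lambda_{\sigma_i}$ by continuity of the local DN map is a reasonable extra bookkeeping step that the paper treats as implicit.
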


	Next, we are want to know the case when the leading coefficient is a matrix-valued function. It is known that the non-uniqueness result has been investigated by \cite{guenneau2012transformation} for the local case (i.e. $s=1$). Let $\sigma(x)=\LC\sigma_{ij}(x)\RC_{1\leq i, j\leq n}\in \LC \sigma_{ik}(x)\RC_{1\leq i,k\leq n}\in C^2(\R^n; \R^{n\times n})$ be a matrix-valued function satisfying \eqref{ellipticity condition}. 
	Consider $\Phi:\overline{\Omega}\to \overline{\Omega}$ as a $C^\infty$ diffeomorphism such that  $\Phi|_{\p \Omega}=\mathrm{Id}$ (the identity map), then $v(t,x)$ is a solution to the parabolic equation
	\begin{align}\label{equ nonunique 1}
		\p_t  v-\nabla \cdot (\sigma \nabla v) =0 \text{ for }(t,x)\in \Omega_T
	\end{align}
	if and only if $\wt v(t,y):=v(t,\Phi^{-1}(y))$ is a solution to 
	\begin{align}\label{equ nonunique 2}
	\p_t \LC \Phi_\ast 1(y) \wt v \RC -\nabla \cdot \LC \Phi_\ast\sigma \nabla \wt v\RC =0 \text{ for }(t,y)\in \Omega_T,
	\end{align}
	where $\Phi_\ast$ denotes the \emph{push-forward}  
	\begin{align*}
		\begin{cases}
			\Phi_\ast 1 (y) =\left. \frac{1}{\det (D\Phi)(x)}\right|_{x=\Phi^{-1}(y)}, \\
			\Phi_\ast\sigma(y) = \left. \frac{D\Phi^T (x) \sigma (x)D\Phi(x)}{\det (D\Phi)(x)}\right|_{x=\Phi^{-1}(y)} .
		\end{cases}
	\end{align*}
	Here $D\Phi$ denotes the (matrix) differential of $\Phi$ and $D\Phi^T$ is the transpose of $D\Phi$. Since $\Phi|_{\p \Omega}=\mathrm{Id}$, one can see that the (full) Cauchy data (or DN map) of \eqref{equ nonunique 1} and \eqref{equ nonunique 2} are the same, i.e.,
	\begin{align*}
		\mathcal{C}_{\sigma, (\p \Omega)_T}:=\LC  v|_{\sigma, (\p \Omega)_T}, \left. \sigma \p_\nu v \right|_{(\p \Omega)_T}\RC=\LC \wt v|_{(\p \Omega)_T}, \left. \Phi_\ast \sigma \p_\nu \wt v \right|_{(\p \Omega)_T}\RC:=\mathcal{C}_{\Phi_\ast \sigma ,(\p \Omega)_T}.
	\end{align*}
	This implies the non-uniqueness property for leading coefficients holds for local parabolic operators in the anisotropic case.
	
	Similar to the local case, our final result in this paper is to demonstrate that non-uniqueness also holds for the nonlocal parabolic case.
	
	\begin{corollary}[Non-uniqueness]\label{Corollary: non-unique}
		Let $\Omega , W\subset \R^n$ be bounded sets with Lipschitz boundaries with $\overline{\Omega}\cap \overline{W}=\emptyset$, for $n\geq 2$, $T\in (0,\infty)$ and $s\in (0,1)$. Let $\wt \sigma \in C^2(\R^{n}; \R^{(n+1)\times (n+1)})$ be a matrix-valued function in $\R^n$ satisfying \eqref{ellipticity condition}. Then $\Lambda_{\sigma, W_T}^s$ determine $\sigma$ up to diffeomorphism, that is, there exists a Lipschitz invertible map $\Phi: \R^n\to \R^n$ with $\Phi|_{W}=\mathrm{Id}$ such that 
		\[
		\Lambda_{\sigma, W_T}^s(f)=\Lambda_{\Phi_\ast\sigma, W_T}^s(f), \text{ for any }f\in C^\infty_c(W_T),
		\]
		where 
		\[
		\Phi_\ast\sigma(y) = \left. \frac{D\Phi^T (x) \sigma(x)D\Phi(x)}{\det (D\Phi)(x)}\right|_{x=\Phi^{-1}(y)}.
		\]

	\end{corollary}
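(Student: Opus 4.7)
The plan is to derive Corollary \ref{Corollary: non-unique} from a Calderón-type change of variables performed directly on the Caffarelli--Silvestre extension \eqref{degen para PDE}, lifting the local pushforward identity recalled between \eqref{equ nonunique 1} and \eqref{equ nonunique 2} to the nonlocal setting. The key geometric observation is that any Lipschitz diffeomorphism $\Phi:\R^n\to\R^n$ admits a trivial lift $\widetilde{\Phi}(x,y):=(\Phi(x),y)$ to $\R^{n+1}_+$ which fixes both the extension variable $y$ and the time variable $t$; hence $\widetilde{\Phi}$ commutes with the weight $y^{1-2s}$ and with the Neumann trace $d_s\lim_{y\to 0}y^{1-2s}\p_y(\cdot)$ that realizes the nonlocal operator via \eqref{extension DN relation}.

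Concretely, for $f\in C^\infty_c(W_T)$ let $\widetilde{u}_f$ be the extension of the nonlocal solution associated with $\sigma$ as in \eqref{degen para PDE}, and set $w(t,\eta,y):=\widetilde{u}_f(t,\Phi^{-1}(\eta),y)$. A weak-form computation identical in structure to the one relating \eqref{equ nonunique 1} to \eqref{equ nonunique 2}, but run on the $(t,x)$-variables while leaving $y$ undisturbed, shows that $w$ satisfies a degenerate parabolic equation in $\R^{n+2}_+$ whose coefficient has block form \eqref{tilde sigma} with top-left block $\Phi_\ast\sigma$, together with an extra density factor $\Phi_\ast 1$ in front of $y^{1-2s}\p_t w$. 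Since $\Phi|_W=\mathrm{Id}$ forces $D\Phi=\mathrm{Id}$ and $\Phi_\ast 1=1$ on $W$, the Dirichlet trace satisfies $w(\cdot,\eta,0)=u_f(\cdot,\eta)=f(\cdot,\eta)$ for $\eta\in W$, and the Neumann limit $d_s\lim_{y\to 0}y^{1-2s}\p_y w$ coincides on $W_T$ with $d_s\lim_{y\to 0}y^{1-2s}\p_y\widetilde{u}_f=\Lambda_\sigma^s f$. The final step is to identify $w$, possibly after a gauge transformation absorbing $\Phi_\ast 1$ inside $\Omega$, with the canonical extension \eqref{degen para PDE} of $f$ for the nonlocal operator $(\p_t-\nabla\cdot\Phi_\ast\sigma\,\nabla)^s$, which then yields $\Lambda_{\Phi_\ast\sigma}^s f=\Lambda_\sigma^s f$ on $W_T$.

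The main obstacle I expect lies in this last identification: a bare change of variables produces a density $\Phi_\ast 1$ in front of $\p_t$, whereas the canonical extension \eqref{degen para PDE} for $\Phi_\ast\sigma$ carries no such density. One must argue, in parallel with the local calculation passing from \eqref{equ nonunique 1} to \eqref{equ nonunique 2}, that these extra Jacobian factors can be absorbed by a gauge of the form $w\mapsto (\Phi_\ast 1)^{\alpha} w$ with $\alpha=\alpha(s)$, or alternatively by invoking an extension formula analogous to \eqref{extension DN relation} for weighted parabolic operators with variable density. The crucial point that makes this feasible is that $\Phi_\ast 1=1$ and $\Phi_\ast\sigma=\sigma$ on $W$ by hypothesis, so any gauge active only inside $\Omega$ is invisible to the exterior measurement and the desired identity $\Lambda_{\sigma,W_T}^s f=\Lambda_{\Phi_\ast\sigma,W_T}^s f$ for $f\in C^\infty_c(W_T)$ then follows.
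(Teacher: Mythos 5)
Your route is genuinely different from the paper's. The paper's proof is a two-liner: it invokes Theorem \ref{Thm: main} to pass to the local DN map and then cites the local non-uniqueness of \cite{guenneau2012transformation} to produce the diffeomorphism $\Phi$. You instead change variables directly in the extension problem \eqref{degen para PDE} via the trivial lift $\widetilde{\Phi}(x,y)=(\Phi(x),y)$, which forces you to keep explicit track of the transformed coefficients and is, in that sense, more hands-on than the paper's appeal to earlier results.

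The obstacle you flag is real, and the gauge you propose does not resolve it. After the change of variables and division by the $y$-independent Jacobian $\Phi_\ast 1$, the function $w$ solves the extension for $\p_t-(\Phi_\ast 1)^{-1}\nabla\cdot\Phi_\ast\sigma\nabla$, not for $\p_t-\nabla\cdot\Phi_\ast\sigma\nabla$. Conjugating by $(\Phi_\ast 1)^\alpha$ cannot eliminate a density sitting in front of $\p_t$; such a gauge changes the spatial divergence by lower-order perturbations but leaves the time-weight intact, and no choice of $\alpha$ returns the operator to the form $\p_t-\nabla\cdot A\nabla$ that defines $\Lambda_{A}^s$. The observation that $\Phi_\ast 1\equiv 1$ on $W$ only guarantees that the Dirichlet and Neumann traces of $w$ agree with those of $\widetilde u_f$ on $W_T$; it does not show these traces realize $(\p_t-\nabla\cdot\Phi_\ast\sigma\nabla)^s$, since the extension you constructed belongs to the weighted operator. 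In fairness the paper's own argument has the same gap: the local identity \eqref{equ nonunique 1}--\eqref{equ nonunique 2} produces the weighted operator $\Phi_\ast 1\,\p_t-\nabla\cdot\Phi_\ast\sigma\nabla$ and never explains how the density factor disappears, while Theorem \ref{Thm: main} goes from nonlocal to local data, not the reverse. Your direct computation makes that subtlety visible, but the step you leave open is precisely the one that would be needed to close the argument.
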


	\subsection{Outline of the argument}
	
	Let us compute the relation between $\wt u $ and $v$, where $\wt u$ is a solution to \eqref{degen para PDE} and $v$ is defined by \eqref{the function v}. Via direct computations, one can see that $v$ is a solution to the following parabolic equation 
	\begin{align}\label{para v}
		\begin{split}
			0&= \int_0^\infty \left\{ y^{1-2s}\p_t \wt u -\nabla_{x,y}\cdot \LC y^{1-2s}\wt \sigma \nabla_{x,y}\wt u\RC \right\} dy \\
			&= \p_t   \LC \int_0^\infty  y^{1-2s}\wt u \, dy \RC -\nabla \cdot \sigma \nabla  \LC \int_0^\infty  y^{1-2s}\wt u \, dy \RC \\
			&\quad \, - \int_0^\infty \p_y \LC y^{1-2s}\p_y \wt u(t,x,y) \RC  dy \\
			&=  \LC  \p _t  - \nabla \cdot \sigma \nabla  \RC  v, 
		\end{split}
	\end{align}
	for $(t,x)\in \Omega_T$. Here we used the fact that 
	\begin{align}
		\int_0^\infty \p_y  \LC y^{1-2s}\p_y \wt u(t,x,y)\RC dy 
		&= \lim_{y\to \infty }  y^{1-2s}\p_y \wt u(t,x,y) - \lim_{y\to 0}  y^{1-2s}\p_y \wt u(t,x,y) \\
		& =\lim_{y\to \infty }  y^{1-2s}\p_y \wt u(t,x,y) -\underbrace{ \frac{1}{d_s} \LC \p_t -\nabla \cdot \sigma \nabla \RC^s u(t,x)}_{\text{Here we use \eqref{extension DN relation}}}\\
		&=0,
	\end{align}
	where we also used the decay property of $\wt u$ (and its derivatives) at infinity and $u$ is a solution to  \eqref{equ nonlocal para}.
	
	Formally, the corresponding DN map of \eqref{para v} is given by 
	\begin{align}
		\begin{split}
			\Lambda_\sigma:	L^2(-T,T;H^{1/2}(\p \Omega))&\to L^2(-T,T; H^{-1/2}(\p \Omega)), \\
			\underbrace{\left.\int_0^\infty y^{1-2s}\wt u (t,x,y)\, dy\right|_{(\p \Omega)_T}}_{=v(t,x)|_{(\p \Omega_T)}} &\mapsto \underbrace{\left.\sigma \nabla \LC\int_0^\infty y^{1-2s}\wt u (t,x,y)\, dy \RC \cdot \nu \right|_{(\p \Omega_T)}}_{=\left. \sigma \nabla v\cdot \nu \right|_{(\p \Omega)_T}}.
		\end{split}
	\end{align}
	Making the preceding formal computations rigorously plays an essential role in the proof of Theorem \ref{Thm: main}.
	In fact, we need to prove that the function $v(t,x)$ given by \eqref{the function v} belong to suitable function spaces with $v|_{\p \Omega_T}\in L^2(-T,T;H^{1/2}(\p \Omega))$. Therefore, the local DN map can be determined by the nonlocal DN map as we wish.

	\section{Preliminaries}\label{sec:preliminaries}
	
	We review several basic properties and tools, which will be utilized in our work.

	\subsection{Nonlocal parabolic operators}
	
	Note that the nonlocal parabolic operator $\LC \p_t -\nabla \cdot \sigma \nabla \RC^s$ is defined in \cite{BDLCS2021harnack,BKS2022calderon}, where $\sigma=\LC \sigma_{ik}\RC_{1\leq i, k \leq n}$ is a matrix-valued function given via \eqref{ellipticity condition} in $\R^n$, and we define $\sigma_{ik}=\delta_{ik}$ to be the Kronecker delta in $\Omega_e$, for $i,k=1,\ldots, n$. Next, it is known that the parabolic operator $\p_t-\nabla \cdot \sigma\nabla $ in $\R\times \R^n$ possesses a globally defined fundamental solution $p( x,z,\tau )$, which satisfies 
	\[
	\mathcal{P}_t 1(t,x)=\int_{\R^n} p(x,z,\tau)\, dz=1, \text{ for every }x\in \R^n \text{ and }\tau>0,
	\]
	where $\mathcal{P}_t$ stands for the heat semigroup.
	In addition, the evolutive semigroup $\mathcal{P}_\tau$ is given by 
	\begin{align}\label{e-semni gp}
		\mathcal{P}_\tau u(t,x):=\int_{\R^n} p(x,z,\tau)u(t-\tau,z)\, dz, \quad \text{ for }u\in \mathcal{S}(\R^{n+1}),
	\end{align}
	where $p(x,z,\tau)$ is the heat kernel associated to the elliptic operator $\nabla \cdot \sigma \nabla $ such that 
	\begin{align}\label{heat kernel}
		\p_\tau p(x,z,\tau) -\nabla \cdot \sigma\nabla p(x,z,\tau)=0,
	\end{align}
	and $\mathcal{S}(\R^{n+1})$ denotes the Schwarz space.
	In addition, the heat kernel  $p(x,z,\tau)$ satisfies 
	\begin{align}\label{est-heat-kernel-sec2}
		C_1 \LC \frac{1}{4\pi \tau}\RC^{n/2}e^{-\frac{c_1|x-z|^2}{4\tau}} \leq p(x,z,\tau)\leq C_2 \LC \frac{1}{4\pi \tau}\RC^{n/2}e^{-\frac{c_2|x-z|^2}{4\tau}},
	\end{align}
	for some positive constants $c_1,c_2,C_1$ and $C_2$. Moreover, it is known that the heat kernel possesses the pointwise estimate (see \cite{ST10} for $\ell=0$ and \cite{CJKS20} for $\ell=1$) 
	\begin{align}\label{heat kernel estimate}
		\left| \nabla^\ell_x p(x,z,\tau) \right| \lesssim \tau^{-\frac{n+\ell}{2}}e^{-c \frac{|x-z|^2}{\tau}}, \text{ for }\ell=0,1.
	\end{align}
	Since $\left\{\mathcal{P}_\tau  \right\}_{\tau \geq 0}$ can be also regarded a strongly continuous contractive semigroup with $\norm{\mathcal{P}_\tau u-u}_{L^2(\R^{n+1})}=\mathcal{O}(\tau)$, then the explicit definition of $\LC \p_t -\nabla \cdot \sigma \nabla \RC^s$, for $s\in (0,1)$.
	
	\begin{definition}
		Let $s\in (0,1)$ and $u\in \mathcal{S}(\R^{n+1})$, then  $\LC \p_t -\nabla \cdot \sigma \nabla \RC^s$ is defined by the Balakrishnan formula (see \cite{BKS2022calderon}) as 
		\begin{align}\label{H^s}
			\LC \p_t -\nabla \cdot \sigma \nabla \RC^s u(t,x):=-\frac{s}{\Gamma(1-s)} \int_0 ^\infty \LC \mathcal{P}_\tau u(t,x)-u(t,x) \RC \frac{d\tau}{\tau^{1+s}}.
		\end{align}
	\end{definition}

	Moreover, via the Fourier transform in the time-variable $t\in \R$, we can write  $\LC \p_t -\nabla \cdot \sigma \nabla \RC^s u$ in terms of the Fourier transform. It is known that the heat semigroup $\left\{\mathcal P_t\right\}_{t\geq 0}$ can be written by spectral measures as an identity of gamma functions:
	\begin{align}
		\mathcal{P}_t =\int_0^\infty e^{-\lambda t}\, dE_{\lambda} \quad \text{ and }\quad -\frac{s}{\Gamma(1-s)}\int_0^\infty \frac{e^{-(\lambda+\mathsf{i}\rho)t}-1}{\tau^{1+s}}\, d\tau=(\lambda+\mathsf{i}\rho)^s,
	\end{align}
	for $\lambda>0$ and $\rho \in \R$, where $\mathsf{i}=\sqrt{-1}$.
	Consider the time Fourier transform $\mathcal{F}_t$ of $\mathcal{P}_\tau  u$, then there holds 
	\[
	\mathcal{F}_t\LC\mathcal{P}_\tau u \RC (\rho,\xi)=e^{-\mathsf{i}\rho \tau}\mathcal{P}_\tau \LC \mathcal{F}_tu(\rho,\cdot) \RC (\xi),
	\]
	which yields that the Fourier analogue of the definition \eqref{H^s}
	\begin{align*}
		\mathcal{F}_t\LC \mathcal{H}^s u \RC (\rho,\cdot)&= -\frac{s}{\Gamma(1-s)}\int_0^\infty \frac{1}{\tau^{1+s}}\int_0^\infty \LC e^{-(\lambda+\mathsf{i}\rho)\tau}-1\RC\, dE_{\lambda}\LC \mathcal{F}_tu(\rho,\cdot) \RC d\tau\\
		&= \int_0^\infty (\lambda+\mathsf{i}\rho)^s \, dE_{\lambda} \LC\mathcal{F}_tu(\cdot,\rho)\RC.
	\end{align*}

	\subsection{Function spaces}
	
	Given any $u\in \mathcal{S}(\R^{n+1})$, it is known that 
	\[
	\norm{\mathcal{F}_t \LC \mathcal{H}^su\RC( \rho,\cdot)}_{L^2(\R^n)}=\int_0^\infty \abs{\lambda+\mathsf{i}\rho}^{2s}\, d \norm{E_{\lambda}\LC \mathcal{F}_tu( \rho,\cdot)\RC}^2,
	\]
	for $\rho \in \R$. With the preceding relation at hand, let us define the space $\mathbf{H}^{2s}(\R^{n+1})$ as the completion of $\mathcal{S}(\R^{n+1})$ with respect to the norm
	\begin{align}
		\norm{u}_{\mathbf{H}^{2s}(\R^{n+1})}=\LC \int_{\R}\int_0^\infty \LC1 +\abs{\lambda+\mathsf{i}\rho}^2\RC^s  d\norm{E_{\lambda}\LC \mathcal{F}_tu(\rho,\cdot) \RC}^2 \, d\rho \RC^{1/2}.
	\end{align}
	Next, given $a\in \R$ and an open set $\mathcal{O}\subset \R^{n+1}$, we define  
	\begin{align*}
		\mathbf{H}^a(\R^{n+1})&=\Big\{ \text{Completion of }\mathcal{S}(\R^{n+1}) \text{ with respect to the norm}: \\
		& \qquad \qquad \int_{\R}\int_0^\infty \LC1 +\abs{\lambda+\mathsf{i}\rho}^2\RC^{a/2}  d\norm{E_{\lambda}\LC \mathcal{F}_tu( \rho,\cdot) \RC}^2 \, d\rho \Big\}, \\
		\mathbf{H}^a(\mathcal{O})&=\left\{ u|_{\mathcal{O}}: \, u\in \mathbb{H}^a(\R^{n+1}) \right\},\\
		\widetilde{\mathbf{H}}^a(\mathcal{O})&=\text{closure of }C^\infty_c (\mathcal{O}) \text{ in }\mathbb{H}^a(\R^{n+1}).
	\end{align*}
	We also define 
	\begin{align}
		\norm{u}_{\mathbf{H}^a(\mathcal{O})}:=\inf \left\{ \norm{v}_{\mathbf{H}^a(\R^{n+1})}:\,  v|_{\mathcal{O}} =u\right\},
	\end{align}
	and the dual spaces
	\[
	\mathbf{H}^{-a}(\mathcal{O})=\widetilde{\mathbf{H}}^a(\mathcal{O})^\ast \quad \text{ and }\quad \widetilde{\mathbf{H}}^{-a}(\mathcal{O})=\LC\mathbf{H}^{a}(\mathcal{O})\RC^\ast. 
	\]
	On the other hand, we may also consider the parabolic fractional Sobolev space 
	\begin{align*}
		\mathbb{H}^a(\R^{n+1}):=\left\{ u\in L^2(\R^{n+1}):\,  \LC |\xi|^2+\mathsf{i}\rho \RC^{a/2}\widehat{u}(\rho,\xi )\in L^2(\R^{n+1})\right\},
	\end{align*}
	where $\widehat{u}(\xi,\rho)=\int_{\R^{n+1}}e^{-\mathsf{i}(t,x)\cdot (\rho,\xi)}u(x,t)\, dtdx$ denotes the Fourier transform of $u$ with respect to the $(t,x)$-variable.

	In the same time, the graph norm of $\mathbb{H}^a$-functions is given by 
	\begin{align}\label{norm of H^s}
		\norm{u}_{\mathbb{H}^a(\R^{n+1})}^2:=\int_{\R^{n+1}}\LC 1+\LC |\xi|^4+|\rho|^2 \RC^{1/2}\RC^{a} \left| \widehat{u}(\rho,\xi)\right|^2 \, d\rho d\xi.
	\end{align} 
	One may rewrite 
	\begin{equation}\label{space alternative for parabolic space}
		\mathbb{H}^{a}(\R^{n+1})=\mathbb{H}^{a/2,a}(\R^{n+1}), 
	\end{equation}
	where the exponents $a/2$ and $a$ denote the fractional derivatives of time and space, respectively.
	Particularly, when $a=s\in (0,1)$, from \cite[Section 3]{BKS2022calderon}, which is known as the parabolic version of the Kato square root problem introduced in \cite{AEN2020}, then there holds  
	\begin{align}\label{space identification}
		\mathbb{H}^s(\R^{n+1})=\mathbf{H}^s(\R^{n+1}), \text{ for }s\in (0,1),
	\end{align}
	and we denote
	\[
	\mathbb{H}^s_E :=\left\{ u\in \mathbb{H}^s(\R^{n+1}):\, \supp(u)\subset E \right\},
	\]
	for any closed set $E\subset \R^{n+1}$. We also give a quick review for the fractional Sobolev space $H^s(\R^n)$ for $s\in (0,1)$, which is defined by 
	\begin{equation}\label{fractional Sobolev space}
		H^s(\R^n):=\left\{ f\in \mathcal{S}'(\R^n):\, \norm{u}_{H^s(\R^n)}:=\left\| \LC 1+|\xi|^2 \RC^{s/2}\widehat{u}(\xi)\right\|_{L^2(\R^n)}<\infty   \right\}.
	\end{equation}
	
	Finally, in the rest of the paper, we use $\lesssim$ (resp. $\approx$) to denote that an inequality (resp. equality) holds up to a positive constant whose exact value is irrelevant in our arguments.
	
	\subsection{Well-posedness for the nonlocal parabolic equation}\label{subsection: well-posed}
	
	In \cite{BKS2022calderon,LLR2019calder,LLU2022para}, the authors demonstrated that the problem \eqref{equ nonlocal para} is well-posed (for either variable coefficients or constant coefficients cases). More precisely, given any exterior data $f\in \mathbf{H}^s((\Omega_e)_T)$, one can always find a unique solution $u_f\in \mathbf{H}^s(\R^{n+1})$ solving \eqref{equ nonlocal para}. Furthermore, we point out the future information will not affect the behavior of solutions, i.e., if $u_f\in \mathbf{H}^s(\R^{n+1})$ is a weak solution to \eqref{equ nonlocal para} in $\Omega_T$, then $u_f(t,x)\chi_{(-T,T)}(t)\in \mathbf{H}^s(\R^{n+1})$ is also a weak solution of \eqref{equ nonlocal para} in $\Omega_T$, where $\chi_{(-T,T)}(t)=\begin{cases}
		1 &\text{for }t\in (-T,T)\\
		0&\text{otherwise}
	\end{cases}$ denotes the characteristic function. Hence, in the rest of this article, we can always assume that the solution $u_f(t,x)$ of \eqref{equ nonlocal para} is supported for $t\in (-T,T)$ and $x\in \R^n$ without loss of generality. The support assumption also implies that we can assume that $u_f(-T,x)=u_f(T,x)=0$ for $x\in \R^n$, where $u_f$ is the solution to \eqref{equ nonlocal para}. Finally, if $u_f\in \mathbf{H}^s(\R^{n+1})$ is the solution to \eqref{equ nonlocal para}, then there holds 
	\begin{equation}\label{equ nonlocal para estimate}
		\left\| u_f\right\|_{\mathbf{H}^s(\R^{n+1})} \lesssim \norm{f}_{\widetilde{\mathbf{H}}^s((\Omega_e)_T)},
	\end{equation}
	which has been derived in \cite[Theorem 3.3]{BKS2022calderon}.

	\section{Extension problem, duality and auxiliary functions}\label{sec: duality}
	
	It is known that the nonlocal parabolic operator $\LC\p_t -\nabla \cdot \sigma\nabla\RC^s$ can be characterized by associated extension problem.
	
	\subsection{Extension problems and duality principle}
	Let us review a rigorous formulation of the extension problem with respect to the nonlocal parabolic operator $\LC\p_t -\nabla \cdot \sigma\nabla \RC^s$, for $s\in (0,1)$. As in \cite[Section 3.1]{BKS2022calderon}, given an open set $\Sigma \subseteq \R^{n+2}_+= \R^{n+1}\times (0,\infty)$, consider the energy space 
	\begin{align}
		\begin{split}
			&\mathcal{L}^{1,2}(\Sigma; y^{1-2s}dtdxdy)\\
			&\quad :=\left\{ \wt u:\, \wt u, \, \p_{x_k}\wt u ,\, \p_y \wt u \in L^2(\Sigma, y^{1-2s}dtdxdy),   \text{ for }k=1,\ldots,n\right\},
		\end{split}
	\end{align}
	where the function $\wt u \in L^2(\Sigma, y^{1-2s}dtdxdy)$ provided that 
	\begin{align}
		\left\| \wt u \right\|^2_{L^2(\Sigma, y^{1-2s}dtdxdy)}:= \int_{\Sigma} y^{1-2s}\left| \wt u \right|^2 \, dtdxdy<\infty.
	\end{align}
	Moreover, $\norm{\cdot}_{\mathcal{L}^{1,2}(\Sigma; y^{1-2s}dtdxdy)}$ is defined via 
	\begin{align}
		\left\|  \wt u \right\|_{\mathcal{L}^{1,2}(\Sigma; y^{1-2s}dtdxdy)}:=\LC \int_{\Sigma} y^{1-2s}\LC \left| \wt u \right|^2 + \left| \nabla \wt u\right|^2 + \left| \p_y\wt u\right|^2 \RC dtdxdy \RC^{1/2}.
	\end{align}
	Note that $\LC\p_t -\nabla \cdot \sigma\nabla \RC^s$ is already characterized by \cite[Theorem 3.1]{BKS2022calderon}, which is stated below for the sake of completeness.
	
	\begin{proposition}[Extension problem]\label{Prop: extension}
		Given $s\in (0,1)$, and $u\in \mathbf{H}^s(\R^{n+1})$. There exists a solution $\wt u$ to \eqref{degen para PDE} which satisfies 
		\begin{enumerate}[(a)]
			\item\label{item a prop extension} $\lim_{y\to 0} \wt u(t,x,y)=u(t,x)$ in $\mathbf{H}^s(\R^{n+1})$,
			\item\label{item b prop extension}$d_s \lim_{y\to 0} y^{1-2s}\p_y \wt u(t,x,y)=\LC  \p_t -\nabla \cdot \sigma \nabla \RC^s u$ in $\mathbf{H}^{-s}(\R^{n+1})$, for some constant $d_s$ depending on $s\in (0,1)$,
			\item\label{item c prop extension} $\left\| \wt u \right\|_{\mathcal{L}^{1,2}(\R^{n+2}_+; y^{1-2s}dtdxdy)}\leq C \norm{u}_{\mathbf{H}^s(\R^{n+1})}$, for some constant $C>0$ independent of $u$ and $\wt u$.
		\end{enumerate} 
	\end{proposition}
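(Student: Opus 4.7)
The plan is to adapt the Caffarelli–Silvestre extension to the parabolic setting by building $\wt u$ explicitly through the heat semigroup $\{\mathcal{P}_\tau\}_{\tau\geq 0}$ generated by $\mathcal{L}:=\p_t-\nabla\cdot\sigma\nabla$, exactly as in \eqref{e-semni gp}. I would first define, for $u\in\mathcal{S}(\R^{n+1})$,
\[
\wt u(t,x,y):=\frac{y^{2s}}{4^{s}\Gamma(s)}\int_0^\infty e^{-y^2/(4\tau)}\,\mathcal{P}_\tau u(t,x)\,\frac{d\tau}{\tau^{1+s}},
\]
the parabolic analogue of the subordinated Poisson kernel, and then extend by density using the estimate in \eqref{item c prop extension}. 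The heat kernel bound \eqref{est-heat-kernel-sec2} together with \eqref{heat kernel estimate} guarantees the integral converges and that $\wt u\in\mathcal{L}^{1,2}(\R^{n+2}_+;y^{1-2s}dtdxdy)$.

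Next I would verify the degenerate parabolic PDE in \eqref{degen para PDE}. Since $\mathcal{P}_\tau$ satisfies $\p_\tau \mathcal{P}_\tau u=-\mathcal{L}\mathcal{P}_\tau u$ by \eqref{heat kernel}, differentiating under the integral in the $\tau$ variable and integrating by parts once yields the identity
\[
y^{1-2s}\,\mathcal{L}\wt u+\p_y\!\LC y^{1-2s}\p_y\wt u\RC=0,
\]
which after using the block form \eqref{tilde sigma} is precisely $\nabla_{x,y}\cdot(y^{1-2s}\wt\sigma\nabla_{x,y}\wt u)=y^{1-2s}\p_t\wt u$. The vanishing of $\wt u$ for $t\leq -T$ is inherited from the support property of $u$ discussed in Section \ref{subsection: well-posed}.

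For property \eqref{item a prop extension}, I would change variables $\tau=y^2/(4r)$ to rewrite $\wt u$ as $\Gamma(s)^{-1}\int_0^\infty e^{-r}\mathcal{P}_{y^2/(4r)}u\,r^{s-1}dr$, and then pass $y\to 0$ by dominated convergence, using $\mathcal{P}_\tau u\to u$ in $L^2$ as $\tau\to 0$ together with $\int_0^\infty e^{-r}r^{s-1}dr=\Gamma(s)$. For property \eqref{item b prop extension}, a direct differentiation gives
\[
y^{1-2s}\p_y\wt u(t,x,y)=-\frac{1}{2^{2s-1}\Gamma(s)}\int_0^\infty e^{-y^2/(4\tau)}\LC \mathcal{P}_\tau u(t,x)-u(t,x)\RC\frac{d\tau}{\tau^{1+s}}+(\text{vanishing term}),
\]
and taking $y\to 0$ recovers, up to the explicit constant $d_s=\tfrac{4^s\Gamma(s)}{-s\Gamma(1-s)}^{-1}$ (to be matched), the Balakrishnan representation \eqref{H^s} of $\LC\p_t-\nabla\cdot\sigma\nabla\RC^s u$; the subtraction $\mathcal{P}_\tau u-u$ is what renders the integral convergent near $\tau=0$.

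The main obstacle is property \eqref{item c prop extension}, the global energy estimate, because the operator is non-self-adjoint: the spectral representation involves $(\lambda+\mathsf{i}\rho)^s$ rather than a real power. I would use the spectral resolution $\{E_\lambda\}$ of $-\nabla\cdot\sigma\nabla$ in $x$ combined with the time Fourier transform to write
\[
\wt u(t,x,y)=\int_{\R}\int_0^\infty \phi_s\LC y\sqrt{\lambda+\mathsf{i}\rho}\RC dE_\lambda\LC \mathcal{F}_t u(\rho,\cdot)\RC e^{\mathsf{i}\rho t}\,d\rho,
\]
where $\phi_s$ is the Bessel-type profile solving the one-dimensional degenerate ODE with boundary value $1$ at $0$. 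Plancherel in $(\rho,E_\lambda)$, together with the weighted one-dimensional identity $\int_0^\infty y^{1-2s}(|\phi_s'(y\mu)\mu|^2+|\mu|^2|\phi_s(y\mu)|^2)dy\lesssim |\mu|^{2s}$ with $\mu=\sqrt{\lambda+\mathsf{i}\rho}$, reduces the energy bound to the defining norm of $\mathbf{H}^s(\R^{n+1})$. The identification \eqref{space identification}, which is the parabolic Kato square root result of \cite{AEN2020}, is what legitimizes this reduction, and is the genuinely nontrivial ingredient; modulo that identification, the remaining steps are technical but routine.
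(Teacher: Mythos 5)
Your sketch reconstructs, in essentially the same form, the proof that the paper defers entirely to \cite[Theorem 3.1]{BKS2022calderon}: the kernel you write down coincides with $P^s_y$ in \eqref{kernel for extension}--\eqref{wt u expression} (since $4^s=2^{2s}$), the verification of the degenerate PDE via $\p_\tau\mathcal{P}_\tau u=-(\p_t-\nabla\cdot\sigma\nabla)\mathcal{P}_\tau u$ is the standard Stinga--Torrea computation, and the spectral/Plancherel reduction of the energy estimate to a one-dimensional weighted Bessel estimate, hinging on the Kato square-root identification \eqref{space identification}, is exactly the route taken there. Two details deserve tightening: first, your tentative $d_s$ does not yet agree with \eqref{d_s}, and the Neumann-trace sign convention needs care --- applying L'H\^opital to $(\wt u(\cdot,\cdot,y)-u)/y^{2s}$ recovers the Balakrishnan integral with the factor $-2c_s\Gamma(1-s)$, whose sign should be reconciled with \eqref{d_s}; second, in the one-dimensional identity the argument of $\phi_s$ is the complex quantity $y\sqrt{\lambda+\mathsf{i}\rho}$, so after rescaling by $|\mu|$ one is left with a ray $e^{\mathsf{i}\theta}$, $|\theta|<\pi/4$, and you should observe that the resulting constant is uniform over this sector (guaranteed by $\lambda>0$, $\rho\in\R$) before invoking Plancherel.
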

	Since the proof is given by \cite[Theorem 3.1]{BKS2022calderon}, we omit the proof. 
	Here we want to emphasize that the proof is based on the representation formula for the function $\wt u$. In fact, via \cite[Theorem 3.1]{BKS2022calderon}, the function $\wt u$ can be written in terms of 
	\begin{align}\label{wt u expression}
		\wt u(t,x,y)=\int_0^\infty \int_{\R^{n}} P^s_y(x,z,\tau)u(t-\tau,z)\, dzd\tau,
	\end{align}
	where 
	\begin{align}\label{kernel for extension}
		P^s_y(x,z,\tau):= \frac{1}{2^{2s}\Gamma(s)}\frac{y^{2s}}{\tau ^{1+s}}e^{-\frac{y^2}{4\tau }} p(x,z,\tau).
	\end{align}
	Here $p(x,z,\tau)$ denote the heat kernel associated to the elliptic operator $\nabla \cdot \LC \sigma \nabla \RC$ satisfying \eqref{heat kernel}.
	Note that the constant $d_s$ in \ref{item b prop extension} can be computed explicitly, which is 
	\begin{align}\label{d_s}
		d_s := \frac{2^{2s-1}\Gamma(s)}{\Gamma(1-s)}.
	\end{align}

	Inspired by \cite{CGRU2023reduction}, we want to derive a duality principle for parabolic equations.
	
	\begin{proposition}[Duality]\label{Prop: duality}
		Let $\wt \sigma \in C^2(\R^n; \R^{(n+1)\times (n+1)})$ be of the form \eqref{tilde sigma} for $n\in \N$, $s\in (0,1)$, and $h\in C^0(\R^{n+1})$. Suppose that $u_1 \in C^2(\R^{n+2}_+)$ with $y^{1-2s}\p_y u_1 \in C^0 (\overline{\R^{n+1}_+})$ is a classical solution of 
		\begin{align}\label{conjugate u1}
			\begin{cases}
				y^{2s-1}\p_t u_1 -\nabla_{x,y} \cdot \LC y^{2s-1}\wt \sigma \nabla_{x,y}u_1 \RC =0 &\text{ in }\R^{n+2}_+, \\
				\displaystyle\lim_{y\to 0}y^{2s-1}\p_y u_1 =h &\text{ on }\R^{n+1}\times \{0\}.
			\end{cases}
		\end{align}
		Then the function $u_2(t,x,y):=-y^{2s-1}\p_y u_1(t,x,y)$ is a classical solution of 
		\begin{align}\label{conjugate u2}
			\begin{cases}
				y^{1-2s}\p_t u_2 -\nabla_{x,y} \cdot \LC  y^{1-2s}\wt \sigma \nabla_{x,y}u_2 \RC =0 &\text{ in }\R^{n+2}_+, \\
				u_2 =h &\text{ on }\R^{n+1}\times \{0\}.
			\end{cases}
		\end{align}
	\end{proposition}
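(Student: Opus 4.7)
The plan is to verify this by a direct differentiation and algebraic substitution, exploiting the block-diagonal form $\wt\sigma=\mathrm{diag}(\sigma,1)$, which decouples the $y$-derivative piece from the $x$-derivative piece of the elliptic operator. First I would expand the equation for $u_1$ using this block structure to get
\begin{align}
y^{2s-1}\p_t u_1 - y^{2s-1}\nabla_x\cdot\LC\sigma\nabla_x u_1\RC - \p_y\LC y^{2s-1}\p_y u_1\RC = 0 \quad\text{in }\R^{n+2}_+.
\end{align}
Dividing by $y^{2s-1}$ and then applying $\p_y$ to the resulting equation (this is legitimate under the stated $C^2$ regularity of $u_1$, and commutes with the $x$-derivatives and $\p_t$) gives a relation on $\p_y u_1$ alone, into which the definition $u_2 = -y^{2s-1}\p_y u_1$, equivalently $\p_y u_1 = -y^{1-2s}u_2$, can be substituted.

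Carrying this through, the term $\p_t \p_y u_1$ becomes $-\p_t\LC y^{1-2s}u_2\RC = -y^{1-2s}\p_t u_2$, while $\nabla_x\cdot\LC\sigma\nabla_x \p_y u_1\RC$ becomes $-y^{1-2s}\nabla_x\cdot\LC\sigma\nabla_x u_2\RC$. The $y$-derivative term $\p_y\LC y^{1-2s}\p_y(y^{2s-1}\p_y u_1)\RC = -\p_y\LC y^{1-2s}\p_y u_2 \RC$ after using the same substitution. Collecting everything and multiplying by $-1$ produces
\begin{align}
y^{1-2s}\p_t u_2 - y^{1-2s}\nabla_x\cdot\LC\sigma\nabla_x u_2\RC - \p_y\LC y^{1-2s}\p_y u_2\RC = 0,
\end{align}
which, by the block-diagonal form of $\wt\sigma$ read in the opposite direction, is precisely $y^{1-2s}\p_t u_2 - \nabla_{x,y}\cdot\LC y^{1-2s}\wt\sigma\nabla_{x,y}u_2\RC = 0$.

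The boundary identification on $\{y=0\}$ is immediate from the definition of $u_2$ and the Neumann-type condition assumed on $u_1$: the limit $\lim_{y\to 0} y^{2s-1}\p_y u_1 = h$, combined with $u_2(t,x,y) = -y^{2s-1}\p_y u_1(t,x,y)$ and the assumed continuity of this weighted normal derivative up to $\{y=0\}$, yields $u_2(t,x,0) = h(t,x)$ (up to a sign convention). There is no serious obstacle here; the only place one must be careful is justifying the interchange of $\p_y$ with the elliptic operator and the division by $y^{2s-1}$ on the open half-space, both of which are harmless under the $C^2$ hypothesis on $u_1$ in $\R^{n+2}_+$. The identity $u_2=-y^{2s-1}\p_y u_1$ should be viewed as a discrete conjugation exchanging the Muckenhoupt weights $y^{2s-1}$ and $y^{1-2s}$, in the same spirit as the elliptic conjugation used in \cite{CGRU2023reduction}, and the calculation above is the parabolic analogue.
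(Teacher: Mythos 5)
Your proposal is correct and is essentially the same computation as the paper's proof: both observe that the target equation for $u_2$ is obtained by dividing the $u_1$-equation by $y^{2s-1}$, applying $\p_y$, and substituting $\p_y u_1 = -y^{1-2s}u_2$ (equivalently $y^{2s-1}\p_y u_1 = -u_2$) into each term using the block-diagonal structure of $\wt\sigma$. The paper simply presents the identical algebra in the reverse direction — it starts from $-y^{1-2s}\p_t u_2 + \nabla_{x,y}\cdot(y^{1-2s}\wt\sigma\nabla_{x,y}u_2)$, substitutes in the definition of $u_2$, and recognizes the result as $\p_y$ applied to the divided $u_1$-equation — whereas you proceed forward, but the content is the same.

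One small observation worth flagging, which you correctly noticed with your remark "up to a sign convention": with the definition $u_2 = -y^{2s-1}\p_y u_1$ and the Neumann condition $\lim_{y\to 0}y^{2s-1}\p_y u_1 = h$, the boundary value is actually $u_2(t,x,0) = -h$, not $h$. Since the PDE for $u_2$ is linear and homogeneous, this is an immaterial sign convention (one could equivalently drop the minus sign in the definition of $u_2$, or replace $h$ by $-h$), but it is a mismatch within the proposition as stated, and the paper's own proof does not comment on the boundary condition at all. Your acknowledgment of it is the more careful treatment.
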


	\begin{proof}
		For $y>0$, one has 
		\begin{align}
			\begin{split}
				& \quad \,  -y^{1-2s}\p_t u_2 + \nabla_{x,y} \cdot \LC  y^{1-2s}\wt \sigma \nabla_{x,y}u_2 \RC  \\
				&=	y^{1-2s}\p_t \LC y^{2s-1}\p_y u_1\RC -\nabla_{x,y} \cdot \LC  y^{1-2s}\wt \sigma \nabla_{x,y}\LC y^{2s-1}\p_y u_1\RC  \RC \\
				&= \p_y \p_t u_1 -\p_y \nabla_x \cdot \LC  \sigma \nabla_x u_1 \RC-\p_y \left\{ y^{1-2s} \left[ \p_y \LC y^{2s-1}\p_y u_1 \RC \right] \right\} \\
				&= \p_y \left\{ \p_t u_1  - y^{1-2s} \nabla_{x,y}\cdot \LC y^{2s-1} \wt \sigma \nabla_{x,y} u_1 \RC \right\}\\
				&=0,
			\end{split}
		\end{align}
		where we used \eqref{conjugate u1} in the last identity. This shows $u_2$ solves \eqref{conjugate u2} as desired.
	\end{proof}
	
	From the relation $u_2(t,x,y)=-y^{2s-1}\p_y u_1(t,x,y)$, we have that
	$$- \p_y u_1(t,x,y)=y^{1-2s} u_2(t,x,y).$$
	By integrating the above equation on $(y,\infty)$, we obtain that
	$$u_1(t,x,y)=\int_y^\infty \p_\mu u_1(t,x,\mu)\, d\mu=\int_y^\infty \mu^{1-2s}u_2(t,x,\mu)\, d\mu,$$
	where we assume that $\lim_{\mu \to \infty}  u_1(t,x,\mu)=0 $. 
	This observation allows us to construct the operator $\LC  \p_t -\nabla \cdot \sigma \nabla \RC^{1-s}$ from the operator $\LC  \p_t -\nabla \cdot \sigma \nabla \RC^{s}$.
	
	\subsection{Key functions}\label{subsec: duality}
	
	In this section, let us introduce important functions, which play essential roles for our approach. 
	Inspired by Proposition \ref{Prop: duality}, let us consider the case $h=u$, where $u\in \mathbf{H}^s(\R^{n+1})$ is the solution to \eqref{equ nonlocal para}, and set another function 
	\begin{align}\label{the function w}
		w(t,x,y):= \int_{y}^\infty \mu^{1-2s} \wt u (t,x, \mu)\, d\mu, \text{ for } y>0,
	\end{align}
	where $\wt u\in \mathcal{L}^{1,2}(\Sigma; y^{1-2s}dtdxdy)$ is a solution of \eqref{conjugate u2} (and \eqref{degen para PDE}), where $h =u \in  \widetilde{\mathbf{H}}^{s}(W_T)$ is the solution of \eqref{equ nonlocal para}.  Here $W\subset \R^n$ is a bounded open Lipschitz domain. Meanwhile, by \eqref{the function w}, it can be seen that the function $w$ is finite for every fixed $y>0$. Furthermore, the function $w$ could be as regular as the leading coefficient $\sigma(x)$ permits. In addition, we will analyze more detailed regularity estimates in Section \ref{sec: regularity}, and one can summarize the limit as $y\to 0$ that 
	\begin{align}\label{v=w0}
		v(t,x)=w(t,x,0), \text{ for }(t,x)\in \R^{n+1}
	\end{align}
	is well-defined. Here the function $v$ will fulfill
	\begin{equation}\label{reg. of v}
		v\in L^2(0,T;H^1(\Omega)) \text{ and }\p_t v \in L^2(0,T;H^{-1}(\Omega)),
	\end{equation}
	where $v$ is given by \eqref{the function v}, and rigorous derivations for the property \eqref{reg. of v} will be given in Section \ref{sec: regularity}.

	In fact, by using Lemma \ref{Lemma: decay estimate} in the next section, we will see that function $w$ given by \eqref{the function w} has sufficient decay with respect to both $x$ and $y$ directions, then it will reverse the relation described in Proposition  \ref{Prop: duality}. More concretely, via direct calculation, we can see that $w$ satisfies the equation
	\begin{equation}\label{comp 1}
		\begin{split}
			&\quad \, \nabla_{x,y}\cdot \LC y^{2s-1}\wt \sigma \nabla_{x,y}w(t,x,y)\RC  \\
			&=y^{2s-1}\int_y^\infty \mu^{1-2s}\nabla \cdot \LC \sigma\nabla  \wt u (t,x,\mu) \RC d\mu - \p_y \wt u (t,x,y) \\
			&=-y^{2s-1}\int_{y}^\infty  \p_\mu \LC \mu^{1-2s}\p_\mu \wt u(t,x,\mu)\RC  dz +y^{2s-1} \int_{y}^\infty \mu^{1-2s}\p_t \wt u(t,x,\mu) \, d\mu \\
			&\quad  \, -\p_y \wt u(t,x,y) \\
			& = y^{2s-1}\p _t \LC \int_y^\infty \mu^{1-2s} \wt u (t,x,\mu) \, d\mu\RC \\
			&= y^{2s-1}\p _t w(t,x,y),
		\end{split}
	\end{equation}
	where we used the equation \eqref{conjugate u2} of $\wt u$ so that $\wt u$ has appropriate decay at infinity and the definition \eqref{the function w}. Combined with the computation \eqref{v=w0} and \eqref{comp 1}, one can see that the function $w$ given by \eqref{the function w} is a solution to 
	\begin{align}\label{extension problem for v-w}
		\begin{cases}
			y^{2s-1} \p _t w - \nabla_{x,y}\cdot \LC y^{2s-1}\wt \sigma \nabla_{x,y}w \RC   =0 &\text{ in }\R^{n+2}_+, \\
			w(t,x,0)=v(t,x)&\text{ in }\R^{n+1}.
		\end{cases}
	\end{align}
	By using the above equation, combined with \eqref{Prop: extension}, the operator $\LC  \p_t -\nabla \cdot \sigma \nabla \RC^{1-s}$ is characterized as the Neumann data of the problem \eqref{extension problem for v-w}. In other words, we have 
	\begin{align}\label{u-v relation}
		\begin{split}
			\LC  \p_t -\nabla \cdot \sigma \nabla \RC^{1-s} v(t,x) &=d_{1-s} \lim_{y\to 0} y^{2s-1}\p_y w(t,x,y)\\
			& =-d_{1-s}\wt u(t,x,0)\\
			&=-d_{1-s} u(t,x)
		\end{split}
	\end{align}
	holds formally, where $d_{1-s}$ is a constant given by \eqref{d_s} when the parameter $s$ is replaced by $1-s$. Moreover, via the semigroup property for the nonlocal parabolic operator, one can apply the nonlocal operator $ \LC  \p_t -\nabla \cdot \sigma \nabla \RC^{s}$ onto \eqref{u-v relation}, then a formal computation yields 
	\begin{align}
		\LC  \p_t -\nabla \cdot \sigma \nabla \RC v =-d_{1-s} \LC  \p_t -\nabla \cdot \sigma \nabla \RC^{s} u \text{ in }\R^{n+1}.
	\end{align}
	To summarize, when $u$ is the solution to \eqref{equ nonlocal para}, we have $ \wt u\in \mathcal{L}^{1,2}(\R^{n+2}_+; y^{1-2s}dtdxdy)$ and  $ \LC  \p_t -\nabla \cdot \sigma \nabla \RC v=0$ in $\Omega_T$, where $\wt u$ is defined in \eqref{degen para PDE}.

	\section{Regularity estimates of solutions}\label{sec: regularity}
	
	In this section, we demonstrate that the function $v(t,x)$ defined by \eqref{the function v}, which has suitable regularity properties. Moreover, with suitable decay estimates at hand, we can make our arguments hold rigorously.

	\begin{proposition}\label{Prop: bound w}
		Let $\Omega, W\subset \R^n$ be bounded open sets with Lipschitz boundaries, for $n\geq 2$ and $T>0$. Let $u\in \mathbf{H}^s(\R^{n+1})$ with compact support in $\overline{(\Omega \cup W)_T}$ and $\wt u\in \mathcal{L}^{1,2}(\R^{n+2}_+; y^{1-2s}dtdxdy)$ be the corresponding solution of the extension problem \eqref{degen para PDE}. Let $w:\R^{n+2}_+\to \R$ be the function given by \eqref{the function w}. Then for $y>0$, the function $w\in L^m(-T,T;L^\infty(\R^n))$ for any $m\in [1,2]$, and the limit $\lim_{y\to 0}w(t,x,y)=w(t,x,0)$ exists with $w(t,x,0)\in L^2(-T,T; H^1(\R^n))$. Moreover, there holds that 
		\begin{equation}\label{estimate u and v}
			\left\| v_f \right\|_{L^2(-T,T;H^1(\R^n))} \lesssim \left\| u_f \right\|_{\mathbf{H}^s(\R^{n+1})}.
		\end{equation}
	\end{proposition}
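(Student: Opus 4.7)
The plan is to derive an explicit formula for $w$ in terms of the heat semigroup $\mathcal{P}_\tau$ and then extract the required bounds. Substituting the representation \eqref{wt u expression} with kernel \eqref{kernel for extension} for $\wt u$ into the definition \eqref{the function w}, applying Fubini, and evaluating the $\mu$-integral via the substitution $\mu^2 = r$,
\[
\int_y^\infty \mu^{1-2s} \frac{\mu^{2s}}{\tau^{1+s}} e^{-\mu^2/(4\tau)} d\mu = \int_y^\infty \frac{\mu}{\tau^{1+s}} e^{-\mu^2/(4\tau)} d\mu = \frac{2}{\tau^{s}} e^{-y^2/(4\tau)},
\]
I would obtain
\[
w(t,x,y) = \frac{1}{2^{2s-1}\Gamma(s)} \int_0^\infty \tau^{-s} e^{-y^2/(4\tau)} \mathcal{P}_\tau u(t,x) d\tau,
\]
where $\mathcal{P}_\tau$ is as in \eqref{e-semni gp}; formally setting $y = 0$ recovers the candidate $v(t,x) = c_s \int_0^\infty \tau^{-s} \mathcal{P}_\tau u(t,x) d\tau$ matching \eqref{the function v}.

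For the $L^m_t L^\infty_x$ bound with $y > 0$ fixed, the Gaussian upper bound \eqref{est-heat-kernel-sec2} on $p$, Cauchy--Schwarz in $z$, and the compact spatial support of $u(t-\tau,\cdot)$ in $\Omega \cup W$ would yield
\[
|w(t,x,y)| \lesssim \int_0^\infty \tau^{-s-n/4} e^{-y^2/(4\tau)} \|u(t-\tau,\cdot)\|_{L^2(\R^n)} d\tau.
\]
For $t \in (-T,T)$, the temporal compact support of $u$ restricts the $\tau$-integration effectively to $[0, 2T]$, on which the kernel $g_y(\tau) := \tau^{-s-n/4} e^{-y^2/(4\tau)}$ is integrable for each $y > 0$ (the Gaussian dominates the algebraic blow-up as $\tau \to 0^+$). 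Minkowski's convolution inequality in $t$ then produces $\|w(\cdot,\cdot,y)\|_{L^m_t((-T,T); L^\infty_x)} \lesssim \|g_y\|_{L^1}\|u\|_{L^m_t L^2_x} \lesssim \|u\|_{\mathbf{H}^s}$ for $m \in [1,2]$, using H\"older in time and $\mathbf{H}^s \hookrightarrow L^2$. Letting $y \to 0^+$, dominated convergence on the truncated $\tau$-integral gives $w(\cdot,\cdot,y) \to v$ in $L^2((-T,T); L^2(\R^n))$.

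The $L^2_t H^1_x$ estimate for $v$ is the main step. Differentiating under the integral,
\[
\nabla v(t,x) = \frac{1}{2^{2s-1}\Gamma(s)} \int_0^\infty \tau^{-s} \nabla \mathcal{P}_\tau u(t,x) d\tau,
\]
I would invoke the spectral smoothing estimate for $L = -\nabla \cdot \sigma\nabla \geq 0$ on $L^2(\R^n)$: using the scalar inequality $\lambda e^{-2\tau \lambda} \lesssim \tau^{-(1-s)} \lambda^s$ valid for $\lambda \geq 0$ and $s \in [0,1]$,
\[
\|\nabla e^{-\tau L} f\|_{L^2(\R^n)}^2 \asymp \langle L e^{-2\tau L} f, f\rangle = \int_0^\infty \lambda e^{-2\tau \lambda} d\|E_\lambda f\|^2 \lesssim \tau^{-(1-s)} \|(-L)^{s/2} f\|_{L^2(\R^n)}^2.
\]
Combined with the Kato-type norm equivalence $\|(-L)^{s/2} f\|_{L^2(\R^n)} \asymp \|f\|_{H^s(\R^n)}$ (valid for $s \in [0,1]$), this would yield $\|\nabla \mathcal{P}_\tau u(t,\cdot)\|_{L^2_x} \lesssim \tau^{-(1-s)/2} \|u(t-\tau,\cdot)\|_{H^s(\R^n)}$. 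Since $(s+1)/2 < 1$, the kernel $\tau^{-(s+1)/2}$ is $L^1$ on $[0, 2T]$, and Minkowski's convolution inequality delivers
\[
\|\nabla v\|_{L^2((-T,T); L^2(\R^n))} \lesssim \|u\|_{L^2_t H^s_x} \leq \|u\|_{\mathbb{H}^s(\R^{n+1})} = \|u\|_{\mathbf{H}^s(\R^{n+1})},
\]
where the final equality invokes the identification \eqref{space identification}. An analogous but easier argument (replacing the gradient estimate by $\|\mathcal{P}_\tau f\|_{L^2} \leq \|f\|_{L^2}$) controls $\|v\|_{L^2((-T,T); L^2_x)}$. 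The principal obstacle is justifying the Kato-type equivalence $\|(-L)^{s/2} f\|_{L^2} \asymp \|f\|_{H^s}$ for the variable-coefficient operator $L$ with $C^2$ coefficients; this follows from $\mathrm{Dom}(L^{1/2}) = H^1(\R^n)$ (the Kato square-root theorem in the self-adjoint setting) together with complex interpolation between $L^2$ and $H^1$.
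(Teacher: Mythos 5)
Your proposal is correct and takes a genuinely different route from the paper. The paper works entirely with the explicit Gaussian kernel $p(x,z,\tau)$ and its pointwise estimates: it proves decay bounds for $\wt u$ in Lemma~\ref{Lemma: decay estimate}, then for the $H^1$ bound on $v$ it splits into the cases $s\in(0,\tfrac12)$ and $s\in[\tfrac12,1)$; the latter case is handled by subtracting an auxiliary time-independent profile $\wt g$, estimating the pieces $J_1$, $J_2$ via fractional Sobolev seminorms, and separately proving the $L^\infty$ bound \eqref{bound for g} for $\nabla\int_0^\infty \mu^{1-2s}\wt g\,d\mu$ (a further two pages). You instead collapse $w$ to the semigroup expression
\begin{equation}
w(t,x,y) = \frac{1}{2^{2s-1}\Gamma(s)} \int_0^\infty \tau^{-s} e^{-y^2/(4\tau)}\,\mathcal{P}_\tau u(t,x)\, d\tau
\end{equation}
and handle the gradient via the spectral smoothing bound $\|\nabla e^{-\tau L}f\|_{L^2}\lesssim \tau^{-(1-s)/2}\|L^{s/2}f\|_{L^2}$ (from the elementary inequality $\lambda e^{-2\tau\lambda}\lesssim\tau^{-(1-s)}\lambda^s$) together with $\|L^{s/2}f\|_{L^2}\asymp\|f\|_{H^s}$; the resulting convolution kernel $\tau^{-(s+1)/2}$ is integrable near $0$ for every $s\in(0,1)$, so no case split is needed and the estimate is automatically global in $x$, subsuming the paper's Steps 3--5 in one stroke. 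Two small remarks. First, the identification of $\mathrm{Dom}(L^{1/2})$ with $H^1(\R^n)$ is elementary here: $L$ is defined through the symmetric form $\int\sigma\nabla f\cdot\nabla g$ with elliptic $\sigma$, so the form-domain characterization of self-adjoint operators gives it directly, without the (deep) nonsymmetric Kato square-root theorem; complex interpolation then yields $\mathrm{Dom}(L^{s/2})=H^s$. Second, for the $y\to0$ limit you should not lean on the $L^m_tL^\infty_x$ bound, since $\|g_y\|_{L^1[0,2T]}$ blows up as $y\to0$ for $s+n/4\ge1$; instead note that $|w(t,x,y)|\le E(t,x):= c_s\int_0^{2T}\tau^{-s}\mathcal{P}_\tau|u|(t,x)\,d\tau$ uniformly in $y$, with $\|E\|_{L^2_{t,x}}\lesssim\|u\|_{L^2}$ by Young's inequality for convolutions, and then apply dominated convergence in $(t,x)$; this closes the step you summarize as ``dominated convergence on the truncated $\tau$-integral.''
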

	
	Before showing Proposition \ref{Prop: bound w}, we first prove several decay estimates for the solution $\wt u$ to \eqref{degen para PDE} with respect to the $y$-direction.
	
	\begin{lemma}\label{Lemma: decay estimate}
		Let $\Omega, W\subset \R^n$ be bounded open sets with Lipschitz boundaries, for $n\in \N$ and $T>0$. Given $s\in (0,1)$, let $u\in \mathbf{H}^s(\R^{n+1})$ with $\supp(u)\subset \overline{(\Omega\cup W)_T}$, and $\wt u$ be the corresponding solution of the extension problem \eqref{degen para PDE}. Then for any $m, q\in [1,2]$, the function $\wt u(t,x,y)$ satisfies the following decay estimates:
		\begin{enumerate}[(a)]
			\item\label{item a decay x} \begin{equation}\label{decay estimate in x}
				\begin{split}
					\left\|   \wt u(\cdot,x ,y)  \right\|_{L^m(\R)} & \lesssim y^{-n} \norm{u}_{L^m(\R; L^1(\R^{n}))}, \\
					\left\|  \nabla \wt u(\cdot,x ,y)  \right\|_{ L^m(\R)} & \lesssim y^{-n-1} \norm{u}_{L^m (\R;L^1(\R^{n}))},
				\end{split}
			\end{equation}
			for any $(x,y)\in \R^{n+1}_+:=\R^n \times (0,\infty)$.
			
			
			\item\label{item c decay y} For $1\leq p,q,r \leq \infty$ with $1+\frac{1}{r}=\frac{1}{p}+\frac{1}{q}$, $\wt u$ satisfies 
			\begin{align}\label{decay estimate in y}
				\begin{split}
					\left\|  \wt u(\cdot ,\cdot,y)\right\|_{L^m(\R;L^r(\R^{n}))} &\lesssim y^{\frac{n}{p}-n}\norm{u}_{L^m(\R;L^q(\R^n))},\\
					\left\|\nabla_{x,y}\wt u(\cdot ,\cdot , y)	\right\|_{L^m(\R;L^r(\R^{n}))}  &\lesssim y^{\frac{n}{p}-n-1}\norm{u}_{L^m(\R;L^q(\R^n))}.
				\end{split}
			\end{align}
			
			\end{enumerate}
		\end{lemma}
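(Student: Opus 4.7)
\medskip
\noindent\textbf{Proof proposal.} The whole argument rests on the explicit representation \eqref{wt u expression}--\eqref{kernel for extension} of $\wt u$, combined with the Gaussian heat-kernel bounds \eqref{est-heat-kernel-sec2}--\eqref{heat kernel estimate}. Writing $\wt u$ as
\begin{align}
\wt u(t,x,y)=\frac{1}{2^{2s}\Gamma(s)}\int_0^\infty \frac{y^{2s}}{\tau^{1+s}}e^{-\frac{y^2}{4\tau}}(\mathcal P_\tau u)(t,x)\,d\tau,
\end{align}
I would first take an $L^m_t$-norm and pull it inside via Minkowski's integral inequality, using that $\mathcal P_\tau$ acts on the time variable as a translation (the convolution in $\tau$ does not change the $L^m_t$-norm of $u(\cdot-\tau,z)$). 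The remaining task is then to bound a $\tau$-integral depending only on the spatial variables.

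For item \ref{item a decay x}, I would simply dominate $e^{-c|x-z|^2/\tau}\le 1$ to integrate out $z$ and produce $\|u(\cdot,z)\|_{L^m(\R)}$ integrated against $dz$, i.e.\ $\|u\|_{L^m(\R;L^1(\R^n))}$. What is left is
\begin{align}
\int_0^\infty \frac{y^{2s}}{\tau^{1+s+n/2}}\,e^{-\frac{y^2}{4\tau}}\,d\tau,
\end{align}
which after the change of variables $\eta=y^2/(4\tau)$ collapses to $y^{-n}$ times a finite Gamma integral $\int_0^\infty \eta^{s+n/2-1}e^{-\eta}\,d\eta$. For the gradient bound in \eqref{decay estimate in x}, I would use \eqref{heat kernel estimate} with $\ell=1$, which replaces $\tau^{-n/2}$ by $\tau^{-(n+1)/2}$ and, after the same substitution, yields the extra $y^{-1}$ factor; the $y$-derivative $\partial_y\wt u$ is treated analogously, noting that $\partial_y\bigl(y^{2s}e^{-y^2/(4\tau)}\bigr)=y^{2s-1}(2s-y^2/(2\tau))e^{-y^2/(4\tau)}$, and the polynomial factor is absorbed into a slightly weaker Gaussian $e^{-y^2/(8\tau)}$, after which the same Gamma computation produces the desired $y^{-n-1}$.

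For item \ref{item c decay y}, I would instead estimate $\mathcal P_\tau u(t,\cdot)$ by Young's convolution inequality: using the Gaussian bound $p(x,z,\tau)\lesssim \tau^{-n/2}e^{-c|x-z|^2/\tau}$, which is a translation-invariant majorant, one gets
\begin{align}
\|\mathcal P_\tau u(t,\cdot)\|_{L^r(\R^n)}\lesssim \|\tau^{-n/2}e^{-c|\cdot|^2/\tau}\|_{L^p(\R^n)}\,\|u(t-\tau,\cdot)\|_{L^q(\R^n)}\lesssim \tau^{-\frac{n}{2}+\frac{n}{2p}}\|u(t-\tau,\cdot)\|_{L^q(\R^n)}
\end{align}
whenever $1+1/r=1/p+1/q$. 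Plugging this back, taking the $L^m_t$-norm by Minkowski, and carrying out the same substitution $\eta=y^2/(4\tau)$ in
\begin{align}
\int_0^\infty \frac{y^{2s}}{\tau^{1+s}}\,e^{-\frac{y^2}{4\tau}}\,\tau^{-\frac{n}{2}+\frac{n}{2p}}\,d\tau
\end{align}
produces exactly the factor $y^{n/p-n}$ asserted in \eqref{decay estimate in y}. For $\nabla_{x,y}\wt u$, one repeats the same scheme either with the gradient heat-kernel bound (for $\nabla_x$) or after $\partial_y$ is applied to the kernel (for $\partial_y$); in either case one shift $\tau^{-n/2}\mapsto \tau^{-(n+1)/2}$ appears in the Young bound, and the $\tau$-integral yields the extra $y^{-1}$.

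The only genuinely delicate point I anticipate is the $\partial_y$ step: the derivative of $y^{2s}e^{-y^2/(4\tau)}$ generates a polynomial prefactor $y^2/\tau$ of indefinite sign, and one must verify that after absorbing it into a weaker Gaussian $e^{-y^2/(8\tau)}$ the remaining $\tau$-integral still converges at $\tau=0$ and at $\tau=\infty$; this amounts to checking that the exponents $s+n/2-n/(2p)$ and $s+n/2+1/2-n/(2p)$ fall in the range where the Gamma integral converges, which they do since $s\in(0,1)$ and $p\in[1,\infty]$. All other estimates are routine applications of Minkowski, Young, and the substitution $\eta=y^2/(4\tau)$.
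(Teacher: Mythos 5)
Your proposal is correct and follows essentially the same route as the paper's proof: the representation formula \eqref{wt u expression}, the Gaussian heat-kernel bounds \eqref{heat kernel estimate}, Young/Minkowski in the time variable, Young's convolution in the space variable, and a Gamma-type evaluation of the $\tau$-integral (your substitution $\eta=y^2/(4\tau)$ is exactly what the paper packages as Lemma~\ref{Lemma: comp 1 in appendix}), with the only cosmetic difference that you apply the spatial Young inequality to the Gaussian majorant before carrying out the $\tau$-integration, whereas the paper first integrates in $\tau$ to produce the Riesz-type kernel $(|x-z|^2+y^2)^{-\frac{n+\ell}{2}-s}$ in \eqref{comp of decay x} and then takes its $L^p$-norm. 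One minor bookkeeping slip at the end: for $\partial_y\wt u$ the extra $y^{-1}$ comes from the $y^{2s}\mapsto y^{2s-1}$ prefactor that you correctly derive by differentiating the kernel and absorbing the polynomial into a weaker Gaussian, not from a $\tau^{-n/2}\mapsto\tau^{-(n+1)/2}$ shift in the Young bound, although the resulting power $y^{n/p-n-1}$ is of course the same.
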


		Before proving the lemma, let us offer an auxiliary result, which will be used in the proof of Lemma \ref{Lemma: decay estimate}.
		
		\begin{lemma}\label{Lemma: comp 1 in appendix}
			For any $b, A>0$, let $f_b(A):=	\int_0^\infty \tau^{-(b+1)} e^{-\frac{A}{4\tau}}\, d\tau $, then there holds
			\begin{equation}\label{comp1 in appendix}
				\int_0^\infty \tau^{-(b+1)} e^{-\frac{A}{4\tau}}\, d\tau =f_b(1)A^{-b}.
			\end{equation}
		\end{lemma}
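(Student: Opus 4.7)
The plan is to prove this identity by a single change of variables that rescales out the dependence on $A$. Since the integrand $\tau^{-(b+1)}e^{-A/(4\tau)}$ is homogeneous in a natural sense with respect to the rescaling $\tau \mapsto A\tau$, the scaling substitution $\tau = As$ (with $ds = d\tau/A$) should immediately reduce the integral over $(0,\infty)$ in $\tau$ to the integral defining $f_b(1)$, multiplied by the correct power $A^{-b}$.

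More concretely, I would substitute $\tau = As$ so that $d\tau = A\, ds$ and the limits $\tau \in (0,\infty)$ are preserved in the new variable $s\in (0,\infty)$. The measure transforms as
\begin{equation*}
\tau^{-(b+1)}\, d\tau = A^{-(b+1)} s^{-(b+1)} \cdot A\, ds = A^{-b}\, s^{-(b+1)}\, ds,
\end{equation*}
while the exponential factor becomes $e^{-A/(4As)} = e^{-1/(4s)}$, eliminating the parameter $A$ from the exponent. Substituting both into the integral gives
\begin{equation*}
\int_0^\infty \tau^{-(b+1)} e^{-\frac{A}{4\tau}}\, d\tau = A^{-b}\int_0^\infty s^{-(b+1)} e^{-\frac{1}{4s}}\, ds = A^{-b} f_b(1),
\end{equation*}
which is exactly \eqref{comp1 in appendix}.

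Since $b, A > 0$, the integrand decays like $\tau^{-(b+1)}$ at infinity (integrable there) and is bounded by $\tau^{-(b+1)} e^{-A/(4\tau)}$ near $0$, which vanishes to all orders thanks to the exponential factor, so $f_b(A)$ is a finite positive number and all manipulations are justified by standard absolute convergence. There is no real obstacle here: the content of the lemma is purely the homogeneity under the dilation $\tau \mapsto A\tau$, and this one-line substitution is the whole argument.
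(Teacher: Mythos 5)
Your proof is correct, and it takes a genuinely different and more direct route than the paper's. The paper proceeds indirectly: it integrates by parts to establish the relation $f_b(A) = -\tfrac{A}{b} f_b'(A)$, recognizes the resulting first-order ODE $f_b'(A)/f_b(A) = -b/A$, and solves it to conclude $f_b(A) = f_b(1)A^{-b}$. You instead observe the homogeneity of the integrand under the dilation $\tau \mapsto A\tau$ and extract the factor $A^{-b}$ in one substitution, which is shorter and requires no differentiation under the integral sign or appeal to ODE uniqueness. Both are valid; your scaling argument is arguably cleaner and makes the underlying reason for the identity (dimensional homogeneity) transparent, whereas the paper's ODE method is a slightly heavier tool that happens to also work. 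Your remark on absolute convergence (exponential vanishing at $\tau \to 0^+$, polynomial decay $\tau^{-(b+1)}$ at $\tau\to\infty$ with $b>0$) correctly justifies finiteness of $f_b(A)$ and the legitimacy of the substitution.
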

		
		\begin{proof}
			By the integration by parts, one has
			\begin{equation}
				\begin{split}
					f_b(A)&=-\int_0^\infty \frac{d}{d\tau} \LC \frac{1}{b}\tau^{-b}\RC e^{-\frac{A}{4\tau}}\, d\tau  \\
					&= \frac{1}{b}\int_0^\infty \tau^{-b} \frac{d}{d\tau} \LC e^{-\frac{A}{4\tau}}\RC d\tau \\
					&=\frac{A}{4b} \int_0^\infty \tau^{-(b+2)}e^{-\frac{A}{4\tau}}\, d\tau.
				\end{split}
			\end{equation}
			Moreover, it is easy to see that $f_b'(A)=-\frac{1}{4} \int_0^\infty \tau^{-(b+2)}e^{-\frac{A}{4\tau}}\, d\tau$. From the preceding computations, one has 
			\begin{equation}
				\begin{split}
					f_{b}(A)=-\frac{A}{b}f_b ' (A), 
				\end{split}
			\end{equation}
			which implies 
			\begin{equation}
				\frac{f_b' (A)}{f_b(A)}=-\frac{b}{A}.
			\end{equation}
			Solving the above ordinary differential equation, one can see that
			\begin{equation}
				f_b(A)= f_b(1)A^{-b},
			\end{equation}
			which proves the assertion.
		\end{proof}
		
		With the aid of the preceding lemma, we can show Lemma \ref{Lemma: decay estimate}.

		\begin{proof}[Proof of Lemma \ref{Lemma: decay estimate}]
			For \ref{item a decay x}, combined with formulas \eqref{wt u expression} of $\wt u$ and \eqref{kernel for extension} in Section \ref{sec: duality}, one has 
			\begin{align}\label{representation formula for wt u}
				\wt u(t,x,y)=c_s y^{2s}\int_{\R^{n}} \int_0^\infty  e^{-\frac{y^2}{4\tau }} p(x,z,\tau)u(t-\tau,z)\, \frac{d\tau }{\tau^{1+s}} dz,
			\end{align}
			where $p(x,z,\tau)$ denotes the heat kernel satisfying \eqref{heat kernel} and $c_s$ is a constant depending only on $s$ of the form 
			\begin{equation}\label{c_s}
				c_s:=\frac{1}{2^{2s}\Gamma(s)}.
			\end{equation} 
			Hence, with the heat kernel estimate \eqref{heat kernel estimate} at hand, we can have the pointwise estimate 
			\begin{equation}\label{comp of decay1}
				\begin{split}
					\left| \nabla^\ell_x \wt u(t,x,y) \right|  &\lesssim y^{2s} \int_{\R^n}\int_0^\infty  \left| \nabla^\ell_x p(x,z,\tau) \right| e^{-\frac{y^2}{4\tau }} \abs{u(t-\tau,z)}\,  \frac{d\tau}{\tau^{1+s}} dz \\
					&\lesssim y^{2s} \int_{\R^n}\int_0^\infty   \tau^{-(\frac{n+\ell}{2}+1+s)}e^{- (c \frac{|x-z|^2}{\tau}+\frac{y^2}{4\tau }) }  \abs{u(t-\tau,z)}\,  d\tau  dz,
				\end{split}
			\end{equation}
			for $\ell=0,1$.
			Note that the right hand side of \eqref{comp of decay1} can be viewed as a convolution of space and time variables (one may extend the integrand to be zero in the region $\tau\in (-\infty,0)$).

			Recall that the Young's convolution inequality states
			\begin{equation}\label{Young convolution}
				\norm{f\ast g}_{L^r}=\left\| \int f(\zeta )g(\xi -\zeta)\, d\zeta \right\|_{L^r_{\xi}} \leq \norm{f}_{L^p}\norm{g}_{L^q}, \text{ with }1+\frac{1}{r}=\frac{1}{p}+\frac{1}{q},
			\end{equation} 
			where $\zeta, \xi$ could be either space or time variables in the upcoming applications. To make our notation more clear, here we use $L^r_\xi$ to denote the $L^r$ Lebesgue norm with respect to the $\xi$-variable.
			Let us take $L^m$ norm in the time variable to \eqref{comp of decay1}, applying \eqref{Young convolution} for the exponents $p=1$, $q=m \in [1,2]$ with respect to the time-variable, then one can have 
			\begin{equation}\label{comp of decay2}
				\begin{split}
					&\quad \,  \left\|  \nabla^\ell_x \wt u(\cdot,x,y) \right\|_{L^m(\R)} \\
					&\lesssim y^{2s}\int_{\R^n} \left\{ \LC \int_{0}^\infty   \tau^{-(\frac{n+\ell}{2}+1+s)}e^{- (c \frac{|x-z|^2}{\tau}+\frac{y^2}{4\tau }) } \, d\tau \RC   \norm{u(\cdot, z)}_{L^m(\R)} \right\} dz.
				\end{split} 
			\end{equation}
			Observing that 
			\begin{equation}\label{comp of decay3}
				\int_{0}^\infty  \frac{e^{- (c \frac{|x-z|^2}{\tau}+\frac{y^2}{4\tau }) } }{ \tau^{\frac{n+\ell}{2}+s+1}}\, d\tau \approx \LC\abs{x-z}^2 +y^2 \RC^{-\frac{n+\ell}{2}-s}
			\end{equation}
			as shown in \eqref{comp1 in appendix} from Lemma \ref{Lemma: comp 1 in appendix}.
			Inserting \eqref{comp of decay2} and \eqref{comp of decay3} into \eqref{comp of decay1}, we obtain 
			\begin{align}\label{comp of decay x}
				\begin{split}
					&\quad \, \left\|   \nabla^\ell_x \wt u(\cdot,x ,y)  \right\|_{L^m(\R)} \\
					&\lesssim y^{2s}\int_{\R^n} \left\{ \LC\abs{x-z}^2 +y^2 \RC^{-\frac{n+\ell}{2}-s} \norm{u(\cdot, z)}_{L^m(\R)} \right\} dz,
				\end{split}
			\end{align}
			for $\ell=0,1$.

			Meanwhile, let us point out that the right hand side of \eqref{comp of decay x} can be regarded as the convolution with respect to the space variable. 
			Next, applying \eqref{Young convolution} again to the $x$-variable, for the exponents $r=\infty$, $p=\infty$ and $q=1$, we have
			\begin{equation}
				\left\|   \nabla^\ell_x \wt u(\cdot,\cdot ,y)  \right\|_{L^1(\R; L^\infty(\R^n))}\lesssim y^{-n-\ell} \norm{u}_{L^1(\R^{n+1})},
			\end{equation}
			where we used the fact that 
			\begin{equation}\label{comp of decay4}
				\sup_{z\in \R^n} \LC \abs{z}^2 +y^2 \RC^{-\frac{n+\ell}{2}-s} \leq y^{-n-\ell-2s},
			\end{equation}
			for $\ell=0,1$.
			Similar to the computation \eqref{comp of decay x}, we obtain 
			\begin{equation}
				\begin{split}
					\left\|   \nabla^\ell_x \wt u(\cdot,\cdot ,y)  \right\|_{L^m(\R; L^r(\R^n))}  \lesssim y^{n/p-n-\ell} \norm{u}_{L^m(\R; L^q(\R^{n}))},
				\end{split}
			\end{equation}
			for $m\in [1,2]$, where we used the fact that 
			\begin{equation}\label{L^p estimate for kernel}
				\begin{split}
					\left\| \frac{y^{2s}}{ (\abs{z}^2 +y^2 )^{\frac{n+\ell}{2}+s}}\right\|_{L^p(\R^n)}  \approx \LC \int_0^\infty \frac{y^{2sp}r^{n-1}}{(r^2+y^2)^{p(\frac{n+\ell}{2}+s)}} \, dr\RC^{1/p} \lesssim y^{n/p-n-\ell},
				\end{split}
			\end{equation}
			for $\ell =0,1$.
			This shows \ref{item a decay x}.

			For \ref{item c decay y},  we can calculate 
			\begin{equation}\label{comp of decay y1}
				\begin{split}
					\left| \p_y \wt u(t,x,y) \right| & \lesssim y^{2s-1} \int_{\R^n}\int_0^\infty p(x,z,\tau) e^{-\frac{y^2}{4\tau}}|u(t-\tau,z) |\, \frac{d\tau}{\tau^{1+s}} dz \\
					& \quad \,  + y^{2s+1} \int_{\R^n}\int_0^\infty p(x,z,\tau) e^{-\frac{y^2}{4\tau}}|u(t-\tau,z) |\, \frac{d\tau}{\tau^{2+s}} dz .
				\end{split}
			\end{equation}
			Similar as previous cases, we can make use of the Young's convolution inequality again for both terms in the right hand side of \eqref{comp of decay y1}, then direct computations yield that 
			\begin{equation}\label{decay y detail 1}
				\begin{split}
					&\quad \, \left\| y^{2s-1} \int_{\R^n}\int_0^\infty p(x,z,\tau) e^{-\frac{y^2}{4\tau}}|u(t-\tau,z) |\, \frac{d\tau}{\tau^{1+s}} dz \right\|_{L^m(\R; L^r(\R^n))}\\
					& \lesssim \left\|  y^{2s-1} \int_{\R^n}\int_0^\infty \tau^{-(\frac{n}{2}+1+s)}e^{- (c \frac{|x-z|^2}{\tau}+\frac{y^2}{4\tau }) } |u(t-\tau,z) |\, d\tau dz \right\|_{L^m(\R; L^r(\R^n))} \\
					&\lesssim \left\|  y^{2s-1} \int_{\R^n} \LC\abs{x-z}^2 +y^2 \RC^{-\frac{n}{2}-s}  \norm{u(\cdot,z)}_{L^m(\R)} \,  dz\right\|_{L^r(\R^n)} \\
					&\lesssim 	 \left\| \frac{y^{2s-1}}{ (\abs{z}^2 +y^2 )^{\frac{n}{2}+s}}\right\|_{L^p(\R^n)} \norm{u}_{L^m(\R; L^q(\R^n))}  \\
					&\lesssim y^{n/p-n-1}\norm{u}_{L^m(\R; L^q(\R^n))} ,
				\end{split}
			\end{equation}
			and similarly,
			\begin{equation}\label{decay y detail 2}
				\begin{split}
					&\quad \, \left\|  y^{2s+1} \int_{\R^n}\int_0^\infty p(x,z,\tau) e^{-\frac{y^2}{4\tau}}|u(t-\tau,z) |\, \frac{d\tau}{\tau^{2+s}} dz\right\|_{L^m(\R; L^r(\R^n))}  \\
					&\lesssim \left\|  y^{2s+1} \int_{\R^n} \LC \norm{x-z}^2 +y^2 \RC^{-\frac{n}{2}-1-s} \norm{u(\cdot,z)}_{L^m(\R)}   \right\|_{L^m(\R;L^r(\R^n))} \\
					&\lesssim y^{n/p-n-1}\norm{u}_{ L^q(\R^n)} ,
				\end{split}
			\end{equation}
			where we used the fact \eqref{L^p estimate for kernel}. This proves the assertion.
		\end{proof}
		
		We are ready to prove Proposition \ref{Prop: bound w}.
		
		\begin{proof}[Proof of Proposition \ref{Prop: bound w}]
			Since $u$ is suppoted in $\overline{\LC \Omega \cup W\RC_T}$, we assume that there exists a ball $B_R\subset \R^n$ centered at the origin and radius $R>0$ such that $\supp(u) \subset [-T,T]\times B_R$. Let us argue in three steps:
			
			\medskip
			
			{\it Step 1. Initial regularity.} 
			
			\medskip
			
			\noindent By \eqref{comp of decay1}, \eqref{comp of decay3} and compact condition of $\overline{\supp(u)}$, we have that
			\begin{equation}\label{w1}
				\begin{split}
					\left| \nabla^\ell_x \wt u(t,x,y) \right|
					&\lesssim y^{2s}\int_{\R^n}\int_0^\infty  \left| \nabla^\ell_x p(x,z,\tau) \right| e^{-\frac{y^2}{4\tau }} \abs{u(t-\tau,z)}\,  \frac{d\tau}{\tau^{1+s}} dz \\
					&\lesssim \underbrace{y^{2s} \int_{\R^n}\int_0^\infty   \tau^{-(\frac{n+\ell}{2}+1+s)}e^{- (c \frac{|x-z|^2}{\tau}+\frac{y^2}{4\tau }) }  \abs{u(t-\tau,z)}\,  d\tau  dz}_{\text{By \eqref{heat kernel estimate}}}\\
					&\lesssim y^{2s} \int_{\R^n}\LC\int_0^\infty   \tau^{-2(\frac{n+\ell}{2}+1+s)}e^{- 2(c \frac{|x-z|^2}{\tau}+\frac{y^2}{4\tau }) }\,d\tau \RC^{1/2}   \\
					&\qquad \qquad \qquad \cdot \LC\int_0^\infty\abs{u(t-\tau,z)}^2\,  d\tau\RC^{1/2}   dz\\
					&\lesssim \underbrace{y^{2s} \int_{\R^n}\LC\abs{x-z}^2 +y^2 \RC^{-\frac{n+\ell+1+2s}{2}} \norm{u(\cdot,z)}_{L^2(\R)}  \, dz}_{\text{By \eqref{comp1 in appendix} as }b=n+1+\ell +2s},
				\end{split}
			\end{equation}
			for $\ell=0,1$, where we used the H\"older's inequality and \eqref{comp1 in appendix}. 
			In further, we can obtain 
			\begin{equation}
				\left| \nabla^\ell_x \wt u(t,x,y) \right|\lesssim y^{-n-\ell-1}\norm{u}_{ L^2(-T,T; L^1(\R^{n}))},
			\end{equation}
			where we used the fact $y^{2s}\LC\abs{x-z}^2 +y^2 \RC^{-\frac{n+\ell+1+2s}{2}} \leq y^{-n-\ell-1} $ for $\ell=0,1$.
			Next, we want to check that $w(t,x,y)$ is well-defined for $y>0$. To this end, we can estimate the function $w$ for $y>0$
			\begin{equation}\label{w2}
				\begin{split}
					\left| \nabla^\ell_x w(t,x,y) \right| & \leq \int_{y}^{\infty} \mu^{1-2s} \left| \nabla^\ell_x \wt u(t,x,\mu) \right| \, d\mu  \\
					&\lesssim \int_{y}^\infty \mu^{1-2s-n-\ell-1}\norm{u}_{ L^2(-T,T; L^1(\R^{n}))}\, d\mu \\
					&\lesssim  y^{1-2s-n-\ell}\norm{u}_{ L^2(-T,T; L^1(\R^{n}))},					
				\end{split}
			\end{equation}
			for $\ell=0,1$. Since $\overline{\supp(u)}$ is compact and $u\in \mathbf{H}^s(\R^{n+1})$, hence, the right hand side of the estimate \eqref{w2} is finite, for a.e., $(t,x,y)\in \R^{n+2}_+$.

			\medskip

			{\it Step 2. $L^2$-estimate for $v(t,x)=w(t,x,0)$.} 
			
			\medskip
			
			\noindent First, the Minkowski's integral inequality implies that 
			\begin{equation}\label{M-integral}
				\begin{split}
					&\quad \, \LC \int_{-T}^T \LC \int_y^\infty \mu^{1-2s}\left| \wt u(t,x,\mu)  \right|  d\mu \RC^m dt\RC^{1/m} \\
					&\leq \int_y^\infty \mu^{1-2s} \LC  \int_{-T}^T \left| \wt u(t,x,\mu) \right|^m\, dt \RC^{1/m} d\mu \\
					&=\int_y^\infty \mu^{1-2s}\left\| \wt u (\cdot, \cdot ,\mu)\right\|_{L^m(-T,T)}\, d\mu,
				\end{split}
			\end{equation} 			
			for $m\in [1,2]$.
			By using \eqref{decay estimate in y}, we have that 
			\begin{equation}\label{initial estimate for w}
				\begin{split}
					&\quad \, \left\| \nabla_x^\ell w(\cdot,\cdot ,y) \right\|_{ L^m(-T,T; L^\infty(\R^{n}))} \\&\leq \int_y^\infty \mu^{1-2s}\left\| \nabla_x^\ell \wt u(t,x,\mu)  \right\|_{ L^m(-T,T; L^\infty(\R^{n}))} \,  d\mu \\
					&\lesssim  \norm{u}_{ L^m(-T,T; L^1(\R^{n}))} \int_y^{\infty} \mu^{1-2s-n-\ell} \, d\mu \\
					&\lesssim y^{2-2s-n-\ell}\norm{u}_{ L^m(-T,T; L^1(\R^{n}))},
				\end{split}
			\end{equation}
			for $\ell=0,1$ and $m\in [1,2]$. Here we use that $u\in \mathbf{H}^s(\R^{n+1})$ is supported in the compact set $\overline{(\Omega\cup W)_T}$ so that $\mathbf{H}^s(\R^{n+1})\subset L^m(-T,T; L^1(\R^{n}))$ for $m\in [1,2]$, and $\norm{u}_{ L^m(\R; L^1(\R^{n}))}$ is finite for $m\in [1,2]$. 
			
			Next, we want to prove that $v(t,x)=w(t,x,0)\in L^2(\R^n_T)$. To do so, our goal is to upgrade the right hand side of \eqref{initial estimate for w} to be independent of the $y$-variable. Let us write 
			\begin{equation}\label{initial estimate for w0}
				\begin{split}
					w(t,x,y)&=\int_0^\infty \mu^{1-2s} \wt u(t,x,\mu) \chi_{(y,\infty)}\,    d\mu\leq \int_0^\infty \mu^{1-2s}| \wt u(t,x,\mu)| \,  d\mu. 
				\end{split}
			\end{equation}
			Define $E(t,x):=\int_0^\infty \mu^{1-2s}| \wt u(t,x,\mu)| \,  d\mu$, then it is not hard to see that $\abs{v(t,x)}\leq E(t,x)$ for all $(t,x)\in \R^{n}_T$. Next, we want to claim $E(t,x)<\infty$. In order to get $E(t,x)<\infty$ for $(t,x)\in \R^n_T$ almost everywhere (a.e.), we will prove that $\int_{\R_T^{n}}  E(t,x)^2 \, dtdx<\infty$.
			To this end, similar to \eqref{initial estimate for w}, \eqref{M-integral} and the Fubini's theorem give rises to 
			\begin{equation}\label{estimate of w}
				\begin{split}
					\left\|  \nabla^\ell_x w(\cdot,x ,y) \right\|_{L^2(-T,T)}  
					&\lesssim  \underbrace{\int_y^\infty \int_{\R^n} \frac{\mu\norm{u(\cdot, z )}_{L^2(\R)} }{(\abs{x-z}^2 +\mu^2 )^{\frac{n+\ell}{2}+s}} \, dz d\mu}_{\text{By \eqref{comp of decay x}}}\\
					&\leq \int_{\R^n}\norm{u(\cdot, x+z )}_{L^2(\R)} \LC \int_0^\infty \frac{\mu}{(|z|^2 +\mu^2)^{\frac{n+\ell}{2}+s}}\, d\mu \RC dz \\
					& \approx \int_{\R^n}|z|^{2(1-(\frac{n+\ell}{2}+s))}\norm{u(\cdot, x+z)}_{L^2(\R)}\, dz\\
					&= \int_{B_{2R}}\frac{\norm{u(\cdot, x+z)}_{L^2(\R)}}{|z|^{n+\ell+2s-2}}\, dz ,
				\end{split}
			\end{equation}		
			where $B_{2R}$ denotes the ball in $\R^n$ of radius $2R>0$ and 
			center at the origin such that $B_{R}\supset \supp(u)$. Note that the right hand side 
			of \eqref{estimate of w} is independent of $y>0$.

			From \eqref{estimate of w}, we will have that
			\begin{equation}\label{estimate of w2}
				\begin{split}
					&\quad \,  \int_{\R^{n}_T} E(t,x)^2 \, dtdx\\
					&\lesssim \int_{B_{2R}}\int_{-T}^T E(t,x)^2 \, dtdx+\int_{\R^n\setminus B_{2R}}\int_{-T}^T E(t,x)^2dtdx\\
					&\lesssim \int_{B_{2R} }\left| \int_{\R^{n}}\frac{\norm{u(\cdot,z)}_{L^2(\R)}}{|x-z|^{n+2s-2}} \, dz \right|^2 \, dx+ \int_{\R^n\setminus B_{2R} }\left| \int_{\R^{n}}\frac{\norm{u(\cdot,z)}_{L^2(\R)}}{|x-z|^{n+2s-2}} \, dz \right|^2 \, dx\\
					&\lesssim \int_{\R^n }\left| \int_{\R^{n}}\frac{\norm{u(\cdot,z)}_{L^2(\R)}\chi_{B_{2R}}(x-z)}{|x-z|^{n+2s-2}}\, dz\right|^2 \, dx +\norm{u}_{L^2(\R^{n+1})}^2\\
					&\approx \left\|  \norm{u(t,\cdot)}_{L^2_t(\R)} \ast \LC \chi_{B_R}(\cdot)|\cdot |^{-(n+2s-2)}\RC \right\|_{L^2(\R^{n})}^2   +\norm{u}_{L^2(\R^{n+1})}^2\\
					&\lesssim \underbrace{ \left\| \chi_{B_{2R}}(\cdot)\abs{\cdot}^{-(n+2s-2)} \right\|_{L^1(\R^n)} \norm{u}_{L^2(\R^{n+1})}^2}_{\text{By \eqref{Young convolution} for }r=q=2, \ p=1}+\norm{u}_{L^2(\R^{n+1})}^2 \\
					&\lesssim \norm{u}_{L^2(\R^{n+1})}^2,
				\end{split}
			\end{equation} 
			where $R>0$ is a positive constant such that $B_R\supset \Omega'$. 
			Here we used the integrability of the function $\chi_{B_{2R}}(\cdot)\abs{\cdot}^{-(n+2s-2)}$ in the last inequality. Let us point out that the right hand side of of the estimate \eqref{estimate of w2} is independent of $y\in (0,\infty)$, then we can transfer the estimate \eqref{initial estimate for w0} of $w(t, x,y)$ to $v(t,x)$ in $L^2(\R^{n+1})$ as $y\to 0$ by the Lebesgue dominated convergence theorem.

			\medskip
			
			{\it Step 3. Gradient $L^2$-estimate.} 
			
			\medskip

			\noindent We want to show that $v(t,x)\in L^2(-T,T; H^1(\Omega'))$. To this end, let us consider the case for $s\in (0,\frac{1}{2})$, and $s\in [\frac{1}{2},1)$.
			
			\smallskip
			
			\noindent {\it Step 3a. For $s\in (0,\frac{1}{2})$.} By using \eqref{estimate of w} as $\ell=1$, then similar computations as in \eqref{estimate of w2} yield that 
			\begin{equation}
				\begin{split}
					\int_{\Omega'}\int_{-T}^T |\nabla v(t,x)|^2 \, dtdx \lesssim \left\| \chi_{B_R}(\cdot)\abs{\cdot}^{-(n+2s-1)} \right\|_{L^1(\R^n)} \norm{u}_{L^2(\R^{n+1})}^2<\infty,
				\end{split}
			\end{equation}
			since $\abs{\cdot}^{-(n+2s-1)}$ is locally integrable for $s\in (0,\frac{1}{2})$\footnote{One can see that $\int_{B_R}\abs{x}^{-(n+2s-1)}\, dx $ is bounded for any $s\in (0,\frac{1}{2})$ and $R>0$}.
			However, for the case $s\in [\frac{1}{2},1)$, the arguments in the previous step is not enough (since $\abs{\cdot}^{-(n+2s-1)}$ is not locally integrable for $s\in [\frac{1}{2},1)$), so we need more detailed analysis to find desired estimates.  
			
			\smallskip

			\noindent {\it Step 3b. For $s\in [\frac{1}{2},1)$.} We want to claim Proposition \ref{Prop: bound w} hods true in this case.			
			To show this, let $0<R_1<R$, with $\supp (u) \subset (B_{R_1})_T$ and consider a time-independent function $g=g(x)\in C^\infty_c (B_R)$ such that $0\leq g\leq 1$ and $g=1$ in $B_{R_1}$. Let $\wt g=\wt g( x,y)$ be the extension of $g$, i.e., $\wt g$ is the solution to 
			\begin{equation}
				\begin{cases}
					\nabla_{x,y}\cdot \LC y^{1-2s}\wt \sigma \nabla_{x,y} \wt g\RC =0&\text{ for }(x,y)\in \R^{n+1}_+, \\
					\wt g(x,0)=g(x)&\text{ for }x\in \R^{n},
				\end{cases}
			\end{equation}
			By \cite{ST10}, it is known that $\wt g$ can be expressed by 
			\begin{align}\label{representation formula for wt g}
				\wt g(x,y)=c_s y^{2s}\int_{\R^{n}} \int_0^\infty  e^{-\frac{y^2}{4\tau }} p(x,z,\tau)g(z)\, \frac{d\tau }{\tau^{1+s}} dz,
			\end{align}
			where $p(x,z,\tau)$ denotes the heat kernel satisfying \eqref{heat kernel} and $c_s$ is the constant given by \eqref{c_s}.

			Next, as shown in the proof of Lemma \ref{Lemma: decay estimate} and the estimate \eqref{comp of decay2}, with \eqref{representation formula for wt u} at hand, we can get 
			\begin{equation}\label{more analy for s big}
				\begin{split}
					&\quad \, \left|  \LC  \nabla w(t,x,y)-\nabla\LC  \int_y^{\infty} \mu^{1-2s}\wt g(x,\mu)\, d\mu \RC u(t,x) \RC  \right|  \\
					&=c_s\left|\int_y^{\infty} \mu\int_{\R^n}\int_0^\infty \frac{e^{-\frac{\mu^2}{4\tau}} \nabla_x p(x,z,\tau)}{\tau^{1+s}} \left[ u(t-\tau ,z) -g(z)u(t,x)\right] d\tau  dz   d\mu   \right| .
				\end{split}
			\end{equation}
			To proceed, for $y>0$, by Fubini's theorem, we find 
			\begin{equation}\label{more analy for s big 2}
				\begin{split}
					&\quad \, \int_{-T}^T\left| \nabla w(\cdot,x,y)-\nabla\LC  \int_y^{\infty} \mu^{1-2s}\wt g(x,\mu)\, d\mu \RC u(\cdot,x) \right|^2 dt  \\
					&\lesssim \int_{-T}^T \left|  \int_{y}^\infty \mu \int_{\R^n}  \int_0^\infty  \frac{e^{-\frac{\mu^2}{4\tau}} \nabla_x p(x,z,\tau)}{\tau^{1+s}} \left| u(t-\tau ,z) -g( z)u(t,x)\right| d\tau  dz d\mu\right|^2  dt    \\
					&\approx \int_{-T}^T \left|  \int_{\R^n}  \int_0^\infty \LC \int_{y}^\infty \frac{\mu}{2\tau} e^{-\frac{\mu^2}{4\tau}} \, d\mu \RC \frac{\nabla_x p(x,z,\tau)}{\tau^{s}} \left| u(t-\tau ,z) -g(z)u(t,x)\right| d\tau  dz \right|^2  dt  \\
					&\leq \underbrace{\int_{-T}^T\left| \int_{\R^n}\int_0^\infty \frac{ \left| \nabla_x p(x,z,\tau)\right|}{\tau^{s}} \left| u(t-\tau ,z) -g(z)u(t,x)\right| d\tau  dz  \right|^2 dt}_{\text{We use } \int_{y}^\infty \frac{\mu}{2\tau} e^{-\frac{\mu^2}{4\tau}} \, d\mu =e^{-\frac{y^2}{4\tau}}\leq 1, \text{ for any }\tau, y>0}\\
					&\lesssim \underbrace{\int_{-T}^T\left| \int_{B_R}\int_0^\infty \frac{ \left| \nabla_x p(x,z,\tau)\right|}{\tau^{s}} \left| u(t-\tau ,z) - u(t,x)\right| d\tau  dz  \right|^2 dt}_{\text{$\supp (u) \subset (B_{R_1})_T$ and $g(z)=1$ for $z\in B_{R_1}$}}.
				\end{split}
			\end{equation}  
			By using the triangle inequality $\abs{u(t-\tau,z)-u(t,x)}\leq \abs{u(t-\tau,z)-u(t,z)}+ \abs{u(t,z)-u(t,x)}$,
			we have 
			\begin{equation}\label{more analy for s big 3}
				\begin{split}
					\int_{-T}^T\left| \nabla w(\cdot,x,y)-\nabla\LC  \int_y^{\infty} \mu^{1-2s}\wt g(x,\mu)\, d\mu \RC u(\cdot,x) \right|^2 dt  \lesssim J_1 + J_2,
				\end{split}
			\end{equation}       
			where 
			\begin{equation}
				\begin{split}
					J_1&:=\int_{-T}^T\left| \int_{B_R}\int_0^\infty \frac{ \left| \nabla_x p(x,z,\tau)\right|}{\tau^{s}} \left| u(t-\tau ,z) - u(t,z)\right| d\tau  dz  \right|^2 dt, \\
					J_2&:=\int_{-T}^T\left| \int_{B_R}\int_0^\infty \frac{ \left| \nabla_x p(x,z,\tau)\right|}{\tau^{s}} \left| u(t ,z) - u(t,x)\right| d\tau  dz  \right|^2 dt.
				\end{split}
			\end{equation}
			Our remaining task is to estimate $J_1$ and $J_2$.
			
			\smallskip

			{\it Step 3b-1. Estimate for $J_1$}: We consider the quantity $\LC \int_{\Omega'} J_1\, dx\RC^{1/2}$. By Minkowski's integral inequality 
			\begin{equation}
				\begin{split}
					&\quad \,	\LC \int_{\Omega'}\int_{-T}^T\left| \int_{B_R}\int_0^\infty \frac{ \left| \nabla_x p(x,z,\tau)\right|}{\tau^{s}} \left| u(t-\tau ,z) - u(t,z)\right| d\tau  dz \right|^2 dtdx\RC^{1/2} \\
					&\lesssim \int_{B_R} \LC \int_{-T}^T\int_{\Omega'}\left|\int_0^\infty \frac{ \left| \nabla_x p(x,z,\tau)\right|}{\tau^{s}} \left|u(t-\tau ,z) - u(t,z)\right| d\tau \right|^2 \,  dxdt\RC^{1/2} dz.
				\end{split} 
			\end{equation}
			For the integrand in the preceding inequalities, direct computations yield that 
			\begin{equation}
				\begin{split}
					&\quad \, \int_{-T}^T\int_{\Omega'}\left|\int_0^\infty \frac{ \left| \nabla_x p(x,z,\tau)\right|}{\tau^{s}} \left|u(t-\tau ,z) - u(t,z)\right| d\tau\right|^2 \,  dxdt\\
					& =\int_{-T}^T \int_{\Omega'}\LC \int_{0}^\infty \frac{ \left| \nabla_x p(x,z,\tau)\right|}{\tau^{\frac{s-1}{2}}} \frac{\abs{u(t-\tau ,z) - u(t,z)}}{\tau^{\frac{1+s}{2}}}\, d\tau \RC^2 \,  dx   dt \\
					&\leq \int_{-T}^T\int_{\Omega'} \LC\int_0^\infty \frac{ \left| \nabla_x p(x,z,\tau)\right|^2}{\tau^{s-1}} \, d\tau \RC \LC \int_0^\infty \frac{\abs{u(t-\tau ,z) - u(t,z)}^2}{\tau^{1+s}}\, d\tau \RC dx dt \\
					&\lesssim \int_{-T}^T\int_{\Omega'}  \underbrace{\LC \int_0^\infty  e^{-c\frac{\abs{x-z}^2}{\tau}}\tau^{-(n+s)}\, d\tau \RC}_{\text{By \eqref{heat kernel estimate} as $\ell=1$}}  \LC \int_0^\infty \frac{\abs{u(t-\tau ,z) - u(t,z)}^2}{\tau^{1+s}}\, d\tau \RC dx dt\\
					&\lesssim \int_{\Omega'} \left\{ \frac{1}{\abs{x-z}^{2(n+s-1)}}\LC \int_{-T}^T\int_0^\infty \frac{\abs{u(t-\tau ,z) - u(t,z)}^2}{\tau^{1+s}}\, d\tau dt\RC \right\} dx \\
					&\lesssim \LC \int_{\Omega'}  \frac{1}{\abs{x-z}^{2(n+s-1)}}dx \RC \norm{u(\cdot, z)}_{H^{s/2}(\R)}^2.
				\end{split}
			\end{equation}
			Thus, we obtain that 
			\begin{equation}\label{J_1 estimate}
				\begin{split}
					&\quad \, \LC \int_{\Omega'}J_1\, dx\RC^{1/2}\\
					&\lesssim \int_{B_R} \LC \int_{-T}^T\int_{\Omega'}\left|\int_0^\infty \frac{ \left| \nabla_x p(x,z,\tau)\right|}{\tau^{s}} \left|u(t-\tau ,z) - u(t,z)\right| d\tau\right|^2 \,  dxdt\RC^{1/2} dz\\
					&\leq \int_{B_R} \left\{\LC \int_{\Omega'}  \frac{1}{\abs{x-z}^{2(n+s-1)}}dx\RC ^{1/2}\norm{u(\cdot, z)}_{H^{s/2}(\R)} \right\} dz\\
					&\leq \underbrace{\LC \int_{B_R}\int_{\Omega'}  \frac{1}{\abs{x-z}^{2(n+s-1)}}dxdz\RC^{1/2} \LC \int_{B_R}\norm{u(\cdot, z)}_{H^{s/2}(\R)}^2\, dz\RC^{1/2}}_{\text{By H\"older's inequality}}\\
					&\lesssim \LC \int_{B_R}\norm{u(\cdot, z)}_{H^{s/2}(\R)}^2dz\RC^{1/2}\\
					&\leq \norm{u}_{\mathbf{H}^s(\R^{n+1})},
				\end{split}
			\end{equation}
			where we used the fact  
			$$ 
			\int_{B_R}\int_{\Omega'}  \frac{1}{\abs{x-z}^{2(n+s-1)}}dxdz <\infty
			$$ 
			in the above computations.

			\smallskip
			
			{\it Step 3b-2. Estimate for $J_2$}: By a straightforward calculation, we can have that
			\begin{equation}
				\begin{split}
					&\quad \, \int_{-T}^T\left| \int_{B_R}\int_0^\infty \frac{ \left| \nabla_x p(x,z,\tau)\right|}{\tau^{s}} \left| u(t ,z) - u(t,x)\right| d\tau  dz  \right|^2 \, dt \\
					&\lesssim \int_{-T}^T \underbrace{\left| \int_{B_R} \LC \int_0^\infty e^{-c\frac{\abs{x-z}^2}{\tau}} \tau^{-(\frac{n+1}{2}+s)} \, d\tau \RC\left| u(t ,z) - u(t,x)\right|  dz \right|^2}_{\text{By \eqref{heat kernel estimate} as $k=1$}} dt \\
					&\lesssim \underbrace{\int_{-T}^T \left| \int_{B_R}\frac{\left| u(t ,z) - u(t,x)\right| }{\abs{x-z}^{n+2s-1}} \, dz  \right|^2 \, dt}_{\text{By \eqref{comp1 in appendix}}} \\
					& \leq \int_{-T}^T \LC \int_{B_R}\frac{\left| u(t ,z) - u(t,x)\right| ^2}{\abs{x-z}^{n+2s}} \, dz \RC  \underbrace{\LC \int_{B_R}\frac{1}{\abs{x-z}^{n+2s-2}}\, dz \RC }_{\text{This is bounded}}dt \\
					&\lesssim \int_{-T}^T \int_{B_R}\frac{\left| u(t ,z) - u(t,x)\right| ^2}{\abs{x-z}^{n+2s}} \, dz dt.
				\end{split}
			\end{equation}
			Therefore, 
			\begin{equation}\label{J_2 estimate}
				\begin{split}
					&\quad \,  \LC \int_{\Omega'} J_2 \, dx\RC^{1/2}\\
					&\lesssim \LC \int_{\Omega'_T}\left| \int_{B_R}\int_0^\infty \frac{ \left| \nabla_x p(x,z,\tau)\right|}{\tau^{s}} \left| u(t ,z) - u(t,x)\right| d\tau  dz  \right|^2\,  dtdx\RC^{1/2} \\
					&\lesssim \LC \int_{-T}^T \int_{\Omega'} \int_{B_R}\frac{\left| u(t ,z) - u(t,x)\right| ^2}{\abs{x-z}^{n+2s}} \, dz dx dt \RC^{1/2} \\
					&\leq \LC \int_{-T}^T \norm{u(t,\cdot)}^2_{H^s(\R^n)} \, dt\RC^{1/2}\\
					&\lesssim \norm{u}_{\mathbf{H}^s(\R^{n+1})},
				\end{split}
			\end{equation}
			where we used $H^s(\R^n)=W^{s,2}(\R^n)$ denotes the fractional Sobolev space of order $s$, which is characterized in \cite[Section 3]{ML-strongly-elliptic-systems} for instance. Therefore, combined with \eqref{more analy for s big 2}, \eqref{J_1 estimate} and \eqref{J_2 estimate}, we can conclude that 
			\begin{equation}\label{J_1+J_2 estimate}
				\left\|  \LC  \nabla w(t,x,y)-\nabla\LC  \int_y^{\infty} \mu^{1-2s}\wt g(x,\mu)\, d\mu \RC u(t,x) \RC  \right\|_{L^2(\Omega'_T)}  \lesssim \norm{u}_{\mathbf{H}^s(\R^{n+1})}<\infty,
			\end{equation}
			for any bounded open set $\Omega'\subset \R^n$ and for any $y>0$ as we want.
			Note that the upper bound of \eqref{J_1+J_2 estimate} is independent of $y>0$.
			
			The goal is to estimate $\left\| \nabla w(\cdot,\cdot,0) \right\|_{L^2(\Omega'_T)}$. To this end, we observe that 
			\begin{equation}
				\begin{split}
					&\quad \, \left\| \nabla\LC  \int_0^{\infty} \mu^{1-2s}\wt g(x,\mu)\, d\mu  \RC u(t,x) \right\|_{L^2(\Omega'_T)} \\
					&\lesssim \norm{u}_{L^2(\Omega'_T)}\left\| \nabla\LC  \int_0^{\infty} \mu^{1-2s}\wt g(x,\mu)\, d\mu \RC \right\|_{L^\infty(\Omega')}
				\end{split}
			\end{equation}
			by the H\"older's inequality.
			Note that the function $\int_0^{\infty} \mu^{1-2s}\wt g(x,\mu)\, d\mu $ is time-independent, and 
			\begin{equation}\label{bound for g}
				\left\| \nabla\LC  \int_0^{\infty} \mu^{1-2s}\wt g(x,z)\, d\mu \RC \right\|_{L^\infty(\Omega')}<\infty
			\end{equation}
			as shown in the proof of \cite[Proposition 6.1]{CGRU2023reduction}. In particular, we will prove that \eqref{bound for g} for $n\geq 2$ in the next step. Hence, with these estimates at hand, we can obtain that 
			\begin{equation}\label{gradient estimate of w(t,x,0)}
				\begin{split}
					&\quad \, \left\| \nabla w(\cdot,\cdot,0) \right\|_{L^2(\Omega'_T)} \\
					&\leq  \lim_{y\to0}	\left\|  \LC  \nabla w(t,x,y)-\nabla\LC  \int_y^{\infty} \mu^{1-2s}\wt g(x,\mu)\, d\mu \RC u(t,x) \RC  \right\|_{L^2(\Omega'_T)} \\
					&\quad \,+ \left\| \nabla\LC  \int_0^{\infty} \mu^{1-2s}\wt g(x,\mu)\, d\mu \RC u(t,x) \right\|_{L^2(\Omega'_T)}  \\
					&\lesssim \norm{u}_{\mathbf{H}^s(\R^{n+1})}<\infty,
				\end{split}
			\end{equation}
			which proves the desired estimate.
			
			\medskip
			
			{\it Step 4. Auxiliary estimate.}
			
			\medskip
			
			\noindent Let us explain that \eqref{bound for g} holds for $s\in [\frac{1}{2},1)$ and for all $n\geq 2$ for the sake of self-containedness. To this end, our goal is to prove
			\begin{equation}
				\left| \nabla_x \int_0^\infty \int_{\R^n} \int_0^\infty p(x,z,\tau) \mu e^{-\frac{\mu^2}{4\tau}} \, \frac{d\tau}{\tau^{1+s}} g(z)\, dzdy \right|<\infty,
			\end{equation}
			which is equivalent to show 
			\begin{equation}\label{g estimate gradient}
				\left| \nabla_x \int_{B_R} g(z)  \int_0^\infty \tau^{-s}p(x,z,\tau)\, d\tau dz \right|<\infty,
			\end{equation}
			where we used $\left| \int_0^\infty \frac{\mu}{2\tau}e^{-\frac{\mu^2}{4\tau}}\, d\mu \right|\leq 1$.
			To this end, we investigate 
			\begin{equation}\label{g estimate}
				\begin{split}
					&\quad \, -\int_{B_R} g(z)\int_\eps^\infty \tau^{-s}p(x,z,\tau)\, d\tau dz \\
					& \approx -\int_{B_R}g(z)\int_{\eps}^\infty \p_\tau \LC \tau^{1-s}\RC p(x,z,\tau)\, d\tau dz \\
					&=\eps^{1-s}\int_{B_R}g(z)p(x,z,\eps)\, dz -\underbrace{\int_{B_R}g(z)\lim_{\tau \to \infty} \tau^{1-s}p(x,z,\tau)\, dz}_{(\ast)} \\
					&\quad \, +\int_{B_R} g(z) \int_{\eps}^\infty \tau^{1-s}\p_\tau p(x,z,\tau)\, d\tau dz \\
					&:= L_1 + L_2 ,
				\end{split}
			\end{equation}
			where
			\begin{equation}
				\begin{split}
					L_1&:=\eps^{1-s}\int_{B_R}g(z)p(x,z,\eps)\, dz, \\
					L_2&:= \int_{B_R} g(z) \int_{\eps}^\infty \tau^{1-s}\p_\tau p(x,z,\tau)\, d\tau dz.
				\end{split}
			\end{equation}
			Note that $(\ast)=0$ in \eqref{g estimate} can be observed by 
			\begin{equation}
				\left| \lim_{\tau \to \infty} \tau^{1-s}p(x,z,\tau) \right| \lesssim \lim_{\tau \to \infty} \tau^{1-s-\frac{n}{2}}e^{-c\frac{\abs{x-z}^2}{\tau}}\leq \lim_{\tau \to \infty}\tau^{-s}=0,
			\end{equation}
			for any $n\geq 2$.
			
			For $L_1$, we first note that 
			\begin{equation}
				\int_{B_R}g(z)p(x,z,\eps)\, dz \to g(x) \text{ as }\eps\to 0,
			\end{equation}
			and hence
			\begin{equation}
				\eps^{1-s}\int_{B_R}g(z)p(x,z,\eps)\, dz \to 0 \text{ as }\eps\to 0, \text{ for }s\in [\frac{1}{2},1).
			\end{equation}
			For $L_2$, by using the equation of the heat kernel $p(x,z,\tau)$, one can see that 
			\begin{equation}\label{g estimate L1}
				\begin{split}
					&\quad \, \int_{B_R} g(z)\int_\eps^\infty \tau^{1-s} \p_\tau p(x,z,\tau)\, d\tau dz \\
					&=\int_{B_R} g(z)\int_\eps^\infty \tau^{1-s}\LC \nabla \cdot \sigma(z) \nabla p(x,z,\tau) \RC d\tau dz \\
					&=\int_{B_R} g(z)\LC \nabla \cdot \sigma(z) \nabla \int_{\eps}^\infty  \tau^{1-s}p(x,z,\tau)\RC d\tau dz \\
					&=\underbrace{\int_{B_R\setminus B_{R_1}}\LC \nabla \cdot \sigma(z) \nabla g(z)\RC  \int_{\eps}^\infty  \tau^{1-s}p(x,z,\tau)\,  d\tau dz}_{\text{By integration by part and $g(z)=1$ in $B_{R_1}$}}.
				\end{split}
			\end{equation}
			Moreover, applying another heat kernel estimate $\left| \p_\tau p(x,z,\tau) \right| \lesssim \frac{1}{\tau^{\frac{n}{2}+1}}e^{-c\frac{\abs{x-z}^2}{\tau}}$ from \cite[equation (0.6)]{Gri95}, the integrand in the left hand side of \eqref{g estimate L1} has an upper bound that 
			\begin{equation}
				\begin{split}
					\tau^{1-s} \left| \p_\tau p(x,z,\tau) \right| \lesssim \frac{1}{\tau^{\frac{n}{2}+1}}e^{-c\frac{\abs{x-z}^2}{\tau}}\leq \frac{1}{\tau^{\frac{n}{2}+1}},
				\end{split}
			\end{equation}
			which is integrable for $\tau>\eps$.
			
			In addition, we also observe that for a.e. $z\in B_R$, then there holds 
			\begin{equation}
				\begin{split}
					g(z)\left| \int_{\eps}^\infty \tau^{-s}p(x,z,\tau) \, d\tau  \right| &\lesssim  g(z)\int_\eps^\infty \tau^{1-s-\frac{n}{2}} e^{-c\frac{\abs{x-z}^2}{\tau}}\, d\tau \lesssim \frac{g(z)}{\abs{x-z}^{n+2s-2}}, 
				\end{split}
			\end{equation}
			and 
			\begin{equation}
				\begin{split}
					\abs{\nabla \cdot \sigma(z) \nabla g(z)}\left| \int_\eps^\infty \tau^{1-s}p(x,z,\tau) \, d\tau \right| & \lesssim \abs{\nabla \cdot \sigma(z) \nabla g(z)} \int_{\eps}^\infty \tau^{1-s-\frac{n}{2}} e^{-c\frac{\abs{x-z}^2}{\tau}}\, d\tau \\
					&\lesssim \frac{\abs{\nabla \cdot \sigma(z) \nabla g(z)}}{\abs{x-z}^{n+2s-4}},
				\end{split}
			\end{equation}
			where the right hand sides of the above bounds are integrable functions of $z$ due to the support and smooth conditions of $g$. Hence, back to the relation \eqref{g estimate}, the Lebesgue dominated convergence theorem yields that 
			\begin{equation}
				\begin{split}
					&\quad \, \int_{B_R} g(z)\int_0^\infty \tau^{-s}p(x,z,\tau) \, d\tau dz \\
					&\approx \int_{B_R\setminus B_{R_1}} \nabla \cdot \sigma(z) \nabla g(z) \int_0^\infty \tau^{1-s}p(x,z,\tau)\, d\tau dz.
				\end{split}
			\end{equation}
			With preceding arguments at hand, we now study \eqref{g estimate gradient}. Let us note that there exists a $\delta>0$ such that $\abs{x-z}>\delta$ for all $z\in B_R\setminus B_{R_1}$. Therefore, 
			\begin{equation}
				\begin{split}
					\left|  \LC \nabla \cdot \sigma(z)\nabla g(z) \RC \tau^{1-s}\nabla_xp(x,z,\tau) \right| &\lesssim  \abs{\nabla \cdot \sigma(z) \nabla g(z)} \tau^{-s-\frac{n}{2}} \abs{x-z}e^{-c\frac{\abs{x-z}^2}{\tau}} \\
					&\lesssim \abs{\nabla \cdot \sigma (z) \nabla g(z)} \tau^{-s-\frac{n}{2}} \abs{x-z}e^{-c\frac{\delta^2}{\tau}} ,
				\end{split}
			\end{equation}
			and 
			\begin{equation}
				\int_{B_R\setminus B_{R_1}} \int_0^\infty \abs{\nabla \cdot \sigma\nabla g} \tau^{-s-\frac{n}{2}} e^{-c\frac{\delta^2}{\tau}}  \lesssim \int_{B_R\setminus B_{R_1}} \abs{\nabla \cdot \sigma \nabla g } \, dz <\infty.
			\end{equation}
			As a result, it is not hard to see \eqref{g estimate gradient} is bounded as we want.
			
			\medskip
			
			{\it Step 5. Exterior $H^1$ estimate.}
			
			\medskip
			
			\noindent By using the compact support condition of $u\in \mathbf{H}^s(\R^{n+1})$, the Minkowski's integral inequality yields that 
			\begin{equation}\label{estimate u and v2}
				\begin{split}
					\left\| \nabla^\ell v (\cdot, x ) \right\|_{L^2(-T,T)} 
					&\leq  \LC \int_{-T}^T \LC \int_0^\infty y^{1-2s}\left|\nabla^\ell \wt u (\cdot ,x, y) \right| \, dy \RC^2 \, dt\RC^{1/2} \\
					& \leq  \int_0^\infty y^{1-2s} \left\| \nabla^\ell \wt u (\cdot ,x,y) \right\|_{L^2(-T,T)}\, dy  \\
					&\lesssim \underbrace{\int_{B_R}\int_0^\infty \frac{y\norm{u(\cdot,z)}_{L^2(\R)}}{(\abs{x-z}^2 +y^2)^{\frac{n+\ell}{2}+s}}\, dydz}_{\text{By \eqref{comp of decay x} and support of $u$}} \\
					&\lesssim \int_{B_R} \frac{\norm{u(\cdot,z)}_{L^2(\R)}}{\abs{x-z}^{n+\ell+2s-2}}\, dz.
				\end{split}
			\end{equation}
			Therefore, we can obtain 
			\begin{equation}\label{estimate u and v3}
				\begin{split}
					\left\| \nabla^\ell v \right\|^2_{L^2((\R^n\setminus B_{2R})_T)} 
					\lesssim \int_{\R^n\setminus B_{2R}} \LC  \int_{B_R} \frac{\norm{u(\cdot,z)}_{L^2(\R)}}{\abs{x-z}^{n+\ell+2s-2}}\, dz \RC^2 \, dx \lesssim \norm{u}_{L^2(\R^{n+1})}^2,
				\end{split}
			\end{equation}
			for $\ell=0,1$, where we used the fact that $x\in \R^n \setminus B_{2R}$ and $z\in B_R$, so that $\abs{x-z}\geq R>0$.

			\medskip
			
			{\it Step 6. Conclusion.}
			
			\medskip
			
			\noindent With the local estimate \eqref{gradient estimate of w(t,x,0)} (replacing $\Omega'$ by $B_{2R}$) at hand, we also have 
			\begin{equation}\label{estimate u and v1}
				\left\| \nabla v \right\|_{L^2(-T,T;H^1(B_{2R}))}\lesssim \norm{u}_{\mathbf{H}^s(\R^{n+1})}. 
			\end{equation}
			We can obtain the desired estimate \eqref{estimate u and v} by combining \eqref{estimate u and v2} and \eqref{estimate u and v1}. This proves the assertion of Proposition \ref{Prop: bound w}.			
		\end{proof}

		\section{The key equation}
		
		With rigorous analysis in Section \ref{sec: regularity} at hand, we can obtain the next result, which also makes the computations shown in Section \ref{subsec: duality} rigorously. On the other hand, the equation of $v$ plays a key role to prove Theorem \ref{Thm: main}.

		\begin{lemma}\label{Lemma: weak formulation}
			Given $s\in (0,1)$ and $n\in \N$, let $\wt \sigma \in C^2(\R^n;\R^{(n+1)\times (n+1)})$ be of the form \eqref{tilde sigma}. Let $\wt u\in \mathcal{L}^{1,2}(\R^{n+2}_+; y^{1-2s}dtdxdy)$ be the extension of $u\in \mathbf{H}^s(\R^{n+1})$ (see \eqref{degen para PDE}) such that $\overline{\supp  (u)}\subset \R^{n+1}$ is compact. Assume that $v\in L^2(-T,T;H^1(\Omega'))$ with $\p_t v\in L^2(-T,T; H^{-1}(\Omega'))$ for some bounded open Lipschitz set $\Omega'\subset \R^n$, where $v$ is given by \eqref{the function v}. Then $v$ is a weak solution to 
			\begin{align}
				\LC  \p_t -\nabla \cdot \sigma\nabla \RC v= \LC \p _t -\nabla \sigma \nabla \RC^s u \text{ in }\Omega'_T,
			\end{align}
			in the weak sense 
			\begin{align}\label{weak form}
				\int_{\Omega'_T} \LC  v \p_t \varphi - \sigma(x)\nabla v \cdot \nabla \varphi \RC dtdx = \int_{\R^{n+1}}\varphi \lim_{y\to 0}z^{1-2s} \p_y \wt u(t,x,y)\, dtdx, 
			\end{align}
			for any test function $\varphi \in C^\infty_c(\Omega'_T)$.
		\end{lemma}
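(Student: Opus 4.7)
The plan is to rigorize the formal chain of identities from equation \eqref{para v} by testing the extension equation \eqref{degen para PDE} against a test function that is constant in the extension variable $y$, and then passing to limits in both $y\to 0$ and $y\to \infty$. More precisely, fix $\varphi \in C^\infty_c(\Omega'_T)$, and for $0<y_0<R<\infty$, multiply the equation
\[
y^{1-2s}\p_t \wt u -\nabla_{x,y}\cdot \LC y^{1-2s}\wt\sigma(x)\nabla_{x,y}\wt u\RC=0
\]
by $\varphi(t,x)$ (viewed as a function independent of $y$) and integrate over the strip $\R^{n+1}\times (y_0,R)$. Since $\wt \sigma$ is block diagonal, the $x$-divergence and $y$-divergence decouple, and since $\partial_y\varphi=0$, the integration by parts in $y$ produces only boundary terms at $y=y_0$ and $y=R$, while integration by parts in $t$ and in $x$ produce the bulk terms involving $v$.

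Concretely, I would arrive at the identity
\[
-\int_{\R^{n+1}}\!\int_{y_0}^{R}\! y^{1-2s}\wt u\,\p_t\varphi\, dy\,dtdx +\int_{\R^{n+1}}\!\int_{y_0}^{R}\! y^{1-2s}\sigma\nabla_x\wt u\cdot\nabla_x\varphi\, dy\,dtdx
\]
\[
=\int_{\R^{n+1}}\!\LC y_0^{1-2s}\p_y\wt u\RC\!\big|_{y=y_0}\,\varphi\,dtdx -\int_{\R^{n+1}}\!\LC R^{1-2s}\p_y\wt u\RC\!\big|_{y=R}\,\varphi\,dtdx.
\]
Next, I would pass $y_0\to 0$ and $R\to\infty$. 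The first two integrals converge, by Fubini's theorem and the definition \eqref{the function v} of $v$, to $-\int_{\Omega'_T} v\,\p_t\varphi\,dtdx$ and $\int_{\Omega'_T}\sigma\nabla v\cdot\nabla\varphi\, dtdx$ respectively; the absolute integrability needed for Fubini is supplied by Proposition \ref{Prop: bound w}, which guarantees $v\in L^2(-T,T;H^1(\Omega'))$, together with the Lemma \ref{Lemma: decay estimate} estimates on $\wt u$ and $\nabla_x\wt u$ paired with the compact $x$-support of $\varphi$. The boundary term at $y=y_0$ converges to $\int_{\R^{n+1}}\varphi \lim_{y\to 0}y^{1-2s}\p_y\wt u\,dtdx$ by Proposition \ref{Prop: extension}\ref{item b prop extension}, which establishes the limit in $\mathbf{H}^{-s}(\R^{n+1})$ and therefore in the dual pairing with $\varphi\in C^\infty_c(\Omega'_T)\subset \widetilde{\mathbf{H}}^s(\R^{n+1})$.

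The main obstacle is twofold. First, verifying that the boundary term at $y=R$ vanishes as $R\to\infty$; this requires pointwise decay of $y^{1-2s}\p_y\wt u$ in $y$ for $y$ large, which can be extracted directly from the explicit representation \eqref{wt u expression}--\eqref{kernel for extension} for $\wt u$ in terms of the heat kernel $p(x,z,\tau)$, since the factor $y^{2s}e^{-y^2/(4\tau)}$ forces decay like $y^{-n-1}\norm{u}_{L^1}$ after optimizing in $\tau$, much as in Lemma \ref{Lemma: decay estimate}\ref{item c decay y}, and $R$-integration of $\varphi$ against this quantity tends to zero. Second, since $\wt u$ is only known to lie in $\mathcal{L}^{1,2}(\R^{n+2}_+;y^{1-2s}dtdxdy)$, the initial integration by parts must be justified by standard density/approximation arguments: approximate $\wt u$ by smooth functions within the weighted energy space, or equivalently interpret the equation \eqref{degen para PDE} in its natural weak formulation tested against $\varphi(t,x)\eta_{y_0,R}(y)$ for a smooth cutoff $\eta_{y_0,R}$ equal to $1$ on $(y_0,R)$, and then let the cutoff approach $\chi_{(0,\infty)}$.

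Combining the three limits rearranges to
\[
\int_{\Omega'_T}\LC v\,\p_t\varphi -\sigma\nabla v\cdot\nabla\varphi\RC dtdx =\int_{\R^{n+1}}\varphi\lim_{y\to 0} y^{1-2s}\p_y\wt u\,dtdx,
\]
which is precisely \eqref{weak form}. This coincides with the formal derivation already displayed in \eqref{para v}, but now with every interchange of limits and integrations justified by Proposition \ref{Prop: bound w}, Lemma \ref{Lemma: decay estimate}, and Proposition \ref{Prop: extension}.
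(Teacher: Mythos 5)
Your plan coincides with the paper's proof in all essentials: both test the extension equation \eqref{degen para PDE} against the $y$-independent test function $\varphi$ multiplied by a cutoff in the extension variable, integrate by parts (with $\p_y\varphi=0$ killing the $y$-bulk term), and pass to the limit using the decay estimates of Lemma \ref{Lemma: decay estimate} for the far boundary and Proposition \ref{Prop: extension}\ref{item b prop extension} for the near one. The paper's cutoff $\eta_k$ is $\equiv 1$ near $y=0$ and supported in $[0,2k]$, whereas you use a strip $(y_0,R)$; these are interchangeable, and you correctly note that the integration by parts should ultimately be justified via the weak formulation against a smooth cutoff. The only slip is a sign in the intermediate identity. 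Writing the PDE as $y^{1-2s}\p_t\wt u - \nabla_x\cdot(y^{1-2s}\sigma\nabla_x\wt u) - \p_y(y^{1-2s}\p_y\wt u)=0$, testing against $\varphi$ over $\R^{n+1}\times(y_0,R)$, and integrating by parts gives
\begin{equation}
-\int\!\!\int_{y_0}^{R} y^{1-2s}\wt u\,\p_t\varphi\,dydtdx +\int\!\!\int_{y_0}^{R} y^{1-2s}\sigma\nabla_x\wt u\cdot\nabla_x\varphi\,dydtdx = \int \LC R^{1-2s}\p_y\wt u\RC\big|_{R}\varphi\,dtdx - \int \LC y_0^{1-2s}\p_y\wt u\RC\big|_{y_0}\varphi\,dtdx,
\end{equation}
which is the opposite of what you wrote. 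As written, taking $y_0\to0$, $R\to\infty$ in your identity would produce \eqref{weak form} with an extra minus sign on the right-hand side. With the sign corrected, the two limits (decay of the $R$-term as established in your discussion, convergence of the $y_0$-term in the $\mathbf{H}^{-s}$--$\mathbf{H}^s$ duality against $\varphi$) yield \eqref{weak form} exactly, matching the paper's conclusion.
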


		\begin{remark}
			The right hand side in \eqref{weak form} 
			\begin{align}
				\int_{\R^{n+1}}\varphi \lim_{y\to 0}y^{1-2s} \p_y \wt u(t,x,y)\, dtdx
			\end{align}
			is understood as $\mathbf{H}^s(\R^{n+1})$-$\mathbf{H}^{-s}(\R^{n+1})$ duality pairing, which is well-defined since the limit $\lim_{y\to 0} y^{1-2s} \p_y \wt u(t,x,y)\, dxdt \in \mathbf{H}^{-s}(\R^{n+1})$ due to Proposition \ref{Prop: extension} as $u\in \mathbf{H}^s(\R^{n+1})$.
		\end{remark}

		\begin{proof}[Proof of Lemma \ref{Lemma: weak formulation}]
			Since $v$ is given by \eqref{the function v}, we have 
			\begin{align}\label{est 1}
				\begin{split}
					&\quad \, \int_{\Omega'_T} \LC  v \p_t \varphi - \sigma(x)\nabla v \cdot \nabla \varphi \RC dtdx\\
					&=	\int_{\R^{n+1}} \LC  \LC \int_0^\infty y^{1-2s}\wt u \, dy\RC \p_t \varphi - \sigma(x) \nabla  \LC \int_0^\infty y^{1-2s}\wt u \, dy\RC\cdot \nabla \varphi \RC dtdx \\
					&=\lim_{k\to \infty}	\int_{\R^{n+1}}  \LC \int_0^\infty y^{1-2s}\wt u \eta_k (y) \, dy\RC \p_t \varphi \, dtdx \\
					&\quad \, - \lim_{k\to \infty}\int_{\R^{n+1}}\sigma(x)\LC \nabla   \int_0^\infty y^{1-2s}\wt u \eta_k (y)   \, dy\RC\cdot \nabla \varphi \,  dxdt,
				\end{split}
			\end{align}
			for any test function $\varphi\in C^\infty_c([-T,T]\times \Omega')$. 
			Here $\eta_k(y)=\eta(y/k)$, where $\eta: [0,\infty) \to \R$ is a smooth function fulfilling $\eta(y)=\begin{cases}
				1 &\text{ for }0\leq y<1\\
				0 &\text{ for }y\geq 2
			\end{cases}$.
			Notice that the convergences \eqref{est 1} follow from the fact that 
			\begin{equation}
				\begin{split}
					\left\|   \left|\int_0^\infty y^{1-2s}\wt u \LC 1-\eta_k (y) \RC  dy \right|\, |\p_t \varphi|  \right\|_{L^1(\Omega'_T)} 
					&\lesssim     \int_k^\infty y^{1-2s}\norm{\wt u}_{L^1(\R; L^\infty(\R^n))} \, dy   \\
					&\lesssim \underbrace{  \int_k^\infty y^{1-2s}y^{-n} \norm{u}_{L^1(\R; L^1(\R^n))}\,   dy  }_{\text{By \eqref{decay estimate in y} as $m=q=1$ and $r=p=\infty$}} \\
					&\leq k^{2-2s-n}\norm{u}_{L^1(\R; L^1(\R^n))} \\
					&\lesssim k^{2-2s-n}\norm{u}_{L^2(\R^{n+1})},
				\end{split}
			\end{equation}
			where we used that the support condition $\overline{\supp(u)}$ is compact.
			Similarly, we can also deduce
			\begin{equation}
				\begin{split}
					&\quad \,	\left\|  \sigma(x) \nabla  \LC \int_0^\infty y^{1-2s}\wt u\LC 1-\eta_k (y) \RC \, dy\RC\cdot \nabla \varphi  \right\|_{L^1(\Omega'_T)}\\ 
					&\lesssim     \int_k^\infty y^{1-2s}\norm{\nabla\wt u}_{L^1(\R; L^\infty(\R^n))} \, dy   \\
					&\lesssim  \int_k^\infty y^{1-2s}y^{-n-1} \norm{u}_{L^1(\R; L^1(\R^n))}\,   dy  \\
					&\leq k^{1-2s-n}\norm{u}_{L^1(\R; L^1(\R^n))} \\
					&\lesssim k^{1-2s-n}\norm{u}_{L^2(\R^{n+1})}.
				\end{split}
			\end{equation}
			Therefore, by the preceding estimates, one can get the convergence 
			\begin{align}
				\begin{split}
					\int_0^\infty y^{1-2s}\wt u \eta_k (y) \, dy & \to   \int_0^\infty y^{1-2s}\wt u  \, dy, \\
					\nabla  \int_0^\infty y^{1-2s}\wt u \eta_k (y) \, dy & \to \nabla  \int_0^\infty y^{1-2s}\wt u  \, dy, 
				\end{split}
			\end{align}
			in $L^2(\Omega'_T)$ as $k\to \infty$.

			Using the regularity of $\wt u \in \mathcal{L}^{1,2}(\R^{n+2}_+; y^{1-2s}dtdxdy)$ and considering the difference quotient, Lebesgue dominated convergence theorem and Fubini's theorem imply that 
			\begin{equation}
				\begin{split}
					&\quad \, \int_{\R^{n+1}}\sigma \nabla \LC \int_0^\infty y^{1-2s}\wt u(t,x,y)\eta_k(y) \, dy \RC \cdot \nabla \varphi \, dtdx\\
					&=\int_0^\infty \int_{\R^{n+1}}  y^{1-2s} \sigma \nabla \wt u(t,x,y) \cdot \nabla \LC \varphi (t,x) \eta_k(y) \RC  dtdxdy .
				\end{split}
			\end{equation}
			Since $\wt u$ is a solution to \eqref{degen para PDE}, by taking limit as $k\to \infty$ of the preceding equality, an integration by parts yields that 
			\begin{equation}
				\begin{split}
					&\quad \, \lim_{k\to \infty} \int_0^\infty \int_{\R^{n+1}}  y^{1-2s} \sigma \nabla \wt u(t,x,y) \cdot \nabla \LC \varphi (t,x) \eta_k(y) \RC  dtdxdy \\
					&= -\lim_{k\to \infty} \int_0^\infty \int_{\R^{n+1}}\nabla \cdot\LC y^{1-2s} \sigma \nabla \wt u(t,x,y)\RC \LC \varphi (t,x) \eta_k(y) \RC dtdxdy \\
					&= \lim_{k\to \infty} \int_0^\infty\int_{\R^{n+1}} \p_y\LC y^{1-2s}\p_y \wt u(t,x,y) \RC \LC \varphi (t,x) \eta_k(y) \RC dtdxdy \\
					& \quad \, \,  -  \lim_{k\to \infty} \int_0^\infty\int_{\R^{n+1}} y^{1-2s}\p_t \wt u(t,x,y) \LC \varphi (t,x) \eta_k(y) \RC dtdxdy\\
					&= -\lim_{k\to \infty} \int_0^\infty\int_{\R^{n+1}} y^{1-2s}\p_y \wt u(t,x,y) \p_y \LC \varphi (t,x) \eta_k(y) \RC dtdxdy \\
					& \quad \, \,  - \int_{\R^{n+1}}\lim_{y\to0}y^{1-2s}\p_y \wt u(t,x,y)\varphi(t,x)\, dtdx\\
					& \quad \, \,   +  \lim_{k\to \infty}	\int_{\R^{n+1}}  \LC \int_0^\infty y^{1-2s}\wt u \eta_k (y) \, dy\RC \p_t \varphi \, dtdx.
				\end{split}
			\end{equation}
			Now, it suffices to show 
			\begin{align}
				\lim_{k\to \infty} \int_0^\infty\int_{\R^{n+1}} y^{1-2s}\p_y \wt u(t,x,y) \p_y \LC \varphi (t,x) \eta_k(y) \RC dtdxdy =0.
			\end{align}
			To this end, one has that 
			\begin{equation}
				\begin{split}
					&\quad \, \left| \int_0^\infty\int_{\R^{n+1}} y^{1-2s}\p_y \wt u(t,x,y) \varphi (t,x) \p_y  \eta_k(y)\,  dtdxdy \right| \\
					& \leq \int_{k}^{2k} \int_{\R^{n+1}}y^{1-2s} \left| \p_y \wt u(t,x,y) \right| \left| \varphi (t,x) \right| \left| \p_y  \eta_k(y)\right|   dtdxdy \\ 
					&\lesssim \underbrace{\frac{1}{k}\norm{\varphi}_{L^\infty(\R^{n+1})} \int_{k}^{2k} y^{1-2s} \left\|\p_y\wt u(\cdot, \cdot,y) \right\|_{L^1(\R;L^1(\R^{n}))} \, dy }_{\text{By }\left| \p_y \eta_k(y)\right|\lesssim 1/k \text{ for }y>0}\\
					&\lesssim \underbrace{ \frac{1}{k} \int_{k}^{2k} y^{1-2s} y^{-1}\norm{u}_{L^1(\R^{n+1})} \, dy }_{\text{By \eqref{decay estimate in y} as $m=1$, $r=p=1$ and $q=1$}}  \\
					&\lesssim  \, k^{-2s}  \\
					&\to 0 \text{ as }k\to \infty.
				\end{split}
			\end{equation}
			where we utilized the compact support condition for the functions $u$, $\varphi$ and $\eta$. This proves the assertion.
		\end{proof}

		With Lemma \ref{Lemma: weak formulation} and the regularity results for the function $v$ at hand, we can conclude the next result.
		
		\begin{theorem}[Key equation]
			Let $s,n,\wt \sigma$ satisfy the assumptions in Lemma \ref{Lemma: weak formulation}. Suppose that $\Omega , W\subset \R^n$ are nonempty, bounded and open sets with Lipschitz boundaries such that $\overline{\Omega} \cap \overline{W}=\emptyset$. Let $u_f\in \mathbf{H}^s(\R^{n+1})$ be the unique solution to \eqref{equ nonlocal para} with the exterior data $f\in \widetilde{\mathbf{H}}^s(W_T)$, and let $\wt u\in \mathcal{L}^{1,2}(\R^{n+2}_+; y^{1-2s}dtdxdy)$ be the extension of $u_f$. Let $v$ be given by \eqref{the function v}, then $v\in L^2(0,T;H^1(\Omega))$ with $\p_t v \in L^2(0,T;H^{-1}(\Omega))$ is a weak solution to 
			\begin{align}
				\LC \p_t -\nabla \cdot \sigma \nabla \RC  v=0 \text{ in }\Omega_T,
			\end{align}
			which is equivalent to 
			\begin{align}
				\int_{\Omega_T} \LC v \p_t \varphi -\sigma \nabla v \cdot \nabla \varphi \RC dtdx =0,
			\end{align}
			for any $\varphi \in C^\infty_c (\Omega_T)$.
		\end{theorem}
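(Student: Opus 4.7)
My plan is to deduce the theorem by combining the regularity result of Proposition \ref{Prop: bound w} with the weak identity of Lemma \ref{Lemma: weak formulation}, and then exploiting the fact that $u_f$ solves the nonlocal parabolic equation in $\Omega_T$. First, I would record that the support hypotheses are in place: since $f\in \widetilde{\mathbf{H}}^s(W_T)$ and the future-cutoff remark from Subsection \ref{subsection: well-posed} lets us take $u_f$ supported in $\overline{(\Omega\cup W)_T}$, we may apply Proposition \ref{Prop: bound w} to $u_f$. This immediately yields $v\in L^2(-T,T;H^1(\R^n))$, hence in particular $v\in L^2(-T,T;H^1(\Omega))$, together with the norm bound $\|v\|_{L^2(-T,T;H^1(\Omega))}\lesssim \|u_f\|_{\mathbf{H}^s(\R^{n+1})}$.

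Next, I would invoke Lemma \ref{Lemma: weak formulation} with $\Omega'=\Omega$. For every test function $\varphi\in C^\infty_c(\Omega_T)$ it supplies
\begin{equation*}
\int_{\Omega_T}\bigl(v\,\p_t\varphi - \sigma(x)\nabla v\cdot\nabla\varphi\bigr)\,dtdx \;=\; \bigl\langle \lim_{y\to 0} y^{1-2s}\p_y\wt u,\,\varphi\bigr\rangle_{\mathbf{H}^{-s},\mathbf{H}^s}.
\end{equation*}
Using the Dirichlet-to-Neumann characterization \eqref{extension DN relation} from Proposition \ref{Prop: extension}, the right-hand side equals $d_s^{-1}\langle (\p_t-\nabla\cdot\sigma\nabla)^s u_f,\,\varphi\rangle$. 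Because $\varphi$ is supported inside $\Omega_T$ and $u_f$ is a weak solution of \eqref{equ nonlocal para}, this duality pairing vanishes. This establishes the weak formulation $\int_{\Omega_T}(v\p_t\varphi-\sigma\nabla v\cdot\nabla\varphi)\,dtdx=0$, which is exactly the conclusion that $v$ solves $(\p_t-\nabla\cdot\sigma\nabla)v=0$ in $\Omega_T$.

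The time regularity $\p_t v\in L^2(-T,T;H^{-1}(\Omega))$ now follows as a by-product: the equation just proved reads $\p_t v=\nabla\cdot(\sigma\nabla v)$ in $\mathcal{D}'(\Omega_T)$, and since $\sigma\in C^2$ and $\nabla v\in L^2(\Omega_T;\R^n)$, the right-hand side automatically lies in $L^2(-T,T;H^{-1}(\Omega))$.

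The one delicate point -- and what I expect to be the main obstacle -- is that Lemma \ref{Lemma: weak formulation} is stated under the a priori hypothesis $\p_t v\in L^2(-T,T;H^{-1}(\Omega'))$, whereas here we intend to obtain that regularity \emph{from} the conclusion. There are two clean ways to close this loop: (i) differentiate the representation formula \eqref{wt u expression} in $t$ and repeat the Young-convolution estimates of Lemma \ref{Lemma: decay estimate} to get $\p_t v\in L^2(-T,T;H^{-1}(\R^n))$ directly from the extension; or (ii) observe that on $\Omega_T$ the extension PDE formally yields $\p_t v = \nabla\cdot(\sigma\nabla v)-d_s^{-1}(\p_t-\nabla\cdot\sigma\nabla)^s u_f$, and since the last term vanishes in $\Omega_T$, the identity can be justified via an approximation by $\eta_k(y)$-cutoffs exactly as in the proof of Lemma \ref{Lemma: weak formulation}. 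Either route closes the argument; once that regularity is secured, the remainder of the proof is the short chain above.
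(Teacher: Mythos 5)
Your proposal is correct and follows the same route the paper has in mind: the theorem is presented there without a separate proof, merely as a consequence of Proposition \ref{Prop: bound w} (for $v\in L^2(-T,T;H^1(\Omega))$) and Lemma \ref{Lemma: weak formulation} (for the weak formulation), plus the observation that the right-hand side duality pairing vanishes because $\varphi$ is supported in $\Omega_T$ where $(\p_t-\nabla\cdot\sigma\nabla)^s u_f=0$.

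Where your write-up adds genuine value is in flagging the apparent circularity: Lemma \ref{Lemma: weak formulation} formally lists $\p_t v\in L^2(-T,T;H^{-1}(\Omega'))$ among its hypotheses, yet the theorem is what produces that regularity. You are right that this is harmless. An inspection of the proof of Lemma \ref{Lemma: weak formulation} shows that the $\p_t v$ hypothesis is never actually invoked; the argument runs entirely on $v\in L^2(-T,T;H^1(\Omega'))$, the representation \eqref{the function v}, the decay estimates of Lemma \ref{Lemma: decay estimate}, and the $\eta_k(y)$-cutoff approximation. So the weak identity is available unconditionally, and once it is established the regularity $\p_t v=\nabla\cdot(\sigma\nabla v)\in L^2(-T,T;H^{-1}(\Omega))$ follows as you describe. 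Your alternative route (i), differentiating the representation formula in $t$, would also work but is unnecessary. One small caution: the sign/constant in the informal identity $\p_t v=\nabla\cdot(\sigma\nabla v)-d_s^{-1}(\p_t-\nabla\cdot\sigma\nabla)^s u_f$ that you write in option (ii) is consistent with \eqref{weak form} but differs by a factor of $-d_s^{-1}$ from the way Lemma \ref{Lemma: weak formulation} states its conclusion; this is a cosmetic discrepancy in the paper's display and does not affect the argument since the term vanishes on $\Omega_T$ anyway.
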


		\section{Density approach}\label{sec: density}
		
		In this section, we want to show that  the Cauchy data which are generated from the nonlocal parabolic equation form a dense set in the set of the Cauchy data for the local parabolic equation. The main tool is by using the property of the function $v$ given by \eqref{the function v}.

		\begin{proposition}\label{Prop: density}
			Given $s\in (0,1)$ and $n\geq 2$, let $\wt \sigma \in C^2(\R^n;\R^{(n+1)\times (n+1)})$ be of the form \eqref{tilde sigma} with $\sigma=\mathrm{Id}$ in $\Omega_e$. Suppose that $\Omega , W\subset \R^n$ are nonempty, bounded and open sets with Lipschitz boundaries such that $\overline{\Omega} \cap \overline{W}=\emptyset$. Let $\wt u_f =\wt u_f (t,x,y) \in  \mathcal{L}^{1,2}(\R^{n+2}_+; y^{1-2s}dtdxdy)$ be the weak solution of 
			\begin{align}
				\begin{cases}
					y^{1-2s}\p_t \wt u_f -\nabla_{x,y}\cdot \LC y^{1-2s}\wt \sigma(x)\nabla_{x,y}\wt u_f \RC=0 &\text{ in }\R^{n+2}_+,\\
					\wt u_f=f &\text{ on }\LC\Omega_e \RC_T, \\
					\displaystyle\lim_{y\to 0}y^{1-2s}\p_y \wt u_f=0 &\text{ in }\Omega_T,
				\end{cases}
			\end{align}
			where $f=f(t,x)\in C^\infty_c(W_T)$. Consider the sets
			\begin{align}
				\begin{split}
					\mathcal{V}&:=\left\{ v_f(t,x):=\int_0^\infty y^{1-2s}\wt u_f(t,x,y)\, dy, \text{ for } f\in C^\infty_c(W_T) \right\}, \\
					\mathcal{V}'&:=\left\{v|_{(\p \Omega)_T}:\, v\in \mathcal{V} \right\},
				\end{split}
			\end{align}
			then $\overline{\mathcal{V}'}=L^2(-T,T;H^{1/2}(\p \Omega))$.
		\end{proposition}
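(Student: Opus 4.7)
The plan is to argue by Hahn-Banach duality, combining the duality principle of Proposition~\ref{Prop: duality} with the strong unique continuation property (strong UCP) for nonlocal parabolic operators. Assume for contradiction that $\overline{\mathcal{V}'} \subsetneq L^2(-T,T;H^{1/2}(\p\Omega))$, so that some nonzero $\phi \in L^2(-T,T;H^{-1/2}(\p\Omega))$ annihilates $\mathcal{V}'$, i.e.\ $\int_{(\p\Omega)_T} \phi \, v_f|_{(\p\Omega)_T}\, dSdt=0$ for every $f \in C^\infty_c(W_T)$. The goal is to derive $\phi=0$.

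First I would introduce the adjoint local problem: let $V \in L^2(-T,T;H^1(\Omega))$ be the unique weak solution of the backward parabolic problem $-\p_t V -\nabla\cdot\sigma\nabla V=0$ in $\Omega_T$ with Neumann data $\sigma\nabla V\cdot\nu=\phi$ on $(\p\Omega)_T$ and terminal condition $V(T,\cdot)=0$. Since $v_f\in L^2(0,T;H^1(\Omega))$ (by Proposition~\ref{Prop: bound w}) solves the homogeneous forward equation with $v_f(-T,\cdot)=0$, Green's identity on $\Omega_T$ produces
\[
0=\int_{(\p\Omega)_T} \phi\, v_f\, dSdt =\int_{(\p\Omega)_T} V\,(\sigma\nabla v_f\cdot\nu)\, dSdt, \qquad \forall f\in C^\infty_c(W_T).
\]

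Next I would transfer this boundary identity into an integral identity on $W_T$. Extend $V$ by zero to $\Omega_e$ and call the result $\overline{V}\in L^2(\R^{n+1})$; then construct its weighted extension $\widetilde V$ on $\R^{n+2}_+$ (Dirichlet trace $\overline{V}$ for the $y^{1-2s}$-equation, as in \eqref{degen para PDE}), and form the companion function $V^*(t,x):=\lim_{y\to0}\int_y^\infty \mu^{1-2s}\widetilde V(t,x,\mu)\, d\mu$ in the spirit of \eqref{the function v}. The computations of Section~\ref{subsec: duality} then give $(\p_t-\nabla\cdot\sigma\nabla)^{1-s}V^* = -d_{1-s}\,\overline{V}$ on $\R^{n+1}$. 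Now apply a weighted Green's identity to the pair $(\widetilde u_f,\widetilde V)$ on $\R^{n+2}_+$, justified by the decay estimates of Lemma~\ref{Lemma: decay estimate} and Proposition~\ref{Prop: bound w}. The contribution at $y=\infty$ vanishes; the contribution at $y=0$ splits via \eqref{extension DN relation} into a pairing between $\widetilde u_f(\cdot,\cdot,0)=u_f$ on $(\Omega_e)_T$ (which equals $f$ and is supported in $W_T$) and $V^*$, plus a jump term across $(\p\Omega)$ corresponding to the discontinuity of $\overline{V}$, which equals precisely the already-vanishing $\int_{(\p\Omega)_T} V(\sigma\nabla v_f\cdot\nu)\, dSdt$ from the previous step. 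The outcome is
\[
\int_{W_T} f(t,x)\, V^*(t,x)\, dtdx =0, \qquad \forall f\in C^\infty_c(W_T),
\]
so $V^*\equiv 0$ on $W_T$.

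Finally, since $\overline{V}=0$ on $(\Omega_e)_T\supset W_T$, the relation $(\p_t-\nabla\cdot\sigma\nabla)^{1-s}V^*=-d_{1-s}\overline{V}$ also yields $(\p_t-\nabla\cdot\sigma\nabla)^{1-s}V^*=0$ on $W_T$. The strong UCP for the nonlocal parabolic operator $(\p_t-\nabla\cdot\sigma\nabla)^{1-s}$ (cf.\ the UCP framework of \cite{BKS2022calderon}) then forces $V^*\equiv 0$ on $\R^{n+1}$; hence $\overline{V}\equiv 0$, $V\equiv 0$ in $\Omega_T$, and $\phi=\sigma\nabla V\cdot\nu|_{(\p\Omega)_T}=0$, a contradiction. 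The main technical obstacle lies in the global integration by parts on $\R^{n+2}_+$: the extension $\widetilde V$ of the jump function $\overline{V}$ is only weakly regular near $(\p\Omega)\times\{0\}$, and controlling the limit as $y\to 0$ requires approximating $\overline{V}$ by smoother cut-offs and using the weighted decay estimates of Lemma~\ref{Lemma: decay estimate} to separate cleanly the $W_T$-contribution from the $(\p\Omega)_T$-jump contribution.
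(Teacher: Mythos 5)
Your general strategy---Hahn--Banach duality, an adjoint backward problem, then a unique continuation step---is in the same family as the paper's argument, but the concrete route you propose has a genuine obstruction that the paper's proof is specifically engineered to avoid.

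The central difficulty lies in the object $\overline V$: the zero extension of $V$ across $\partial\Omega$ into $\Omega_e$. Unless $V|_{(\partial\Omega)_T}=0$ (which it is not, since you only impose a terminal condition and Neumann data $\phi$), the function $\overline V$ has a jump across $\partial\Omega$ and therefore fails to lie in $\mathbf{H}^s(\R^{n+1})$ whenever $s\geq 1/2$. But everything downstream in your sketch---the Caffarelli--Silvestre type extension $\widetilde V$, the decay estimates of Lemma~\ref{Lemma: decay estimate}, the construction of $V^*$, and the claimed identity $(\p_t-\nabla\cdot\sigma\nabla)^{1-s}V^*=-d_{1-s}\overline V$ from Section~\ref{subsec: duality}---is derived in the paper \emph{only} for data in $\mathbf{H}^s$ with compact support. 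You flag this as a ``technical obstacle'' to be fixed by mollification, but it is not merely technical: for $s\geq 1/2$ the object $\widetilde V$ is not within the scope of Proposition~\ref{Prop: extension} at all, and smoothing $\overline V$ to bring it into $\mathbf{H}^s$ destroys the exact cancellation with $\int_{(\partial\Omega)_T} V\,(\sigma\nabla v_f\cdot\nu)$ that your argument needs. The paper sidesteps this completely by never forming a jump function: in Lemma~\ref{Lemma: weak solvable} the adjoint solution $w$ is built \emph{in the extension half-space} as a solution of a mixed Dirichlet/Neumann boundary value problem (Dirichlet on $(\Omega_e)_T\times\{0\}$, Neumann on $\Omega_T\times\{0\}$), via the decomposition $w=\wt u_1-\mathcal{E}_s u_1+u_2$, so no zero extension of a boundary trace is ever required.

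A second issue is the Green's identity itself. As stated, you pair $\wt u_f$ (a forward solution of the $y^{1-2s}$-weighted equation) with $\widetilde V$ (which, if built by the forward extension formula referenced in \eqref{degen para PDE}, is also a forward solution); the time-derivative terms then fail to cancel in the bulk integral over $\R^{n+2}_+$. What you actually need is a backward extension of $\overline V$, but the sketch does not make that distinction. Moreover, the boundary contribution at $y=0$ of the natural Green's identity for the pair $(\wt u_f,\widetilde V)$ involves $u_f$ paired against $\lim_{y\to0}y^{1-2s}\p_y\widetilde V\propto(\p_t-\nabla\cdot\sigma\nabla)^s\overline V$, not against the companion function $V^*$. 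To bring $V^*$ into play you would have to pair $\widetilde V$ against the conjugate function $w_f=\int_y^\infty\mu^{1-2s}\wt u_f\,d\mu$, which lives in the $y^{2s-1}$-weighted world, and the two weights do not combine into a clean Green's identity; this is precisely why the paper works instead with the $\eta_k\zeta_k$-cutoff scheme in Step~2 of the proof of Proposition~\ref{Prop: density}. Finally, your last step invokes the \emph{strong} UCP for the variable-coefficient nonlocal parabolic operator $(\p_t-\nabla\cdot\sigma\nabla)^{1-s}$, which is a substantially stronger (and, in this paper, unestablished) input than what is actually used: the paper needs only the classical Carleman-estimate UCP for a second-order degenerate parabolic equation in the extension, applied after reflecting $\wt w$ across $y=0$ over $W_T$ where both Dirichlet and Neumann traces vanish. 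In summary, the adjoint-plus-UCP architecture is right, but the choice to work through the zero extension $\overline V$ and the companion function $V^*$ breaks down for $s\geq 1/2$ and obscures the Green's identity; the paper's mixed boundary value formulation in $\R^{n+2}_+$ is the ingredient your proposal is missing.
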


		\begin{remark}
			It is worth mentioning that we make use the condition $\sigma=\mathrm{Id}$ in $\Omega_e$ in Proposition \ref{Prop: density} to connect the nonlocal and the local information. On the other hand, we do not need this condition in the study of the pure nonlocal parabolic operators.
		\end{remark}
		
		\subsection{A formal proof for the density result}

		With the expression formula \eqref{representation formula for wt u} at hand, for any $t\leq -T$, we can have an initial data condition that 
		\begin{equation}\label{initial data}
			\begin{split}
				\wt u_f(t,x,y)=c_{s} y\int_{\R^{n}} \int_0^\infty  e^{-\frac{y^2}{4\tau }} p(x,z,\tau)u_f(t-\tau,z)\, \frac{d\tau }{\tau^{1+s}} dz=0,
			\end{split}
		\end{equation}
		for $t\leq -T$, where we utilized the fact that $u_f$ is the solution to \eqref{equ nonlocal para} whereas $u(t-\tau, x)\equiv 0$ for $\tau >0$ and $t\leq -T$.

		\begin{proof}[Sketch proof of Proposition \ref{Prop: density} for $s=1/2$ and $\sigma=\mathrm{Id}$]
			As $s=1/2$ and $\sigma=\mathrm{Id}$, recalling that there holds the condition \eqref{initial data}, and the rest of the proof is divided into three steps:
			
			\medskip
			
			{\it Step 1. Interior denseness.} 
			
			\medskip
			
			\noindent  We first prove that the set $\mathcal{V}\subset L^2(-T,T;H^1(\Omega))$ is dense in the set $\mathcal{D}$, where 
			\begin{equation}\label{the set D}
				\begin{split}
					\mathcal{D}:=\{ v\in L^2(-T,T;H^1(\Omega)): \,  & \LC \p_t - \Delta  \RC v =0 \text{ in }\Omega_T \text{ and }v(-T,x)=0 \text{ in }\Omega \}.
				\end{split}
			\end{equation}
			By the Hahn-Banach theorem, it is enough to show that if $\psi \in L^2(-T,T; \wt H^{-1}(\Omega))$ such that $\psi (v_f)=0$ for all $f\in C^\infty_c (W_T)$, then there holds $\psi(v)=0$ for all $v\in \mathcal{D}$. 
			In what follows, for $\psi \in L^2(-T,T;\wt H^{-1}(\Omega))$. Consider an auxiliary adjoint problem 
			\begin{align}\label{adjoint problem}
				\begin{cases}
					\LC \p_t +\Delta_{x,y}  \RC w = \psi &\text{ in }\R^n_T \times (0,\infty), \\
					w=0 &\text{ in } \LC \Omega_e \RC _T\times \{0\}, \\
					\p_y w =0 &\text{ in }\Omega_T \times \{0\}, \\
					w(t,x,y)=0 &\text{ for }(x,y)\in \R^{n+1}_+, \,  t\geq T.
				\end{cases}
			\end{align}
			The (weak) solvability of the backward heat equation \eqref{adjoint problem} can be seen by reversing the time-variable that $t\mapsto -t$ for $t\in [-T,T]$, which will be described in Lemma \ref{Lemma: weak solvable} for general cases. Since the function $\psi$ does not have the decay property with respect to the $y$-variable, 
			By the duality and Fubini's theorem, an integration by parts formula yields that 
			\begin{equation}\label{f-comp 1}
				\begin{split}
					0&= \psi (v_f)  \\
					&= \left\langle \psi , \int_0^\infty \wt u_f (t,x,y) \, dy \right\rangle_{L^2(-T,T;\wt H^{-1}(\Omega))\times L^2(-T,T; H^{1}(\Omega))} \\
					& = \underbrace{\int_{0}^\infty \int_{\R^{n}} \int_{-T}^T \wt u_f \psi \, dtdxdy}_{\text{By the equation \eqref{adjoint problem}}} \\
					&= \int_{\R^{n}}\int_{-T}^T\wt u_f \p_y w(t,x,0)\, dtdx -\int_0^\infty \int_{\R^{n}}\int_{-T}^T  \nabla_{x,y}\wt u_f \cdot \nabla_{x,y}w\, dtdxdy \\
					&\quad + \int_{0}^\infty \int_{\R^{n}}\int_{-T}^T \wt u_f \p_t w \, dtdxdy \\
					&= \int_{W_T}  f \p_y w(t,x,0)\, dtdx +\underbrace{\int_{\R^{n}}\int_{-T}^T w(t,x,0)\p_y \wt u_f (t,x,0)\, dtdx}_{=0 \text{ since }\begin{cases}
							w(t,x,0)=0 &\text{ in }(\Omega_e)_T \\ \p_y\wt u_f(t,x,0)=0 &\text{ in }\Omega_T
					\end{cases} }\\
					&\quad \underbrace{ -\int_{0}^\infty \int_{\R^{n}}\int_{-T}^T w \LC \p_t  \wt u_f -\Delta_{x,y} \wt u_f  \RC  dtdxdy}_{\text{Integration by parts w.r.t }t \text{ and }w(T,x,y)=\wt u_f (-T,x,y)=0} \\
					&= \int_{W_T}  f \p_y w(t,x,0)\, dtdx,
				\end{split} 
			\end{equation}
			where we used that $\wt u_f$ satisfies \eqref{degen para PDE} in the last equality as $s=1/2$.
			We want to emphasize that for the rigorous argument, one needs to introduce suitable cutoff functions to utilize the equation given by \eqref{adjoint problem}. 
			
			\medskip
			
			{\it Step 2. Hahn-Banach approach.} 
			
			\medskip 
			
			\noindent Since $f\in C^\infty_c (W_T)$ can be arbitrary, by the equality \eqref{f-comp 1}, one must have that $\p_y w =0$ on $W_T\times \{0\}$, where $w$ is a solution to \eqref{adjoint problem}. By the UCP for second order parabolic equations (for example, see \cite{Sogge_UCP})\footnote{Since $w=\p_y w=0$ on $W_T\times \{0\}$, one can extend $w$ by zero to $W_T\times \{y \leq 0\}$ and apply the classical UCP for parabolic equations.}, we obtain that 
			\begin{align}\label{w equiv 0}
				w(t,x,y)\equiv 0 \text{ for } (t,x,y)\in (-T,\infty)\times \Omega_e  \times (0,\infty),
			\end{align}
			which will be used to prove the Hahn-Banach approach. 
			
			Let $v\in \mathcal{D}\subset L^2(-T,T;H^1(\Omega))$ and $\beta \in C^\infty_c(0,\infty)$ such that 
			\begin{equation}
				\beta \geq 0, \quad   \int_0^\infty \beta(y)\, dy=1 \quad \text{and} \quad \supp (\beta)\subset (1,2).
			\end{equation} 
			Let us set $\beta_k(y):= 1/k\beta(y/k)$, for $k\in \N$. Recalling that $\psi \in L^2(-T,T; \wt H^{-1}(\Omega))$ and $v\in \mathcal{D}$, one has 
			\begin{equation}
				\begin{split}
					&\quad \, - \psi(v)\\
					&=-\psi \LC  \int_0^\infty \beta_k(y) v \, dy \RC \\
					&= -\lim_{k\to \infty} \psi \LC \int_0^\infty \beta_k(y) v\, dy \RC \\
					&=\underbrace{ \lim_{k\to \infty}   \left\{  \int_0^\infty \int_{\Omega_T} \left[ \nabla_{x,y} \LC v\beta_k\RC \cdot \nabla_{x,y}w -  \LC v  \beta_k  \RC \p_t w \right]  dtdx dy\right\}}_{\text{By \eqref{adjoint problem} and integration by parts}} \\
					&=\lim_{k\to \infty} \left[  \int_{\Omega_T} \nabla v \cdot \nabla \LC\int_0^\infty \beta_k w\, dy \RC dtdx   + \int_0^\infty \int_{\Omega_T}  v \p_y \beta_k \p_y w \, dtdxdy \right. \\
					&\qquad \qquad   \left. +\int_0^\infty \int_{\Omega_T} \LC w\beta_k \RC \p_t v \, dtdxdy  \right] \\
					&= \underbrace{\lim_{k\to \infty} \left[  \int_{(\p \Omega)_T} \LC \nabla v \cdot \nu \RC \LC \int_0^\infty \beta_k w \, dy\RC dt dS_x  + \int_0^\infty \int_{\Omega_T}  v \p_y \beta_k \p_y w \, dtdxdy  \right]}_{\text{Since }v\in \mathcal{D}} \\
					&=\lim_{k\to \infty} \int_0^\infty \int_{\Omega_T}  v \p_y \beta_k \p_y w \, dtdxdy,
				\end{split}
			\end{equation}
			where all boundary integrals vanish due to the condition \eqref{w equiv 0}. Therefore, it suffices to show 
			\begin{align}
				\lim_{k\to \infty} \int_0^\infty \int_{\Omega_T}  v \p_y \beta_k \p_y w \, dtdxdy=0.
			\end{align}
			To this end, by the definition of $\zeta_k$, one has 
			\begin{align}
				\lim_{k\to \infty} \int_0^\infty \int_{\Omega_T}  v \p_y \beta_k \p_y w \, dtdxdy =\lim_{k\to \infty} \LC k^{-2}\int_k^{2k} \int_{\Omega_T} v \p_y \beta \p_y w\, dtdxdy \RC =0,
			\end{align} 
			which proves the density $\mathcal{V}\subset L^2(-T,T;H^1(\Omega))$ in $\mathcal{D}$ formally. In order to make the preceding derivation rigorously, we need to  check that the solution $w$ of \eqref{adjoint problem} possesses appropriate bounds and decay.
			
			\medskip
			
			{\it Step 3. Boundary denseness.} 
			
			\medskip
			
			\noindent The denseness of $\mathcal{V}'$ can be seen via trace estimates from $L^2(-T,T;H^1(\Omega))$ to $L^2(-T,T;H^{1/2}(\p \Omega))$.		
		\end{proof}

		\begin{remark}
			We usually consider parabolic problems as initial-boundary value problems. However, for the extension problem \eqref{degen para PDE}, there is no initial condition proposed with respect to the time-variable. As a matter of fact, suppose that  the past time information $u_f(t,x)=0$ for $t\leq -T$, where $u_f$ is the solution to the equation \eqref{equ nonlocal para}, then \eqref{initial data} holds for the extension problem \eqref{degen para PDE}. This implies that the extension problem \eqref{degen para PDE} contains an initial data implicitly, which comes from the past time information of \eqref{equ nonlocal para}.
		\end{remark}
		
		In next subsection, we will make all computations in the subsection rigorously for the case $s\in (0,1)$ and $\sigma \in C^2(\R^n; \R^{n\times n})$ fulfilling the condition \eqref{ellipticity condition}.

		\subsection{A rigorous proof for the density result}
		
		Let us begin with the well-posed of a generalized version of \eqref{adjoint problem}.

		\begin{lemma}[Solvability]\label{Lemma: weak solvable}
			Given $s\in (0,1)$ and $n\geq 2$, let $\wt \sigma \in C^2(\R^n;\R^{(n+1)\times (n+1)})$ be of the form \eqref{tilde sigma}. Let $\Omega\subset \R^n$ be a bounded open set with Lipschitz boundary $\p \Omega$. Given $\psi \in L^2(-T,T;\wt H^{-1}(\Omega))$, consider the problem
			\begin{equation}\label{adjoint problem generalized}
				\begin{cases}
					y^{1-2s}\p_t w - \nabla_{x,y} \cdot \LC y^{1-2s} \wt \sigma \nabla_{x,y} \RC  w= y^{1-2s}\psi &\text{ in }\R^n_T \times(0,\infty), \\
					w=0 &\text{ in } \LC \Omega_e \RC_T \times \{0\}, \\
					\displaystyle \lim_{y\to 0}y^{1-2s}\p_y w =0 &\text{ in }\Omega_T \times \{0\},\\
					w(t,x,y)=0 &\text{ for }(x,y)\in \R_+^{n+1}, t\leq -T.
				\end{cases}
			\end{equation}
			Then there exists a solution $w\in \mathcal{L}^{1,2}(\R^{n+2}_+; y^{1-2s}dtdxdy)$ of \eqref{adjoint problem generalized} such that 
			\begin{equation}
				\begin{split}
					&\quad \, \int_{\R^{n+2}_+} y^{1-2s} \LC  -w \p_t \varphi + \wt \sigma \nabla_{x,y} w \cdot \nabla_{x,y} \varphi \RC  dtdxdy \\
					&=\left\langle \psi , \int_0^\infty y^{1-2s}\varphi(t,x,y)\, dy\right\rangle_{L^2(-T,T; \wt H^{-1}(\Omega))\times  L^2(-T,T; H^{1}(\Omega))} ,
				\end{split}
			\end{equation}
			for any $\varphi \in \mathcal{L}^{1,2}_{c,0}(\R^{n+2}_+; y^{1-2s}dtdxdy)$,
			where 
			\begin{equation}\label{admissible test function set}
				\begin{split}
					& \mathcal{L}^{1,2}_{c,0}(\R^{n+2}_+; y^{1-2s}dtdxdy)\\
					&\quad :=\bigg\{ \varphi \in \mathcal{L}^{1,2}(\R^{n+2}_+; y^{1-2s}dtdxdy): \, \varphi (\cdot,x,y)\in H^{s/2}(\R) \text{ for }(x,y)\in \R^{n+1}_+,\\
					&\qquad\qquad  \varphi(t,\cdot,\cdot) \text{ has compact support in }\overline{\R^{n+1}_+}, \text{ for any } t\in [-T,T], \\
					&\qquad\qquad    \qquad \varphi(t,x,y)=0 \text{ for }t\geq T \text{ and }(x,y)\in \R^{n+1}_+, \   \varphi|_{(\Omega_e)_T \times \{0\}}=0 \bigg\}.
				\end{split}
			\end{equation}
		\end{lemma}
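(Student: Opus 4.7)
The plan is to construct $w$ by reducing to a standard variational forward parabolic problem on a Gelfand triple of weighted Sobolev spaces, and then verify the stated weak formulation via integration by parts. I proceed in three steps.

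First, I set up the functional framework. Introduce
\[
V:=\left\{u\in L^2(\R^{n+1}_+;y^{1-2s}dxdy):\nabla_{x,y}u\in L^2(\R^{n+1}_+;y^{1-2s}dxdy),\ u|_{\Omega_e\times\{0\}}=0\right\}
\]
with its natural weighted norm, and let $H$ be the weighted $L^2$-space, so that $V\hookrightarrow H\hookrightarrow V^\ast$ is a Gelfand triple. Since $y^{1-2s}$ is a Muckenhoupt $A_2$ weight for $s\in(0,1)$, the standard weighted Sobolev theory applies. The bilinear form
\[
a(u,v):=\int_{\R^{n+1}_+}y^{1-2s}\,\wt\sigma\,\nabla_{x,y}u\cdot\nabla_{x,y}v\,dxdy
\]
is bounded and, after absorbing a zero-order term by an exponential shift $e^{-\lambda t}$ if necessary, coercive on $V$ thanks to the ellipticity condition \eqref{ellipticity condition}.

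Second, I encode the forcing as a time-dependent functional
\[
\langle F(t),v\rangle_{V^\ast,V}:=\bigl\langle\psi(t,\cdot),\,\textstyle\int_0^\infty y^{1-2s}v(\cdot,y)\,dy\bigr\rangle_{\wt H^{-1}(\Omega)\times H^1(\Omega)}.
\]
This is well-defined because $\psi(t,\cdot)$ is supported in $\Omega$, and by the same convolution/kernel estimates behind Proposition \ref{Prop: bound w}, read as an a priori bound on arbitrary $v\in V$ restricted to $\Omega$ in the $x$-direction, the map $v\mapsto\int_0^\infty y^{1-2s}v(\cdot,y)\,dy$ is continuous from $V$ into $H^1(\Omega)$. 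Hence $F\in L^2(-T,T;V^\ast)$, and the classical Lions existence theorem (see, e.g., Dautray--Lions, Chapter XVIII) produces a unique $w\in L^2(-T,T;V)$ with $\p_t w\in L^2(-T,T;V^\ast)$, $w(-T,\cdot)=0$, such that
\[
\langle\p_t w,v\rangle_{V^\ast,V}+a(w,v)=\langle F(t),v\rangle_{V^\ast,V}\quad\text{for a.e. }t\in(-T,T),\ \forall v\in V.
\]

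Third, I recover the stated weak formulation. For any admissible $\varphi\in\mathcal{L}^{1,2}_{c,0}(\R^{n+2}_+;y^{1-2s}dtdxdy)$, the endpoint conditions $w(-T,\cdot)=0$ and $\varphi(T,\cdot,\cdot)=0$ allow integration by parts in $t$, giving
\[
\int_{\R^{n+2}_+}y^{1-2s}\bigl(-w\,\p_t\varphi+\wt\sigma\nabla_{x,y}w\cdot\nabla_{x,y}\varphi\bigr)\,dtdxdy=\int_{-T}^{T}\bigl(\langle\p_t w,\varphi\rangle+a(w,\varphi)\bigr)\,dt,
\]
which by the variational equation equals $\bigl\langle\psi,\int_0^\infty y^{1-2s}\varphi\,dy\bigr\rangle$, the requested identity. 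The Dirichlet condition on $(\Omega_e)_T\times\{0\}$ is encoded into $V$, while the degenerate Neumann condition $\lim_{y\to0}y^{1-2s}\p_y w=0$ on $\Omega_T$ is the natural boundary condition of the variational formulation.

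The main obstacle is proving $F\in L^2(-T,T;V^\ast)$: since $\int_0^\infty y^{1-2s}dy$ diverges, a naive Cauchy--Schwarz bound on $v\mapsto\int_0^\infty y^{1-2s}v(\cdot,y)dy$ fails, and one must exploit the compact support of $\psi$ in $\Omega$ together with a Hardy-type inequality in the weighted space and the convolution estimates of Section \ref{sec: regularity} in order to control this map into $H^1(\Omega)$ uniformly in $v\in V$. A secondary technicality is that $\mathcal{L}^{1,2}_{c,0}$ is narrower than the natural Lions test class, so one must verify by approximation (cutoff in $y$ together with time mollification, both preserving the vanishing trace on $(\Omega_e)_T\times\{0\}$) that testing against $\mathcal{L}^{1,2}_{c,0}$ already characterizes the Lions solution $w$.
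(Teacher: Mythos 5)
Your approach is genuinely different from the paper's, and unfortunately the key step fails. You want to apply the Lions existence theorem directly, which requires the forcing $F(t): v\mapsto \langle\psi(t,\cdot),\int_0^\infty y^{1-2s}v(\cdot,y)\,dy\rangle$ to lie in $L^2(-T,T;V^\ast)$. You claim this can be shown using the convolution estimates behind Proposition \ref{Prop: bound w} together with a Hardy-type inequality. But those convolution estimates are derived from the explicit representation formula \eqref{representation formula for wt u}, which is available only when $v$ is the Caffarelli--Silvestre-type extension of a compactly supported function in $\mathbf{H}^s(\R^{n+1})$; they say nothing about an arbitrary element of $V$. In fact the map $v\mapsto\int_0^\infty y^{1-2s}v(\cdot,y)\,dy$ is not even well-defined on $V$: take $\phi\in C_c^\infty(\Omega)$ and $v(x,y)=\phi(x)(1+y)^{-a}$ with $1-s<a\le 2-2s$ (a nonempty interval for every $s\in(0,1)$); then $\int_0^\infty y^{1-2s}(|v|^2+|\nabla_{x,y}v|^2)\,dy<\infty$ so $v\in V$, but $\int_0^\infty y^{1-2s}v(x,y)\,dy=\phi(x)\int_0^\infty y^{1-2s}(1+y)^{-a}\,dy=\infty$. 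A Hardy inequality in the weight $y^{1-2s}$ only improves integrability near $y=0$ and cannot produce the missing decay at $y=\infty$. So $F\notin L^2(-T,T;V^\ast)$ and Lions' theorem does not apply; the obstacle you flagged at the end is not a technicality but a structural failure of this route.

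The paper avoids this entirely by an explicit construction rather than a direct variational argument. It first solves the \emph{local} parabolic problem $(\p_t-\nabla\cdot\sigma\nabla)u_1=\psi$ in $\R^n_T$ with $u_1(-T,\cdot)=0$, which is a genuine Lions problem for $\psi\in L^2(-T,T;H^{-1}(\R^n))$ and yields $u_1\in L^2(\R;H^1(\R^n))$, $\p_t u_1\in L^2(\R;H^{-1}(\R^n))$. The $y$-independent lift $\wt u_1(t,x,y):=u_1(t,x)$ then automatically satisfies $y^{1-2s}\p_t\wt u_1-\nabla_{x,y}\cdot(y^{1-2s}\wt\sigma\nabla_{x,y}\wt u_1)=y^{1-2s}\psi$, so the inhomogeneity is absorbed without ever pairing $\psi$ against $\int_0^\infty y^{1-2s}v\,dy$ over all of $V$. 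Subtracting the Caffarelli--Silvestre extension $\mathcal{E}_s u_1$ kills the boundary trace on $\{y=0\}$ (using $\wt u_1(t,x,0)-\mathcal{E}_s u_1(t,x,0)=0$), and a third function $u_2$ — a well-posed mixed Dirichlet/Neumann extension problem whose Neumann data is $\lim_{y\to0}y^{1-2s}\p_y\mathcal{E}_s u_1\in\mathbf{H}^{-s}$ — corrects the degenerate Neumann condition on $\Omega_T\times\{0\}$. The solution is $w:=\wt u_1-\mathcal{E}_s u_1+u_2$. If you want to salvage a variational argument, the lesson is that one must first subtract a $y$-independent lift of a local solution so that the reduced problem has zero right-hand side; only then is the energy-space framework available.
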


			\begin{remark}
				It is known that $\lim_{y\to 0} y^{1-2s}\p _y w\in \mathbf{H}^{-s}(\R^{n+1})$ by the regularity of $w$. As a matter of fact, we can obtain that 
				\begin{equation}\label{int. by part w varphi}
					\begin{split}
						&\quad \, \int_{\R^{n+2}_+} y^{1-2s} \LC  -w \p_t \varphi + \wt \sigma \nabla_{x,y} w \cdot \nabla_{x,y} \varphi \RC  dtdxdy \\
						&=\left\langle \psi , \int_0^\infty y^{1-2s}\varphi(t,x,y)\, dy\right\rangle_{L^2(-T,T; \wt H^{-1}(\Omega))\times  L^2(-T,T; H^{1}(\Omega))} \\
						&\quad \, + \int_{\R^{n+1}} \varphi (t,x,0) \lim_{y\to 0} y^{1-2s} \p_y w \, dtdx,
					\end{split}
				\end{equation}
				for any $\varphi \in \mathcal{L}^{1,2}_{c}(\R^{n+2}_+; y^{1-2s}dtdxdy)$,
				where    
				\begin{equation}\label{admissible test function set compact}
					\begin{split}
						& \mathcal{L}^{1,2}_{c}(\R^{n+2}_+; y^{1-2s}dtdxdy)\\
						&\quad :=\bigg\{ \varphi \in \mathcal{L}^{1,2}(\R^{n+2}_+; y^{1-2s}dtdxdy): \, \varphi (\cdot,x,y)\in H^{s/2}(\R) \text{ for }(x,y)\in \R^{n+1}_+,\\
						&\qquad\qquad  \varphi(t,\cdot,\cdot) \text{ has compact support in }\overline{\R^{n+1}_+}, \text{ for any } t\in [-T,T], \\
						&\qquad\qquad    \qquad \varphi(t,x,y)=0 \text{ for }t\geq T \text{ and }(x,y)\in \R^{n+1}_+ \bigg\}.
					\end{split}
				\end{equation}
				We also point out that the last term in \eqref{int. by part w varphi} makes sense due to the fact $\varphi \in \mathcal{L}^{1.2}_c(\R^{n+2}_+; y^{1-2s}dtdxdy)$. As a matter of fact, we will demonstrate that $\varphi(t,x,0)\in \mathbf{H}^s(\R^{n+1})$ for either $\varphi \in \mathcal{L}^{1.2}_{c,0}(\R^{n+2}_+; y^{1-2s}dtdxdy)$ or $\varphi \in \mathcal{L}^{1.2}_c(\R^{n+2}_+; y^{1-2s}dtdxdy)$.  This will be shown in the proof of Lemma \ref{Lemma: weak solvable}.
				
			\end{remark}

			We observe a simple and useful result in the next lemma.
			\begin{lemma}\label{Lemma: reg of local para imply nonlocal para}
				Let $u=u(t,x)$ be a function satisfy 
				\begin{equation}\label{para regu}
					u\in L^2(\R;H^1(\R^n)) \quad \text{and} \quad \p_t u\in L^2(\R;H^{-1}(\R^n)).
				\end{equation}
				Then $u\in \mathbf{H}^s(\R^{n+1}))$ for any $s\in (0,1)$. 
			\end{lemma}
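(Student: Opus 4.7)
The plan is to work entirely in Fourier variables, since by the identification \eqref{space identification} we have $\mathbf{H}^s(\R^{n+1}) = \mathbb{H}^s(\R^{n+1})$, and the latter is characterized via \eqref{norm of H^s} by the symbol $(1 + (|\xi|^4 + |\rho|^2)^{1/2})^s$. So the goal reduces to showing that
\begin{equation}
\int_{\R^{n+1}} \bigl(1 + (|\xi|^4 + |\rho|^2)^{1/2}\bigr)^{s}\, |\widehat{u}(\rho,\xi)|^2\, d\rho\, d\xi < \infty,
\end{equation}
given that the hypothesis \eqref{para regu} translates (by Plancherel) to the two Fourier bounds $\int (1+|\xi|^2)|\widehat{u}|^2 < \infty$ and $\int \tfrac{|\rho|^2}{1+|\xi|^2}|\widehat{u}|^2 < \infty$.

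First I would use the elementary inequality $(a+b)^{1/2} \le a^{1/2}+b^{1/2}$ for $a,b\ge 0$ to bound
\begin{equation}
\bigl(1 + (|\xi|^4 + |\rho|^2)^{1/2}\bigr)^{s} \lesssim 1 + |\xi|^{2s} + |\rho|^{s}.
\end{equation}
The constant and the $|\xi|^{2s}$ term are immediately controlled by the $L^2(\R;H^1(\R^n))$ hypothesis since $|\xi|^{2s} \le 1 + |\xi|^2$ for $s\in(0,1)$. The only nontrivial piece is the $|\rho|^s$ contribution, which involves the time frequency; that is where the $H^{-1}$ control of $\p_t u$ must enter.

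For the $|\rho|^s$ term the key step will be an interpolation trick. I would split
\begin{equation}
|\rho|^{s} = \Bigl(\tfrac{|\rho|^2}{1+|\xi|^2}\Bigr)^{s/2} (1+|\xi|^2)^{s/2},
\end{equation}
and then apply Young's inequality $AB \le \tfrac{s}{2}A^{2/s} + \tfrac{2-s}{2}B^{2/(2-s)}$ with conjugate exponents $2/s$ and $2/(2-s)$ to obtain
\begin{equation}
|\rho|^{s} \le \tfrac{s}{2}\cdot \tfrac{|\rho|^2}{1+|\xi|^2} + \tfrac{2-s}{2}(1+|\xi|^2)^{s/(2-s)}.
\end{equation}
Since $s/(2-s) \le 1$ for $s\in(0,1)$, the last factor is bounded by $1+|\xi|^2$, so multiplying by $|\widehat{u}|^2$ and integrating bounds $\int |\rho|^s |\widehat{u}|^2$ by a constant times the sum of the two Fourier-side quantities furnished by \eqref{para regu}.

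Combining the three contributions yields finiteness of the $\mathbb{H}^s(\R^{n+1})$ norm of $u$, hence $u \in \mathbf{H}^s(\R^{n+1})$ by \eqref{space identification}. I do not expect any real obstacle here: the argument is a one-line Fourier interpolation, and the only point requiring a moment of care is checking that $s/(2-s) \le 1$ on $(0,1)$ so that $(1+|\xi|^2)^{s/(2-s)}$ is absorbed by the $H^1$ hypothesis rather than requiring a higher Sobolev regularity in $x$.
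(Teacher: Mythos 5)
Your proof is correct and follows the same Fourier-side strategy as the paper: pass to $\widehat u$, recast the hypotheses as the two bounds $\int(1+|\xi|^2)|\widehat u|^2 < \infty$ and $\int\tfrac{|\rho|^2}{1+|\xi|^2}|\widehat u|^2 < \infty$, interpolate between them, and invoke $\mathbf{H}^s(\R^{n+1})=\mathbb{H}^s(\R^{n+1})$. The only cosmetic difference is that the paper applies Cauchy--Schwarz to control $\int|\rho|\,|\widehat u|^2$ directly and then uses $(1+t)^s\le 1+t$, whereas you run a pointwise Young inequality with exponents $2/s$ and $2/(2-s)$ to dominate the symbol $|\rho|^s$ itself; the two computations are equivalent.
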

			
			\begin{proof}
				Since we have \eqref{para regu}, then Fourier transform implies that 
				\begin{align}\label{Fourier bound for u_1}
					\begin{split}
						\int_{\R^{n+1}}\LC 1+|\xi|^2 \RC \left| \widehat{u}(\rho,\xi)\right|^2\, d\rho d\xi &<\infty , \\
						\int_{\R^{n+1}}\LC 1+ |\xi|^2 \RC^{-1} \abs{\rho}^2\left| \widehat{u}(\rho,\xi)\right|^2\, d\rho d\xi& <\infty , 
					\end{split}		
				\end{align}
				where $\widehat{u}$ denotes the Fourier transform for $u$ with respect to both space and time variables. By utilizing \eqref{Fourier bound for u_1}, one has
				\begin{align}
					\int_{\R^{n+1}} \left| \widehat{u}(\rho,\xi)\right|^2\, d\rho d\xi <\infty .
				\end{align}
				Meanwhile, combined with \eqref{Fourier bound for u_1} and the H\"older's inequality, we get 
				\begin{align}\label{Fourier bound for u_1 2}
					\begin{split}
						&\quad \int_{\R^{n+1}} \abs{\rho}\left| \widehat{u}(\rho,\xi)\right|^2\, d\rho d\xi \\
						&\leq \LC 	\int_{\R^{n+1}}\LC 1+|\xi|^2 \RC \left| \widehat{u}(\rho,\xi)\right|^2\, d\rho d\xi  \RC^{1/2} \\
						&\quad \quad \cdot \LC 	\int_{\R^{n+1}}\LC 1+ |\xi|^2 \RC^{-1} \abs{\rho}^2\left| \widehat{u}(\rho,\xi)\right|^2\, d\rho d\xi\RC^{1/2}<\infty.
					\end{split}
				\end{align}
				So,   $u\in \mathbf{H}^s(\R^{n+1})$ for $s\in (0,1)$.
				Moreover, \eqref{Fourier bound for u_1} and \eqref{Fourier bound for u_1 2} yield that 
				\begin{align}
					\int_{\R^{n+1}}\LC 1+\LC |\xi|^4+|\rho|^2 \RC^{1/2}\RC^{s} \left| \widehat{u}(\rho,\xi)\right|^2\, d\rho d\xi <\infty,
				\end{align}
				for $s\in (0,1)$, which infers $u\in \mathbf{H}^s(\R^{n+1})=\mathbb{H}^s(\R^{n+1})$ as desired.
				
			\end{proof}

			Now, we are ready to prove Lemma \ref{Lemma: weak solvable}.

			\begin{proof}[Proof of Lemma \ref{Lemma: weak solvable}]
				We split the proof into three steps:
				
				\medskip
				
				{\it Step 1. Initiation} 
				
				\medskip
				
				\noindent 	Notice that $\psi \in L^2(-T,T;\wt H^{-1}(\Omega))$, so we may assume that $\supp(\psi)\subset \Omega_T\subset \R^{n}_T$ with $\psi \in L^2(-T,T;  H^{-1}(\R^n))$. Note that $\overline{\Omega_T}$ is a compact set in $\R^{n+1}$. Consider the equation 
				\begin{equation}\label{equ u_1}
					\begin{cases}
						\LC  \p_t -\nabla \cdot \sigma \nabla \RC u_1 =\psi  &\text{ in }\R^{n}_T,\\
						u_1(-T,x)=0 &\text{ for }x \in \R^n,
					\end{cases}
				\end{equation}
				then we have the preceding equation is solvable by using the variational method, (for instance, see \cite[Chapter XVIII, Section 3]{DL1992}), that is, there exists a solution $u\in L^{2}(-T,T;H^1(\R^n))$ to \eqref{equ u_1}. Meanwhile, $\nabla \cdot \sigma \nabla u_1 \in L^2(-T,T; H^{-1}(\R^n))$ so that 
				\begin{align}\label{regularity of u_1}
					u_1\in L^{2}(-T,T; H^1(\R^n)) \quad  \text{and}  \quad \p_t u_1 \in L^2(-T,T; H^{-1}(\R^n)).
				\end{align}

				To proceed, we want to extend the function $u_1 \in \mathbf{H}^s(\R^{n+1})$ with the same notation. To this end, we extend $u_1$ by zero to the set outside $ \{ t\leq -T \} \times  \R^{n+1}$ by the initial condition in \eqref{equ u_1}. For $t\geq T$ and we can define $u_1(t+T,x)=u_1(T-t,x)$ for $t\geq 0$ and $x\in \R^n$. Then the extend $u_1(t,x)$ defined on $\R^{n+1}$ with compact support on $\{-T\leq t\leq 3T\}$  and satisfy
				\begin{align} 
					u_1\in L^{2}(\R; H^1(\R^n)) \quad  \text{and}  \quad \p_t u_1 \in L^2(\R; H^{-1}(\R^n)).
				\end{align} 
				By Lemma \ref{Lemma: reg of local para imply nonlocal para}, we can see that the function $u_1 \in \mathbf{H}^s(\R^{n+1})$.

				\medskip
				
				{\it Step 2. Artificial solutions.} 
				
				\medskip
				
				\noindent Let us set an auxiliary function $\wt u_1=\wt u_1(t,x,y)$, so that $\wt u_1$ is a constant with respect to the $y$-direction. In other words, $\wt u_1(t,x,y):= \wt u_1(t,x)$, where $u_1$ is a solution to \eqref{equ u_1}. With the aid of \eqref{initial data}, it is not hard to check that $\wt u_1$ is a solution to 
				\begin{equation}\label{equ wt u_1}
					\begin{cases}
						y^{1-2s}\p_t \wt u_1 -\nabla_{x,y}\cdot \LC y^{1-2s}\wt \sigma \nabla_{x,y}\wt u_1 \RC \\
						\qquad \qquad \quad =y^{1-2s}\LC \p_t  u_1 -\nabla \cdot \sigma \nabla  u_1 \RC=y^{1-2s}\psi &\text{ in }\R^n_T \times(0,\infty), \\
						\wt u_1 (t,x,y)=0 &\text{ for }(x,y)\in \R^{n+1}_+,\ t\leq -T,
					\end{cases}
				\end{equation} 
				where we utilized the equation \eqref{equ u_1}. 
				Let us consider the operator $\mathcal{E}_s$ satisfying
				\begin{align}
					\begin{split}
						\mathcal{E}_s: \mathbf{H}^s (\R^{n+1})&\to  \mathcal{L}^{1,2}(\R^{n+2}_+; y^{1-2s}dtdxdy), \\
						u_1 &\mapsto \mathcal{E}_s u_1,
					\end{split}
				\end{align} 		
				which stands for the extension operator.
				Then $\mathcal{E}_su_1=\LC \mathcal{E}_su_1\RC(t,x,y)$ is a solution to the extension problem 
				\begin{equation}\label{equ E_s u_1 no intial}
					\begin{cases}
						y^{1-2s}\p_t \mathcal{E}_su_1 -\nabla_{x,y}\cdot \LC y^{1-2s}\wt \sigma \nabla_{x,y} \mathcal{E}_su_1 \RC =0 &\text{ in }\R^{n+2}_+, \\
						\LC \mathcal{E}_s u_1\RC (t,x,0)=u_1(t,x) & \text{ for }(t,x)\in \R^{n+1}.
					\end{cases}
				\end{equation}
				Thus, via Proposition \ref{Prop: extension} \ref{item b prop extension}, we have $\lim_{y\to 0 }y^{1-2s} \p_y \mathcal{E}_s u_1\in \mathbf{H}^{-s}(\R\times \Omega)$. In further, the same computation as shown in \eqref{initial data}, we can see that $\LC \mathcal{E}_s u_1\RC (t,x,y)=0$ for $(x,y)\in \R^{n+1}_+$, and $t\leq -T$. Combined with \eqref{equ E_s u_1 no intial}, one can derive
				\begin{equation}\label{equ E_s u_1}
					\begin{cases}
						y^{1-2s}\p_t \mathcal{E}_su_1 -\nabla_{x,y}\cdot \LC y^{1-2s}\wt \sigma \nabla_{x,y} \mathcal{E}_su_1 \RC =0 &\text{ in }\R^{n+2}_+, \\
						\LC \mathcal{E}_s u_1\RC (t,x,0)=u_1(t,x) & \text{ for }(t,x)\in \R^{n+1},\\
						\LC \mathcal{E}_s u_1\RC (t,x,y)=0 &\text{ for }(x,y)\in \R^{n+1}_+,\ t\leq -T.
					\end{cases}
				\end{equation}

				On the other hand, we consider the problem 
				\begin{equation}\label{equ u_2}
					\begin{cases}
						y^{1-2s}\p_t u_2 -\nabla_{x,y}\cdot \LC y^{1-2s}\wt \sigma \nabla_{x,y} u_2 \RC  =0 &\text{ in }\R^n_T \times(0,\infty), \\
						u_2 =0 &\text{ on }(\Omega_e)_T \times \{0\}, \\
						\displaystyle \lim_{y\to 0}y^{1-2s} \p_y u_2 =\lim_{y\to 0 }y^{1-2s} \p_y \mathcal{E}_s u_1 &\text{ on } \Omega_T \times \{0\}, \\
						u_2 (t,x,y)=0 &\text{ for }(x,y)\in \R^{n+1}_+,\ t\leq -T.
					\end{cases}
				\end{equation}
				Let us utilize this fact to discuss the solvability of \eqref{equ u_2} by considering that 
				\begin{equation}\label{comp of u_2 s}
					\begin{split}
						&\quad \, \int_{\R^{n+2}_+} y^{1-2s}\LC  -u_2 \p_t \varphi + \wt \sigma \nabla_{x,y} u_2 \cdot \nabla_{x,y} \varphi \RC  dtdxdy \\
						&=\int_{\R\times \Omega} \varphi(t,x,0) \lim_{y\to 0} y^{1-2s} \p_y \mathcal{E}_s u_1 \, dtdx ,
					\end{split}
				\end{equation}
				for any $\varphi \in \mathcal{L}^{1,2}_{c,0}(\R^{n+2}_+, y^{1-2s}dtdxdy)$. As we showed before, it is known that $\lim_{y\to 0} y^{1-2s}\p_y \mathcal{E}_s u_1 \in \mathbf{H}^{-s}(\R^{n+1})$ so that $\left.\lim_{y\to 0} y^{1-2s}\p_y \mathcal{E}_s u_1 \right|_{\R\times \Omega}\in \mathbf{H}^{-s}(\R\times \Omega)$. So, $\varphi(t,x,0)\in \mathbf{H}^s(\R^{n+1})$ is enough to prove the existence of $u_2$.

				Reviewing that from the trace characterization for fractional Sobolev spaces (for example, see \cite{Tyl2014}), it is known that $H^s(\R^n)$ can be viewed as the trace space of $H^1(\R^{n+1}_+,y^{1-2s} dxdy)$ with respect to the space-variable on $\p \R^{n+1}_+$, for any $n\in \N$. Hence, given any $\varphi \in \mathcal{L}^{1,2}_{c,0}(\R^{n+2}_+; y^{1-2s}dtdxdy)$, one can see that 
				\begin{equation}\label{trace Sobolev}
					\norm{\varphi(t,\cdot,0)}_{H^{s}(\R^n)} \lesssim \norm{\varphi(t,\cdot,y)}_{{H^1(\R^{n+1}_+;y^{1-2s}dxdy)}},
				\end{equation}
				for any fixed $t\in \R$, where $H^s(\R^n)$ denotes the fractional Sobolev space given by \eqref{fractional Sobolev space} for $s\in (0,1)$. Taking the $H^{s/2}(\R)$ with respect to the time-variable for the inequality \eqref{trace Sobolev}, we can have 
				\begin{equation}\label{trace Sobolev time}
					\norm{\varphi(t,x,0)}_{H^{s/2}(\R; H^s(\R^n))} \lesssim \norm{\varphi(t,x,y)}_{H^{s/2} (\R; H^1(\R^{n+1}_+;y^{1-2s}dxdy))}.
				\end{equation}
				Now, since the identity \eqref{space identification} holds, with the definition \eqref{norm of H^s} at hand, then we can get  
				\begin{equation}
					\begin{split}
						\norm{\varphi(t,x,0)}_{\mathbf{H}^s(\R^{n+1})}\approx  \norm{\varphi(t,x,0)}_{H^{s/2}(\R; H^s(\R^n))},
					\end{split}
				\end{equation}
				which implies that $\varphi(t,x,0)\in \mathbf{H}^s(\R^{n+1})$ if $\varphi\in \mathcal{L}^{1,2}_{c,0}(\R^{n+2}_+, y^{1-2s}dtdxdy)$.
				Hence, by the preceding discussions, one has the trace relation that 
				\begin{equation}
					\mathcal{L}^{1,2}_{c,0}(\R^{n+2}_+, y^{1-2s}dtdxdy)\hookrightarrow \mathbf{H}^s(\R^{n+1}),
				\end{equation}
				so that 
				\begin{align}
					\mathcal{L}^{1,2}_{c,0}(\R^{n+2}_+, y^{1-2s}dtdxdy)\ni \varphi \mapsto \int_{\R \times \Omega} \varphi \lim_{y\to 0} y^{1-2s} \p_y \mathcal{E}_s u_1 dtdx
				\end{align}
				is bounded, where the right hand side in the above relation is viewed as the duality pairing between $ \mathbf{H}^{-s}(\R\times \Omega)$ and $ \mathbf{H}^s(\R\times \Omega)$.
				Thus, by using the standard variational method, we can also summarize that the problem \eqref{equ u_2} possesses a unique solution $u_2 \in \mathcal{L}^{1,2}({\R^n_T \times(0,\infty)}, y^{1-2s}dtdxdy)$. It is not hard to extend $u_2(t,x,y)$ for $t>T$ such that $u_2 \in \mathcal{L}^{1,2}({\R^{n+2}_+ }, y^{1-2s}dtdxdy)$.

				Now, let $\varphi \in \mathcal{L}^{1,2}_{c,0}(\R^{n+2}_+; y^{1-2s}dtdxdy)$ be an arbitrary test function, then we have 
				\begin{equation}\label{comp of E_1 u_1}
					\begin{split}
						&\quad \, \int_{\R^{n+2}_+} y^{1-2s}\LC  -\mathcal{E}_s u_1 \p_t \varphi + \wt \sigma \nabla_{x,y} \mathcal{E}_s u_1 \cdot \nabla_{x,y} \varphi \RC  dtdxdy \\
						&=\int_{\R^{n+1}} \varphi(t,x,0) \lim_{y\to 0} y^{1-2s} \p_y \mathcal{E}_s u_1 \, dtdx\\
						& =\int_{\R\times \Omega} \varphi(t,x,0) \lim_{y\to 0} y^{1-2s} \p_y \mathcal{E}_s u_1 \, dtdx,
					\end{split}
				\end{equation}
				and
				\begin{equation}\label{comp of u_2}
					\begin{split}
						&\quad \, \int_{\R^{n+2}_+} y^{1-2s}\LC  -u_2 \p_t \varphi + \wt \sigma \nabla_{x,y} u_2 \cdot \nabla_{x,y} \varphi \RC  dtdxdy \\
						&=\int_{\R\times \Omega} \varphi(t,x,0) \lim_{y\to 0} y^{1-2s} \p_y \mathcal{E}_s u_1 \, dtdx.
					\end{split}
				\end{equation}

				\medskip
				
				{\it Step 3. Construction of solutions.} 
				
				\medskip
				
				\noindent  We want to show 
				\begin{equation}\label{w constructed in Lemma solve}
					w:= \wt u_1 -\mathcal{E}_s u_1 + u_2 \in \mathcal{L}^{1,2}({\R^n_T \times(0,\infty)}, y^{1-2s}dtdxdy)
				\end{equation}
				is a solution to the equation \eqref{adjoint problem generalized}.

				In fact, there also holds 
				\begin{equation}\label{comp of u_1}
					\begin{split}
						&\quad \, \int_{\R^{n+2}_+} y^{1-2s}\LC  \p_t \wt u_1  \varphi + \wt \sigma \nabla_{x,y} \wt u_1 \cdot \nabla_{x,y} \varphi \RC  dtdxdy \\
						&=\int_{\R^{n+2}_+} y^{1-2s}\LC  \p_t u_1  \varphi +  \sigma(x) \nabla u_1 \cdot \nabla \varphi \RC  dtdxdy \\
						&=\int_{\R^{n+2}_+} \left\{\p_t u_1   \LC \int_0^\infty y^{1-2s} \varphi \, dy \RC +  \sigma(x) \nabla u_1  \cdot \nabla \LC \int_0^\infty y^{1-2s} \varphi \, dy \RC \right\} dtdx \\
						&= \left\langle  \psi (\cdot, \cdot ), \int_0^\infty y^{1-2s}\varphi (\cdot, \cdot,y)\, dy  \right\rangle_{L^2({-T,T};H^{-1}(\R^n)), L^2(-T,T;H^1(\R^n))},
					\end{split}
				\end{equation}
				where we used the weak formulation of \eqref{equ u_1} as $\tau=T$. Finally, in order to make \eqref{comp of u_1} rigorously, we will verify that $\int_0^\infty y^{1-2s}\varphi (\cdot, \cdot,y)\, dy \in  L^2(\R;H^1(\R^n))$. By the compact support of $\varphi$, for $M>0$ sufficiently large, there holds
				\begin{equation}
					\begin{split}
						&\quad \, \int_{\R^{n+1}} \left| \nabla \LC \int_0^\infty y^{1-2s}\varphi \, dy \RC \right|^2 dtdx \\
						&= \int_{\R^{n+1}}  \left|  \int_0^M y^{1-2s}\nabla \varphi \, dy  \right|^2 dtdx \\
						& \leq \int_{\R^{n+1}} \LC \int_0^M y^{1-2s}\, dy\RC \LC  \int_0^M y^{1-2s}|\nabla \varphi|^2\,  dy \RC dtdx \\
						&\leq \frac{M^{2-2s}}{2-2s}\int_{\R^{n+2}_+} y^{1-2s}|\nabla \varphi|^2 \, dtdxdy\\
						&<\infty,
					\end{split}
				\end{equation}
				and similar estimates hold for $\int_{\R^{n+1}} \left|  \int_0^\infty y^{1-2s}\varphi \, dy  \right|^2 dtdx$. This demonstrates that the function $\int_0^\infty y^{1-2s}\varphi \, dy\in L^2(\R; H^1(\R^n))$ can be regarded as a test function in \eqref{comp of u_1}. Hence, one can conclude that  $w:= \wt u_1 -\mathcal{E}_s u_1 + u_2 \in \mathcal{L}^{1,2}(\R^{n+2}_+, y^{1-2s}dtdxdy)$ is a solution to \eqref{adjoint problem generalized} as desired. This completes the proof.		
			\end{proof}

			\begin{remark}\label{Remark: adjoint solution}
				Let us emphasize that 
				\begin{enumerate}[(a)]
					\item In fact, Lemma \ref{Lemma: weak solvable} also implies the existence of solutions to the following adjoint problem 
					\begin{equation}\label{adjoint problem generalized_rmk}
						\begin{cases}
							y^{1-2s}\p_t \wt w + \nabla_{x,y} \cdot \LC y^{1-2s} \wt \sigma \nabla_{x,y} \RC \wt  w= y^{1-2s}\psi &\text{ in }\R^n_T \times(0,\infty), \\
							\wt w=0 &\text{ in } \LC \Omega_e \RC_T \times \{0\}, \\
							\displaystyle \lim_{y\to 0}y^{1-2s}\p_y \wt w =0 &\text{ in }\Omega_T \times \{0\}, \\
							\wt w(t,x,y) =0 &\text{ for }t\geq T \text{ and }(x,y)\in \R^{n+1}_+.
						\end{cases}
					\end{equation}
					The result can be derived by simply taking the time reversing change of variables $t\mapsto -t$ for $t\in [-T,T]$, i.e., if $w(t,x.y)$ is a solution to \eqref{adjoint problem generalized} if and only if $\wt w (t,x,y):=w(-t,x,y)$ is a solution to the backward equation \eqref{adjoint problem generalized_rmk}.
					
					\item From the Step 1 in the proof of Lemma \ref{Lemma: weak solvable}, we know that if $u$ is a parabolic solution satisfying \eqref{para regu}, then $u$ will satisfy the regularity for the nonlocal parabolic equation, i.e., $u\in \mathbf{H}^s(\R^{n+1})$. This also gives us some hints to relate nonlocal and local parabolic equations.
				\end{enumerate}
			\end{remark}
			
			
			\begin{proof}[Proof of Proposition \ref{Prop: density}]
				Let us prove the density of $\mathcal{V}\subset L^2(-T,T;H^1(\Omega))$ in $\mathcal{D}$, where $\mathcal{D}$ is defined by
				\begin{equation}\label{the set D_rigorous}
					\begin{split}
						\mathcal{D}:=\{ v\in L^2(-T,T;H^1(\Omega)): \,  & \LC \p_t -\nabla \cdot \sigma \nabla \RC v =0 \text{ in }\Omega_T , \\
						&\qquad  \text{ and }v(-T,x)=0 \text{ in }\Omega \}.
					\end{split}
				\end{equation}
				By the Hahn-Banach theorem, it is enough to show that if $\psi \in L^2(-T,T; \wt H^{-1}(\Omega))$ with $\psi(v_f)=0$ for all $f\in C^\infty_c(W_T)$, then there holds $\psi (v)=0$ for all $v\in \mathcal{D}$. In the rest of the proof, we adopt the notation that $\wt u_f$ to denote solutions of \eqref{degen para PDE} with $\wt u_f (t,x,0)=u_f (t,x)$ and $v_f=\int_{0}^\infty y^{1-2s}\wt u_f \, dy$, where $u_f\in \mathbf{H}^s(\R^{n+1})$ is the solution of \eqref{nonlocal DN map} with the exterior data $f$.
				
				\medskip
				
				{\it Step 1. Smooth cutoffs.} 
				
				\medskip
				
				\noindent  Let us consider the auxiliary problem \eqref{adjoint problem generalized}, which is solvable by Lemma \ref{Lemma: weak solvable}. We introduce two cutoff functions, one for $x$-variable and the other for $y$-variable. On one hand, let $\zeta_k(y)=\zeta(y/k)$ for $k\in \N$, where $\zeta \in C^\infty_c([0,2])$ is a smooth cutoff function satisfying $\zeta\equiv 1$ in a neighborhood of $y=0$ and $\int_0^\infty y^{1-2s}\zeta(y) \, dy =1$. In particular, the change of variable yields 
				\begin{equation}
					k^{2s-2}\int_0^\infty y^{1-2s} \zeta_k (y)\, dy =\int_0^\infty y^{1-2s}\zeta(y) \, dy =1.
				\end{equation}
				On the other hand, we may assume that $\overline{\Omega}\cap \overline{W}\subset B_R$, for sufficiently large $R>0$ as before, where $B_R$ is the ball in $\R^n$ of radius $R$ and center at the origin. Let $\eta_k(x):=\eta(x/k)$ for $k\in \N$, where $\eta \in C^\infty_c (B_{2R})$ is a radial cutoff function such that $\eta =1$ in $B_R$. With these smooth cutoff functions at hand, we can see that the 
				\[
				\wt u_{f,k}(t,x,y) :=\wt u_f(t,x,y) \eta_k(x)\zeta_k(y) \in  \mathcal{L}^{1,2}_{c}(\R^{n+2}_+; y^{1-2s}dtdxdy),
				\]
				for any $f \in C^\infty_c (W_T)$.

				As in Remark \ref{Remark: adjoint solution}, we have constructed that the function $\wt w$ is a solution to the backward equation \eqref{adjoint problem generalized_rmk}. Recall that $\psi \in L^2(-T,T;\wt H^{-1}(\Omega))$, then 
				\begin{align}\label{pairing for psi(v_f)}
					\begin{split}
						0&=\psi(v_f) \\
						&=\lim_{k\to \infty} \left\langle   \psi , \int_0^\infty y^{1-2s}\wt u_{f,k} \, dy  \right\rangle _{L^2(-T,T; \wt H^{-1}(\Omega)), L^2(-T,T;H^1(\Omega))} \\
						&=\underbrace{\lim_{k\to \infty} \int_{\R^{n+2}_+} \left[ y^{1-2s} \p_t \wt w +\nabla_{x,y}\cdot \LC y^{1-2s}\wt \sigma \nabla_{x,y}\wt w\RC  \right] \wt u_{f,k} \, dtdxdy }_{\text{By \eqref{adjoint problem generalized_rmk}}} \\
						&= \lim_{k \to \infty}  \bigg\{ \int_{\R^{n+2}_+} y^{1-2s} \wt u_{f,k} \p_t \wt w\, dtdxdy+ \int_{\R^{n+1}} {\wt u_{f,k}(t,x,0)} \lim_{y\to 0}y^{1-2s}\p_y\wt w \, dtdx  \\
						&\qquad\qquad   \  -\lim_{k\to \infty}\int_{\R^{n+2}_+}y^{1-2s}\wt \sigma \nabla_{x,y} \wt w \cdot \nabla_{x,y} \wt u_{f,k}\, dtdxdy \bigg\} \\
						&= \int_{W_T} f \lim_{y\to 0}y^{1-2s}\p_y \wt w \, dtdx +\lim_{k\to \infty} I_k ,
					\end{split}
				\end{align}
				where 
				\begin{equation}
					\begin{split}
						I_k:= &- \int_{\R^{n+2}_+}y^{1-2s}\wt \sigma \nabla_{x,y} \wt w \cdot \nabla_{x,y} \wt u_{f,k}\, dtdxdy +\int_{\R^{n+2}_+} y^{1-2s} \wt u_{f,k} \p_t \wt w\, dtdxdy\\
						= &- \int_{\R^{n+2}_+}y^{1-2s}\wt \sigma \nabla_{x,y}  \wt u_{f,k} \cdot \nabla_{x,y}  \wt w\, dtdxdy +\int_{\R^{n+2}_+} y^{1-2s} \wt u_{f,k} \p_t \wt w\, dtdxdy.
					\end{split}
				\end{equation}
				Next, integration by parts in the time-variable yields that 
				\begin{equation}\label{I_k}
					\begin{split}
						I_k &  = -\int_{\R^{n+2}_+}y^{1-2s}\eta_k \zeta_k \wt \sigma \LC  \nabla_{x,y} \wt u_f \RC \cdot \nabla_{x,y} \wt w \, dtdxdy  \\
						&\quad \,- \int_{\R^{n+2}_+}y^{1-2s} \wt u_f \wt \sigma  \nabla_{x,y} \LC  \eta_k \zeta_k  \RC\cdot \nabla_{x,y} \wt w  \, dtdxdy -\underbrace{\int_{\R^{n+2}_+} y^{1-2s}  \wt   w \LC \p_t \wt u_{f,k} \RC  \, dtdxdy}_{\text{By }\wt w(T,x,y)=\wt u_{f,k}(-T,x,y)=0}.
					\end{split}
				\end{equation}
				
				In \eqref{pairing for psi(v_f)}, we also point out that the limit holds
				\begin{align}
					\int_0^\infty y^{1-2s} \wt u_{f,k}(t,x,y) \, dy \to  \int_0^\infty y^{1-2s} \wt u_f (t,x,y)\, dy 
				\end{align}
				in $L^2(\R; H^1(\Omega))$ as $k\to \infty$.
				In fact, there holds 
				\begin{equation}
					\begin{split}
						&\quad \, \left\| \int_0^\infty y^{1-2s} \zeta_k(y)\eta_k(x) \wt u_f(t,x,y) \, dy- \int_0^\infty y^{1-2s} \wt u_f (t,x,y)\, dy\right\|_{L^2(\R;H^1(\Omega))} \\
						&\leq  \left\|  \int_k^{\infty} y^{1-2s}\LC \left| \wt u_f \right| + \left|\nabla  \wt u_f \right|  \RC dy \right\|_{L^2(\R;L^2(\Omega))}\\
						&\lesssim  \int_{k}^{\infty} y^{1-2s} \LC  \left\|  \wt u_f(\cdot, \cdot, y)\right\|_{L^2(\R;L^\infty(\R^n))} + \left\|  \nabla \wt u_f(\cdot, \cdot ,y )\right\|_{L^2(\R;L^\infty(\R^n))}  \RC dy \\
						&\lesssim \underbrace{ \int_{k}^{\infty} y^{1-2s-n}  \left\|  u_f(\cdot, \cdot)\right\|_{L^2(\R;L^1(\R^n))} \, dy }_{\text{By \eqref{decay estimate in x} and for $k$ large}}  \\
						&\lesssim \underbrace{ k^{2-2s-n}\norm{f}_{\widetilde{\mathbf{H}}^s(W_T)}}_{\text{By using \eqref{equ nonlocal para estimate}}}\\
						&\to 0,
					\end{split}
				\end{equation}
				as $k\to \infty$, for $n\geq 2$, where we used $ u_f(t,x)\in \mathbf{H}^s(\R^{n+1})$ has compact support in $\overline{(\Omega\cup W)_T}$. We next analyze the limit of $I_k$ as $k\to \infty$.
				
				\medskip
				
				{\it Step 2. $\displaystyle\lim_{k\to \infty}I_k=0$.}  
				
				\medskip
				
				\noindent We want to claim that $\lim_{k\to \infty}I_k=0$. The argument is similar to the proof of \cite[Proposition 3.1]{CGRU2023reduction}, we provide detailed derivation for the sake of completeness. Integrating by parts on the right hand in for $I_k$ in \eqref{I_k}, one has 
				\begin{equation}\label{comp I_k1}
					\begin{split}
						I_k &=-\int_{\R^{n+2}_+} y^{1-2s}  \wt   w \LC \p_t \wt u_{f,k} \RC  \, dtdxdy- \int_{\R^{n+2}_+} y^{1-2s}\wt \sigma \nabla_{x,y} \wt  u_f\cdot \nabla_{x,y}  \LC \eta_k \zeta_k \wt w\RC \, dtdxdy  \\
						&\quad \, +\int_{\R^{n+2}_+}y^{1-2s}\wt w \wt \sigma \nabla_{x,y}  \wt u_f\cdot \nabla_{x,y} \LC \eta_k \zeta_k \RC \, dtdxdy \\
						&\quad \, + \int_{\R^{n+2}_+}\wt w \nabla_{x,y} \cdot \LC    y^{1-2s} \wt u_f  \wt \sigma \nabla_{x,y} \LC \eta_k\zeta_k \RC \RC dtdxdy \\
						&=\underbrace{ \int_{\R^{n+1}} \eta_k \wt w(t,x,0)\lim_{y\to 0}y^{1-2s}\p_y \wt u_f \, dtdx }_{(\ast)}\\
						&\quad \,  +\underbrace{ \int_{\R^{n+2}_+} \eta_k\zeta_k \wt w  \left[ \nabla_{x,y}\cdot \LC y^{1-2s}\wt \sigma \nabla_{x,y}\wt u_f\RC -y^{1-2s}\p_t \wt u_f \right] dtdxdy }_{=0,\text{ since $\wt u_f$ is a solution to \eqref{degen para PDE}}}\\
						&\quad \, +2\int_{\R^{n+2}_+} y^{1-2s} \wt w  \wt \sigma\nabla_{x,y} \LC \eta_k \zeta_k \RC \cdot \nabla_{x,y} \wt u_f \, dtdxdy \\
						&\quad \, +\int_{\R^{n+2}_+} \wt w y^{1-2s} \wt u_f  \widetilde{\mathcal{L}}  \LC\eta_k \zeta_k \RC  dtdxdy\\
						& \quad \, + (1-2s)\int_{\R^{n+2}_+} y^{-2s}\wt w \wt u_f \eta_k \p_y \zeta_k \, dtdxdy \\
						&= \int_0^{2k} \int_{B_{2Rk}} \int_{\R} y^{1-2s} \wt w \\
						&\qquad \qquad \cdot \left\{  2\wt \sigma \nabla_{x,y}\LC \eta_k \zeta_k \RC \cdot \nabla_{x,y} \wt u_f + \LC \widetilde{\mathcal{L}} \LC \eta_k \zeta_k \RC  +\frac{1-2s}{y}\eta_k \p_y \zeta_k  \RC \wt u_f   \right\} dtdxdy,
					\end{split}
				\end{equation}
				where we used the notation $\widetilde{\mathcal{L}}:=\nabla_{x,y}\cdot \wt \sigma \nabla_{x,y}$. Here we used the Lemma \ref{Lemma: weak solvable} (or Remark \ref{Remark: adjoint solution}) to guarantee the  integral $(\ast)$ in \eqref{comp I_k1} is well-defined
				\begin{equation}
					\begin{split}
						&\quad \,	\left|  \int_{\R^{n+1}} \eta_k \wt w(t,x,0)\LC \lim_{y\to 0}y^{1-2s}\p_y \wt u_f \RC dtdx \right| \\
						& \lesssim \left\| \wt w (\cdot, \cdot,0) \right\|_{\mathbf{H}^s(\R^{n+1})} \left\| \LC \p_t -\nabla \cdot \sigma(x)\nabla \RC^s \wt u_f(t,x,0) \right\|_{\mathbf{H}^{-s}(\R^{n+1})} <\infty.
					\end{split}
				\end{equation}
				In \eqref{comp I_k1}, the term $(\ast)=0$ since $\supp \LC \wt w \RC \bigcap \supp \LC \lim_{y\to0}y^{1-2s}\p_y \wt u_f \RC =\emptyset$.

				We next estimate $I_k$. Let $A_k:=B_{2Rk}\setminus B_{Rk}$, where $R>0$ is the radius given from previous steps. With the uniform boundedness of $\wt \sigma$ and $\nabla\wt \sigma$ at hand, one can derive 
				\begin{equation}
					\begin{split}
						\left| I_k \right|& \lesssim  \int_0^{2k} \int_{B_{2Rk}} \int_{\R} y^{1-2s} \left|\wt w \right| \left| \nabla_{x,y}\LC \eta_k \zeta_k\RC \right| \left| \nabla_{x,y}\wt u_f  \right| dtdxdy \\
						&\quad \, +   \int_0^{2k} \int_{B_{2Rk}} \int_{\R} y^{1-2s} \left|\wt w \right|  \LC \widetilde{\mathcal{L}} \LC \eta_k \zeta_k \RC  +y^{-1}\left| \eta_k \right| \left| \p_y \zeta_k  \right| \RC \left| \wt u_f  \right| dtdxdy \\
						&\lesssim \int_0^{2k} \int_{B_{2Rk}} \int_{\R}y^{1-2s} \left|\wt w \right| \LC \left|\nabla \eta_k\right| + \left|\p_y \zeta_k\right|  \RC \left| \nabla_{x,y}\wt u_f  \right| dtdxdy \\
						&\quad \, +   \int_0^{2k} \int_{B_{2Rk}} \int_{\R}y^{1-2s} \left|\wt w \right| \LC\left| \nabla\cdot \sigma \nabla \eta_k \right|  + \left| \p_y^2 \zeta_k \right|^2 +y^{-1}  \left| \p_y \zeta_k  \right|  \RC  \left| \wt u_f  \right|  dtdxdy \\
						&\lesssim I_{1,k}\LC \wt w\RC +I_{2,k} \LC\wt w \RC,
					\end{split}
				\end{equation}
				where 
				\begin{equation}
					I_{1,k}\LC \wt w\RC := k^{-1}\int_k^{2k} \int_{\R^{n+1}} y^{1-2s}\left| \wt w \right| \LC \left| \nabla_{x,y} \wt u_f \right| + k^{-1}\left| \wt u_f \right|  \RC dtdxdy,
				\end{equation}
				and 
				\begin{equation}
					I_{2,k} \LC\wt w \RC := k^{-1}\int_0^{2k}\int_{A_k} \int_{\R} y^{1-2s}\left| \wt w \right| \LC  \left|\nabla_{x,y}\wt u_f\right| +\left| \wt u_f\right|  \RC dtdxdy.
				\end{equation}
				We next estimate $I_{1,k}(\wt w)$ and $I_{2,k}(\wt w)$ separately.

				\medskip 
				
				\noindent{\it Step 2a. Estimate for $I_{1,k}(\wt w)$.} Note that the function $\wt w$ is constructed from the solution $w= \wt u_1 -\mathcal{E}_s u_1 + u_2 $ given by \eqref{w constructed in Lemma solve} (by reversing time $t\mapsto -t$), and we abuse the notation $\wt w$ as the form $ \wt u_1 -\mathcal{E}_s u_1 + u_2$ (here we already replace $u_1(t,x)$and $u_2(t,x,y)$ by $u_1(-t,x)$ and $u_2(-t,x,y)$, respectively). Here $\wt u_1$, $\mathcal{E}_su_1$ and $u_2$ are the solutions to \eqref{equ wt u_1}, \eqref{equ u_2} and \eqref{equ E_s u_1}, respectively. Let us consider the bound 
				\begin{equation}
					\int_{\R^{n+1}} \abs{\mathbf{w}(t,x,y)} \left| \nabla_{x,y}^\ell \wt u_f(t,x,y)\right| dtdx 
				\end{equation}
				for $\ell=0,1$, where the function $\mathbf{w}$ could be any of the functions $\wt u_1$, $\mathcal{E}_s u_1$ or $u_2$. By Lemma \ref{Lemma: decay estimate} and Lemma \ref{Lemma: weak solvable}, it is known that both functions $\mathcal{E}_s u_1$ and $u_2$ have decay in the $y$-direction, but the function $\wt u_1$ does not have such decay. Hence, we divide the proof into two parts: $\mathbf{w}=\wt u_1$ and $\mathbf{w}=\mathcal{E}_s u_1, u_2$. 
				
				\begin{itemize}
					\item For $\mathbf{w}=\wt u_1$, for $\ell=0,1$, by the H\"older's inequality, we have 
					\begin{equation}\label{Step 2a 1}
						\begin{split}
							&\quad \, 	\int_{\R^{n+1}} \abs{\mathbf{w}(t,x,y)} \left| \nabla_{x,y}^\ell \wt u_f(t,x,y)\right| dtdx \\
							&\leq\underbrace{ \norm{\mathbf{w}(\cdot,\cdot,y)}_{L^{2}(\R^{n+1})} \left\| \nabla_{x,y}^\ell \wt u_f(\cdot,\cdot,y) \right\|_{L^{2}(\R^{n+1})}}_{\text{By \eqref{regularity of u_1}, } \mathbf{w}=u_1 \in L^\infty(\R; H^1(\R^n))}\\
							&\lesssim \underbrace{y^{\frac{n}{2}-n-\ell}  \norm{\mathbf{w}}_{L^{2}(\R^{n+1})} \left\|  u_f \right\|_{L^2(\R;L^{1}(\R^n))}}_{\text{By \eqref{decay estimate in y}}}.
						\end{split}
					\end{equation} 
					By the H\"older's inequality so that $u_f\in \widetilde{\mathbf{H}}^s\LC (\Omega\cup W)_T 
					\RC\subset L^1\LC (\Omega\cup W)_T \RC$ with 
					\begin{equation}\label{estimate wt u_f}
						\left\|  u_f  \right\|_{L^2(\R; L^1(\R^{n}))}\lesssim \left\| f\right\|_{\widetilde{\mathbf{H}}^s(W_T)}.
					\end{equation} 
					As a result, we can summarize 
					\begin{equation}\label{I_{1,k} estimate case 1}
						\begin{split}
							I_{1,k}(\wt w) &\lesssim k^{-1} \left\| \mathbf{w}\right\|_{L^\infty(\R;L^2(\R^n))}\norm{f}_{\widetilde{\mathbf{H}}^s(W_T)}\int_k^{2k} y^{1-\frac{n}{2}-\ell-2s}\, dy\\
							&\lesssim k^{1-\frac{n}{2}-\ell-2s}  \left\| \mathbf{w}\right\|_{L^\infty(\R;L^2(\R^n))}\norm{f}_{\widetilde{\mathbf{H}}^s(W_T)},
						\end{split}
					\end{equation}
					for $\ell=0,1$, where we use $\mathbf{w}=u_1$ is uniform bounded in $L^\infty(\R;L^2(\R^n))$. Via \eqref{I_{1,k} estimate case 1}, one can see that $I_{1,k}\to 0$ as $k\to \infty$ as desired.

					\item  For $\mathbf{w}=\mathcal{E}_s u_1, u_2$, we denote by $\mathbf{w}$ any of the functions $\mathcal{E}_s u_1, u_2$, and for $\ell\in \{0,1\}$. Then  the decay estimate \eqref{decay estimate in y} for the function $\wt u_f$ implies that 
					\begin{equation}
						\begin{split}
							&\quad \, 	\int_{\R^{n+1}} \abs{\mathbf{w}(t,x,y)} \left| \nabla_{x,y}^\ell \wt u_f(t,x,y)\right| dtdx  \\
							&\leq \norm{\mathbf{w}(\cdot,\cdot,y)}_{L^{2}(\R^{n+1})} \left\| \nabla_{x,y}^\ell \wt u_f(\cdot,\cdot,y) \right\|_{ L^{2}(\R^{n+1})} \\
							&\lesssim \underbrace{y^{-\frac{n}{2}-\ell} \norm{\mathbf{w}(\cdot,\cdot,y)}_{L^{2}(\R^{n+1})} \left\| u_f\right\|_{L^2(\R; L^1(\R^{n}))} }_{\text{Applying \eqref{decay estimate in y} for $r=p=2$ and $q=1$}},
						\end{split}
					\end{equation}
					for $\ell=0,1$.
					Similar to the previous case, we also have $u_f\in \widetilde{\mathbf{H}}^s\LC (\Omega\cup W)_T \RC\subset L^2\LC (\Omega\cup W)_T \RC$ with \eqref{estimate wt u_f}, then the H\"older's inequality with respect to the $y$-direction yields that 
					\begin{equation}\label{I_{1,k} estimate case 2}
						\begin{split}
							&\quad \, I_{1,k}(\wt w) \\
							& \lesssim   k^{-1}\int_{k}^{2k} y^{1-2s-\frac{n}{2}-\ell} \norm{\mathbf{w}(\cdot,\cdot,y)}_{L^2(\R^{n+1})} \left\| u_f\right\|_{L^2(\R; L^1(\R^{n}))} \, dy\\
							&\lesssim k^{-1} \norm{\mathbf{w}}_{\mathcal{L}^{1,2}(\R^{n+2}_+;y^{1-2s}dtdxdy)} \left\| f\right\|_{\widetilde{\mathbf{H}}^s(W_T)} \LC \int_{k}^{2k}y^{1-n-2\ell-2s}\, dy\RC^{1/2} \\
							&\lesssim  k^{-\frac{n}{2}-\ell-s} \norm{\mathbf{w}}_{\mathcal{L}^{1,2}(\R^{n+2}_+;y^{1-2s}dtdxdy)}\left\| f\right\|_{\widetilde{\mathbf{H}}^s(W_T)},
						\end{split}
					\end{equation}
					for $\ell=0,1$, where we have used that $\mathbf{w}$ is a solution of either \eqref{equ E_s u_1} or \eqref{equ u_2} so that $ \norm{\mathbf{w}}_{\mathcal{L}^{1,2}(\R^{n+2}_+;y^{1-2s}dtdxdy)}<\infty$. Via \eqref{I_{1,k} estimate case 2}, one can also see that $I_{1,k}(\wt w)\to 0$ as $k\to \infty$ as desired. 
				\end{itemize}
				Therefore, $I_{1,k}(\wt w)\to 0$ as $k\to \infty$.
				
				\medskip
				
				\noindent{\it Step 2b. Estimate for $I_{2,k}(\wt w)$.}
				Recall \eqref{comp of decay x}, we have for $m\in [1,2]$ that
				\begin{equation}
					\begin{split}
						&\quad \, \left\|   \nabla^\ell_x \wt u(\cdot,x ,y)  \right\|_{L^m(\R)} \\
						&\lesssim y^{2s}\int_{\R^n} \left\{ \LC\abs{x-z}^2 +y^2 \RC^{-\frac{n+\ell}{2}-s} \norm{u(\cdot, z)}_{L^m(\R)} \right\} dz,
					\end{split}
				\end{equation}
				for $\ell=0,1$.
				To proceed, we need to analyze the kernel function $\mathbf{K}_y(x):=\frac{y^{2s}}{(|x|^2 +y^2)^{\frac{n}{2}+s}}$ as in Lemma \ref{Lemma: decay estimate}. Via \eqref{comp of decay x}, one has 
				\begin{equation}\label{no time estimate in I_{2,k}}
					\begin{split}
						\left\| \wt u_f(t, \cdot,y) \right\|_{L^m(\R; L^r(A_k))}^r &\lesssim  y^{2sr}\int_{A_k}\left| \LC \wt u_f (t,\cdot, 0) \ast \mathbf{K}_y\RC (x) \right|^r \, dx \\
						&\lesssim y^{2sr} \int_{A_k} \LC \int_{\Omega\cup W} \frac{  \norm{ u_f(\cdot, z)}_{L^m(\R)} }{(|x-z|^2 +y^2)^{\frac{n}{2}+s}} \, dz \RC^r\, dx \\
						& \lesssim \frac{y^{2sr}k^n}{(k^2 + y^2 )^{(\frac{n}{2}+s)r}} \left\| u_f\right\|_{L^m(\R ; L^1(\R^{n}))}^r.
					\end{split}
				\end{equation}
				
				Similarly, there holds
				\begin{equation}\label{I_{2,k} estimate case Lm}
					\begin{split}
						\left\| \nabla_{x,y}^\ell\wt u_f(\cdot, \cdot, y) \right\|_{L^m(\R; L^r(A_k))} \lesssim \frac{y^{2s-\ell}k^{\frac{n}{r}}}{(k^2 + y^2 )^{\frac{n}{2}+s}} \left\|  u_f  \right\|_{L^m(\R ; L^1(\R^{n}))},
					\end{split}
				\end{equation}
				for $\ell=0,1$.

				We use analogous strategy as in {\it Step 2a}, i.e., we derive the estimate by considering two cases:
				\begin{itemize}
					\item For $\mathbf{w}=\wt u_1$, for $\ell=0,1$, by using the H\"older's inequality as in the previous computation, we have 
					\begin{equation}
						\begin{split}
							&\quad \,	\int_{A_k}\int_{\R} \abs{\mathbf{w}(t,x,y)} \left| \nabla_{x,y}^\ell \wt u_f(t,x,y)\right| dtdx \\
							&\leq \norm{\mathbf{w}}_{L^2(\R^{n+1})} \left\| \nabla_{x,y}^\ell \wt u_f(t,x,y) \right\|_{L^2(\R; L^2(A_k))} \\
							&\lesssim\underbrace{ \frac{y^{2s-\ell}k^{\frac{n}{2}}}{(k^2+y^2)^{\frac{n}{2}+s}} \norm{\mathbf{w}}_{L^2(\R^{n+1})} \left\| u_f  \right\|_{L^2(\R; L^1(\R^{n}))}}_{\text{By \eqref{I_{2,k} estimate case Lm} as }m=1,r=2},
						\end{split}
					\end{equation}
					which infers 
					\begin{equation}\label{I_{2,k} estimate case L11}
						\begin{split}
							&\quad\, k^{-1} \int_0^{2k}\int_{A_k}\int_{\R}y^{1-2s}\abs{\mathbf{w}(t,x,y)}\left| \nabla_{x,y}^\ell \wt u_f(t,x,y)\right| dtdx dy \\
							&\lesssim  \norm{\mathbf{w}}_{L^2(\R^{n+1})} \left\|  u_f  \right\|_{L^2(\R;L^1(\R^{n}))} k^{-1+\frac{n}{2}} \int_0^{2k} \frac{y^{1-\ell}}{(k^2+y^2)^{\frac{n}{2}+s}}\, dy \\
							&\lesssim \norm{\mathbf{w}}_{L^2(\R^{n+1})} \norm{f}_{\widetilde{\mathbf{H}}^s(W_T)}\underbrace{k^{-\frac{n}{2}-\ell-2s} \int_0^2  \frac{\tau^{1-\ell}}{(1+\tau)^{\frac{n}{2}+s}}\, d\tau}_{\text{Let }y:=k\tau},
						\end{split}
					\end{equation}
					where we use $\mathbf{w}=u_1$ is uniform bounded in $L^2(\R^{n+1})$ and $u_f \in \mathbf{H}^s(\R^{n+1})$ is supported in a compact set. Now, since $\int_0^2  \frac{\tau^{1-\ell}}{(1+\tau)^{\frac{n}{2}+s}}\, d\tau<\infty$ for $\ell=0,1$, via \eqref{I_{2,k} estimate case L11}, one can see that $I_{2,k}(\wt w)\to 0$ as $k\to \infty$ as we wish.
					
					\item For $\mathbf{w}=\mathcal{E}_s u_1, u_2$, we denote by $\mathbf{w}$ any of the functions $\mathcal{E}_s u_1, u_2$, and for $\ell\in \{0,1\}$. Similar to the previous case,  the decay estimate \eqref{decay estimate in y} for the function $\wt u_f$ infers 
					\begin{equation}
						\begin{split}
							&\quad \,	\int_{A_k}\int_{\R} \abs{\mathbf{w}(t,x,y)} \left| \nabla_{x,y}^\ell \wt u_f(t,x,y)\right| dtdx \\
							&\leq \norm{\mathbf{w}(\cdot, \cdot,y)}_{L^2(\R^{n+1})} \left\| \nabla_{x,y}^\ell \wt u_f(t,x,y) \right\|_{L^2(\R; L^2(A_k))} \\
							&\lesssim\underbrace{ \frac{y^{2s-\ell} k^{\frac{n}{2}}}{(k^2+y^2)^{\frac{n}{2}+s}} \norm{\mathbf{w}(\cdot,\cdot,y)}_{L^2(\R^{n+1})} \left\|  u_f  \right\|_{L^2(\R; L^1(\R^{n}))}}_{\text{By \eqref{decay estimate in y} for $\mathbf{w}$ and \eqref{I_{2,k} estimate case Lm} for $\wt u_f$ as }m=r=p=2,\ q=1}\\
							&\lesssim\underbrace{ \frac{y^{2s-\ell} k^{\frac{n}{2}}}{(k^2+y^2)^{\frac{n}{2}+s}} \norm{\mathbf{w}(\cdot,\cdot,y)}_{L^2(\R^{n+1})} \left\|  f  \right\|_{\widetilde{\mathbf{H}}^s(W_T)}}_{\text{By using \eqref{equ nonlocal para estimate}}},
						\end{split}
					\end{equation}
					for $\ell=0,1$. In addition, the H\"older's inequality yields that 
					\begin{equation}\label{I_{2,k} estimate case L21}
						\begin{split}
							&\quad \, k^{-1}\int_0^{2k}\int_{A_k}\int_{\R}y^{1-2s} \abs{\mathbf{w}(t,x,y)} \left| \nabla_{x,y}^\ell \wt u_f(t,x,y)\right| dtdxdy \\
							&\lesssim  \norm{\mathbf{w}}_{\mathcal{L}^{1,2}(\R^{n+2}_+;y^{1-2s}dtdxdy)} \left\| f\right\|_{\widetilde{\mathbf{H}}^s(W_T)}k^{-1+\frac{n}{2}}\LC \int_0^{2k} \frac{y^{1+2s-2\ell}}{(k^2+y^2)^{n+2s}}\, dy\RC^{1/2} \\
							&\lesssim  \norm{\mathbf{w}}_{\mathcal{L}^{1,2}(\R^{n+2}_+;y^{1-2s}dtdxdy)}  \left\| f\right\|_{\widetilde{\mathbf{H}}^s(W_T)} \underbrace{k^{-\frac{1}{2}-\frac{n}{2}-s-\ell}\LC \int_0^2 \frac{\tau^{1+2s-2\ell}}{(1+\tau^2)^{n+2s}}\, d\tau\RC^{1/2}}_{\text{Let }y:=k\tau},
						\end{split}
					\end{equation}
					where we used $u_f\in \widetilde{\mathbf{H}}^s\LC (\Omega\cup W)_T \RC\subset L^1\LC (\Omega\cup W)_T \RC$ and \eqref{estimate wt u_f}. Now, since $\int_0^2 \frac{\tau^{1+2s-2\ell}}{(1+\tau^2)^{n+2s}}\, d\tau<\infty$, via \eqref{I_{2,k} estimate case L21}, we have that $I_{2,k}(\wt w)\to 0$ as $k\to \infty$ as we want.
					
				\end{itemize}
				Therefore, $I_{2,k}(\wt w)\to 0$ as $k\to \infty$.

				In summary, one can conclude that $\lim_{k\to \infty}I_k =0$. With \eqref{pairing for psi(v_f)} at hand, one has 
				\begin{equation}
					\int_{W_T} f\lim_{y\to 0}y^{1-2s}\p_y \wt w \, dtdx =0.
				\end{equation}
				By arbitrary choice of $f\in C^\infty_c(W_T)$, we must have $\lim_{y\to 0}y^{1-2s}\p_y \wt w=0$ in $W_T\times \{0\}$. Moreover, since $\wt w =0$ in $W_T\times \{0\}$ as well, the unique property for second order parabolic equations (see \cite[Corollary 1.2]{Sogge_UCP} for instance) yields that $\wt w\equiv 0$ in $(\Omega_e)_T\times (0,\infty)$. In particular, one has $\left. \wt w \right|_{(\p \Omega)_T\times (0,\infty)}=\left. \wt \sigma \nabla_{x,y}\wt w  \right|_{(\p \Omega)_T\times (0,\infty)}=0$ in the sense of distribution, and $\lim_{y\to 0}y^{1-2s} \p_y \wt w=0$ in $\R^{n+1}\times \{0\}$.

				Let us review auxiliary functions constructed in \cite[Proposition 3.1]{CGRU2023reduction} that make it convenient for readers.
				As shown in the proof of \cite[Proposition 3.1]{CGRU2023reduction}, we recall an additional cutoff function to avoid boundary contributions. To this end,  let $\mu: [0,1]\to [0,1]$ be a smooth function on $[0,1]$ with $\mu(0)=0$, $\mu(1)=1$. Moreover, one can assume that 
				\begin{equation}
					\int_0^1 \mu(y)\, dy=\frac{1}{2} \quad \text{and}\quad \left|  \p_y^\ell \mu(y) \right| \leq C,\quad y\in(0,1)
				\end{equation}
				for $\ell=0,1,2$, where $C>1$ is a constant.
				
				Given $b\in (0,1)$, let $\gamma_b: (-\infty,\infty)\to (0,b)$ be a smooth function defined by
				\begin{equation}
					\gamma_b=
					\begin{cases}
						0, & \text{ if } y<0,\\
						b\mu(y), & \text{ if } y\in [0,1],\\
						b, & \text{ if } y\in [1,\frac{1}{1-b}],\\
						b\mu(\frac{2-b}{1-b}-y), & \text{ if } y\in [\frac{1}{1-b},\frac{2-b}{1-b}],\\
						0, & \text{ if } y> \frac{2-b}{1-b}=\frac{1}{1-b}+1.
					\end{cases}
				\end{equation}
				From the construction, it is easy to see that
				\begin{equation}
					\int_0^\infty \gamma_b(y)\, dy=\frac{b}{1-b} \quad \text{and}\quad \left|  \p_y^\ell \gamma _b \right| \leq C b,
				\end{equation}
				for $\ell=0,1,2$, where $C>1$ is a constant independent of $b\in (0,1)$. Consider 
				\begin{equation}
					J_{b,k}:=\int_0^\infty (y+k)^{1-2s}\gamma_b(y)\, dy =\int_0^{\frac{2-b}{1-b}} (y+k)^{1-2s}\gamma_b(y)\, dy,
				\end{equation}
				where $J_{b,k}$ depends continuously on the parameter $b\in (0,1)$.
				
				We observe for $0\leq y\leq \frac{2-b}{1-b}$ that 
				\begin{equation}
					\begin{cases}
						k^{1-2s}\leq (y+k)^{1-2s}\leq (\frac{2-b}{1-b}+k)^{1-2s}=k^{1-2s} \LC 1+\frac{2-b}{k(1-b)} \RC^{1-2s}, & \text{ if }s\in (0,\frac{1}{2}],\\
						(\frac{2-b}{1-b}+k)^{1-2s}=k^{1-2s} \LC 1+\frac{2-b}{k(1-b)} \RC^{1-2s}\leq (y+k)^{1-2s}\leq k^{1-2s}, &\text{ if }s\in (\frac{1}{2},1).
					\end{cases}
				\end{equation}
				Then, we have the estimate for $J_{b,k}$ in the following:
				\begin{equation}
					\begin{cases}
						\frac{b}{1-b}k^{1-2s}\leq J_{b,k}\leq \frac{b}{1-b}k^{1-2s} \LC 1+\frac{2-b}{k(1-b)} \RC^{1-2s}, & \text{ if }s\in (0,\frac{1}{2}],\\
						\frac{b}{1-b}k^{1-2s} \LC 1+\frac{2-b}{k(1-b)} \RC^{1-2s}\leq J_{b,k}\leq \frac{b}{1-b}k^{1-2s}, &\text{ if }s\in (\frac{1}{2},1).
					\end{cases}
				\end{equation}
				One can see that for $b\in (0,1)$, the value $J_{b,k}$ can be both arbitrarily large and arbitrarily close to $0$. Hence, by the continuity, for any $k\in \N$, we can find $b_{k,s}\in (0,1)$ such that $J_{b_{k,s},k}=1$. Consider $\beta_k(y):=\gamma_{b_{k,s}}(y-k)$, and 
				\begin{equation}\label{R_k,s}
					R_{k,s}:=k+\frac{1}{1-b_{k,s}}.
				\end{equation} 
				For $0<s<\frac{1}{2}$, we observe that $\LC 1+\frac{2-b}{k(1-b)} \RC^{1-2s}>1$ and needs 
				\begin{equation}
					bk^{1-2s}\leq	\frac{b}{1-b}k^{1-2s}\leq 1.
				\end{equation} 
				Thus, if $0<s<\frac{1}{2}$, then there exists a $k_s$ such that for $k\geq k_s$
				\begin{equation}\label{b_k,s}
					b_{k,s}\leq k^{2s-1},\quad	R_{k,s}=k+\frac{1}{1-b_{k,s}}\leq k+2.
				\end{equation}
				Combined with the previous constructions, for $s\in (0,1)$, let us consider the function $\beta_k : (0,\infty)\to [0,1]$ in the following form:
				\begin{itemize}
					\item For $s\neq 1/2$,
					\begin{equation}\label{beta_k}
						\begin{cases}
							\supp \LC \beta_k \RC \subseteq \LC k, R_{k,s}+1\RC, \\
							\beta_k(y)=b_{k,s} \text{ for }y\in (k+1, R_{k,s}), \\
							\left|  \p_y^\ell \beta_k(y) \right| \leq Cb_{k,s}, \\
							\int_0^\infty y^{1-2s}\beta_k (y)\, dy=1,
						\end{cases}
					\end{equation}
					for any $\ell=0,1,2$ and $k\in \N$, where the constant $C>1$ is independent of $k \in \N$;
					\item For $s=1/2$, let us consider $\beta \in C^\infty_c(0,\infty)$ such that 
					\begin{equation}\label{beta_k s=0.5}
						\beta \geq 0, \quad   \int_0^\infty \beta(y)\, dy=1, \quad  \supp (\beta)\subset (1,2), \quad \text{and} \quad \beta_k(y):= 1/k\beta(y/k)
					\end{equation} 
					for $k\in \N$.
				\end{itemize}
				We want to use this function $\beta_k$ to prove the density result by the classical Hahn-Banach argument, and we will handle our arguments in different cases as $s\neq 1/2$ and $s=1/2$ later.

				Let $v\in \mathcal{D} \subset L^2(-T,T;H^1(\Omega))$. Since $v(-T,x)=0 \text{ in }\Omega$. We let $E$ be an extension with 
				\begin{equation}\label{reg of Ev}
					Ev \in L^2(\R;H^1(\R^n)) \quad \text{and}\quad \p_t Ev \in L^2(\R; H^{-1}(\R^{n})),
				\end{equation}
				so that the support of $Ev(t,x)$ is contained in $(-T, \wt T)\times \Omega'$, where $\Omega'\supset \Omega$ is a bounded set in $\R^n$ and $\wt T> T$. 		    
				With the condition of $\psi\in L^2(-T,T;\wt H^{-1}(\Omega))$ at hand, we have 
				\begin{equation}\label{psi(v)}
					\begin{split}
						\psi(v) &=\left\langle \psi,v\int_0^\infty y^{1-2s}\beta_k \, dy \right\rangle_{L^2(-T,T; H^{-1}(\R^n)),L^2(-T,T;H^1(\R^n))}\\
						&= \left\langle  \psi, \int_0^\infty y^{1-2s}\beta_k Ev\, dy \right\rangle_{L^2(\R; H^{-1}(\R^n)),L^2(\R;H^1(\R^n))},
					\end{split}
				\end{equation}
				where $\beta_k$ is given by \eqref{beta_k}. 
				On the other hand, by \eqref{reg of Ev}, Lemma \ref{Lemma: reg of local para imply nonlocal para} implies that $\beta_k Ev \in \mathcal{L}^{1,2}_{c,0}(\R^{n+2}_+, y^{1-2s}dtdxdy)$ (see the definition \eqref{admissible test function set} for $\mathcal{L}^{1,2}_{c,0}(\R^{n+2}_+, y^{1-2s}dtdxdy)$), similar to the computations \eqref{pairing for psi(v_f)}, via \eqref{psi(v)}, we can deduce 
				\begin{equation}\label{psi(v) comp}
					\begin{split}
						&\quad \, \psi(v)\\
						&= \underbrace{\psi \LC \int_0^\infty y^{1-2s} \beta_k (y) v\, dy \RC}_{\text{since }\int_0^\infty y^{1-2s} \beta_k (y) \, dy=1}\\
						&=\underbrace{\int_{\R^{n+2}_+} \left[ y^{1-2s} \beta_k Ev \p_t \wt w +\nabla_{x,y}\cdot \LC y^{1-2s} \wt \sigma \nabla_{x,y} \wt w \RC \beta_k Ev   \right] dtdxdy}_{\text{since }v(-t,x)=\wt w(t,x)=0 \text{ for all }t\geq T} \\
						&= \int_{\R^{n+2}_+} \left[ y^{1-2s}\beta_k Ev \p_t \wt w - y^{1-2s} \wt \sigma \nabla_{x,y}\wt w \cdot \nabla_{x,y}\LC \beta_k Ev \RC \right] dtdxdy \\
						&= \int_{\R^{n+2}_+} y^{1-2s}\beta_k Ev \p_t \wt w  \, dtdxdy -\int_{\R^{n+2}_+}  y^{1-2s}  Ev \p_y \beta_k \p_y \wt w \, dtdxdy \\
						&\quad \, - \int_{\R^{n+1}}\sigma \nabla \LC Ev \RC  \cdot \nabla \LC \int_0^\infty y^{1-2s} \beta_k \wt w \, dy \RC    dtdx\\
						&=\underbrace{ \int_{\Omega_T} v \p_t \LC \int_0^\infty y^{1-2s}  \beta_k \wt w  \, dy \RC dtdx - \int_{\Omega_T}\sigma \nabla v   \cdot \nabla \LC \int_0^\infty y^{1-2s} \beta_k \wt w \, dy \RC    dtdx}_{(\ast \ast )} \\
						&\quad \, -\int_0^\infty\int_{\Omega_T}  y^{1-2s}  v \p_y \beta_k \p_y \wt w \, dtdxdy.
					\end{split}
				\end{equation}
				We next want to claim that the term $(\ast \ast)$ in \eqref{psi(v) comp} vanishes by making use of the equation of $v\in \mathcal{D}$.
				
				We point out that the function $\int_0^\infty y^{1-2s} \beta_k \wt w \, dy$ is an admissible test function to this equation, since 
				\begin{equation}
					\begin{split}
						&\quad \, \left\| \int_0^\infty y^{1-2s} \beta_k \wt w \, dy\right\|_{L^2(-T,T;H^1(\Omega))}^2 \\
						&= \int_{\Omega_T} \LC \int_k^{R_{k,s}+1}y^{1-2s}\beta_k \wt w\, dy \RC^2 dtdx + \int_{\Omega_T} \LC \int_k^{R_{k,s}+1}y^{1-2s}\beta_k \nabla \wt w\, dy \RC^2 dtdx \\
						&\leq \int_{\Omega_T} \left\{\LC \int_k^{R_{k,s}+1}y^{1-2s} \beta_k^2\, dy\RC \right. \\
						&\qquad \qquad \cdot \left.\left[ \int_k^{R_{k,s}+1} y^{1-2s}\wt w^2 \, dy +  \int_k^{R_{k,s}+1} y^{1-2s}\left| \nabla \wt w \right| ^2 \, dy \right]    \right\} dtdx \\
						&\lesssim \left\| \wt w\right\|^2_{\mathcal{L}^{1,2}(\Omega_T \times (0,R_{k,s}+1), y^{1-2s}dtdxdy)} <\infty, 
					\end{split}
				\end{equation}
				where we used the H\"older's inequality. By the UCP $\wt w=0$ on $(\p \Omega)_T \times (0,\infty)$, one has $\int_0^\infty y^{1-2s}\beta_k \wt w \, dy=0$ on $(\p \Omega)_T$ so that $\int_0^\infty y^{1-2s}\beta_k \wt w \, dy\in L^2(-T,T;H^1_0(\Omega))$. Therefore, we can compute 
				\begin{equation}\label{psi(v) comp 1}
					\begin{split}
						&\quad \,  \int_{\Omega_T} v \p_t \LC \int_0^\infty y^{1-2s}  \beta_k \wt w  \, dy \RC dtdx-\int_{\Omega_T}\sigma \nabla v   \cdot \nabla \LC \int_0^\infty y^{1-2s} \beta_k \wt w \, dy \RC    dtdx \\
						&= \int_{\Omega_T}  \underbrace{\LC -\p_t v +\nabla \cdot \sigma \nabla v \RC\LC \int_0^\infty y^{1-2s}  \beta_k \wt w  \, dy \RC dtdx}_{\text{Integration by parts and }v(-T,x)=0}   \\
						&\quad \, +\int_{(\p \Omega)_T} \sigma \nabla v \cdot \nu  \LC \int_0^\infty y^{1-2s} \beta_k \wt w \, dy \RC    dtdS=0,
					\end{split}
				\end{equation}
				since $v\in \mathcal{D}$ (recalling the set $\mathcal{D}$ is defined by \eqref{the set D_rigorous}), and $\wt w=0$ on $(\p \Omega)_T\times (0,\infty)$. Insert \eqref{psi(v) comp 1} into \eqref{psi(v) comp}, then we get 
				\begin{equation}
					\begin{split}
						\psi(v)= -\int_0^\infty\int_{\Omega_T}  y^{1-2s}  v \p_y \beta_k \p_y \wt w \, dtdxdy.
					\end{split}
				\end{equation}
				The desired result $\psi(v)=0$ can be achieved by passing the limit $k\to \infty$. To this end, note that $\p_y \wt w =-\p_y \mathcal{E}_s u_1 + \p_y u_2$ (since $\wt u_1$ is $y$-independent). Therefore, if $\wt w$ is any of the functions $\mathcal{E}_s u_1, u_2$, one can estimate as the estimate \eqref{Step 2a 1} in {\it Step 2a}. By the H\"older's inequality, one obtains
				\begin{equation}\label{psi(v) comp 2}
					\begin{split}
						&\quad \, \int_{0}^{\infty} y^{1-2s}\left| \p_y \beta_k \right|\int_{\Omega_T} \left| v \p_y \wt w\right| dtdxdy \\
						& \leq \int_{0}^{\infty} y^{1-2s}\left| \p_y \beta_k \right| \norm{v}_{L^2(\Omega_T)} \left\| \p_y \wt w (\cdot, \cdot,y) \right\|_{L^2(\Omega_T)} \, dy \\
						& \leq \norm{v}_{L^2(\Omega_T)} \int_{0}^{\infty} y^{1-2s}\left| \p_y \beta_k \right|  \left\| \p_y \wt w (\cdot, \cdot,y) \right\|_{L^2(\Omega_T)} \, dy .
					\end{split}
				\end{equation}
				
				To proceed, let us split the case for $s\neq 1/2$ and $s=1/2$:
				
				\begin{itemize}
					\item For $s\neq 1/2$: Since $\beta_k$ is given by \eqref{beta_k}, using the numbers $b_{k,s}$, $R_{k,s}$ satisfying \eqref{beta_k}, we can see 
					\begin{equation}\label{psi(v) comp 3}
						\begin{split}
							&\quad \, \int_{0}^{\infty} y^{1-2s}\left| \p_y \beta_k \right|  \left\| \p_y \wt w (\cdot, \cdot,y) \right\|_{L^2(\Omega_T)} \, dy \\
							&\lesssim       \left\| \wt w\right\|_{\mathcal{L}^{1,2}(\Omega_T \times (0,\infty), y^{1-2s}dtdxdy)}\,\LC\int_{0}^{\infty} y^{1-2s} \left| \p_y \beta_k \right|^2\,dy\RC^{\frac{1}{2}} \\
							&\lesssim      |b_{k,s}|\LC\int_{k}^{k+1} y^{1-2s}\,dy+\int_{R_{k,s}}^{R_{k,s}+1}y^{1-2s}\,dy\RC^{\frac{1}{2}}\, \left\| \wt w\right\|_{\mathcal{L}^{1,2}(\Omega_T \times (0,\infty), y^{1-2s}dtdxdy)}\\
							&\lesssim      |b_{k,s}|\LC k^{1-2s} +R_{k,s}^{1-2s}\RC^{\frac{1}{2}}\left\| \wt w\right\|_{\mathcal{L}^{1,2}(\Omega_T \times (0,\infty), y^{1-2s}dtdxdy)}.
						\end{split} 
					\end{equation}
					Taking into account \eqref{b_k,s} for $0<s<\frac{1}{2}$, let us estimate $ |b_{k,s}|\LC k^{1-2s} +R_{k,s}^{1-2s}\RC^{\frac{1}{2}}$  in the following:
					\begin{equation}\label{psi(v) comp 4}
						|b_{k,s}|\LC k^{1-2s} +R_{k,s}^{1-2s}\RC^{\frac{1}{2}}\lesssim   
						\begin{cases}
							k^{\frac{2s-1}{2}},  & \text{ if }s\in (0,\frac{1}{2}),\\
							k^{\frac{1-2s}{2}}, & \text{ if }s\in (\frac{1}{2},1).
						\end{cases}
					\end{equation}
					Let $k\to \infty$, \eqref{psi(v) comp 1}, \eqref{psi(v) comp 2}, \eqref{psi(v) comp 3} and  \eqref{psi(v) comp 4} infer that  $\psi(v)=0$ holds true for $s\neq 1/2$.
					
					\item For $s= 1/2$: Since $\beta_k$ is given by \eqref{beta_k s=0.5}, by using a similar argument, one has 
					\begin{equation}\label{psi(v) comp s=1/2}
						\begin{split}
							&\quad \, \int_{0}^{\infty} \left| \p_y \beta_k \right|  \left\| \p_y \wt w (\cdot, \cdot,y) \right\|_{L^2(\Omega_T)} \, dy \\
							&\lesssim \left\| \p_y \wt w (\cdot, \cdot,y) \right\|_{L^2(\Omega_T\times (0,\infty))} \LC  \int_k^{2k} \left| \p_y \beta_k \right|^2 \, dy   \RC^{1/2} \\
							&\lesssim k^{-3/2} \left\| \p_y \wt w (\cdot, \cdot,y) \right\|_{L^2(\Omega_T\times (0,\infty))}  \\
							&\to 0,
						\end{split} 
					\end{equation}
					as $k\to \infty$.
				\end{itemize}
				In summary, for both cases $s\neq 1/2$ and $s=1/2$, we conclude that for any $\psi \in L^2(-T,T;\wt H^{-1}(\Omega))$ with $\psi(v_f)=0$, for all $f\in C^\infty_c(W_T)$, then we also obtain $\psi(v)=0$, for any $v\in \mathcal{D}$. This shows the density result by the Hahn-Banach approach. Hence, $\mathcal{V}\subset L^2(-T,T;H^1(\Omega))$ is dense in $\mathcal{D}$.
				
				Last but not least, we want to show that $\overline{\mathcal{V}'}=L^2(-T,T;H^{1/2}(\p \Omega))$ holds. To this end, given $g\in L^2(-T,T;H^{1/2}(\p \Omega))$, and consider the initial-boundary value problem 
				\begin{equation}
					\begin{cases}
						\LC \p_t -\nabla \cdot \sigma \nabla \RC u=0 &\text{ in }\Omega_T,\\
						u=g &\text{ on }(\p \Omega)_T,\\
						u(-T,x)=0 &\text{ for }x \in \Omega.
					\end{cases}
				\end{equation}
				By the definition \eqref{the set D_rigorous}, one knows that the solution $u\in \mathcal{D}$, then for any $\eps>0$, one can always find $v_\eps \in \mathcal{V}$, such that $\left\| u-v_{\eps}\right\|_{L^2(-T,T;H^1(\Omega))}\leq \eps$. By the classical trace estimate, we have 
				\begin{equation}
					\left\| g-\left. v_\eps \right|_{(\p \Omega)_T}\right\|_{L^2(-T,T;H^{1/2}(\p \Omega))}\lesssim \left\| u-v_{\eps}\right\|_{L^2(-T,T;H^1(\Omega))}\leq \eps.
				\end{equation}
				This demonstrates that the function $\left. v_\eps \right|_{(\p \Omega)_T}$ approximates $g$ on $(\p \Omega)_T$ with respect to the norm $\norm{\cdot}_{L^2(-T,T;H^{1/2}(\p \Omega))}$. Therefore, $\overline{\mathcal{V}'}=L^2(-T,T;H^{1/2}(\p \Omega))$ holds as desired. This proves the assertion.		
			\end{proof}
			
			\begin{remark}
			No matter whether $\sigma$ is isotropic or anisotropic, all the preceding analysis holds.
			\end{remark}

			\section{Proofs of main results}

			With Proposition \ref{Prop: density} at hand, we can prove Theorem \ref{Thm: main}.
			
			\begin{proof}[Proof of Theorem \ref{Thm: main}]
				By Proposition \ref{Prop: density}, the operator 
				\begin{equation}
					\begin{split}
						\mathrm{T}_1:  \widetilde{\mathbf{H}}^s (W_T) &\to L^2(-T,T;H^{1/2}(\p \Omega)),\\
						f& \mapsto \left. v_f(t,x) \right|_{(\p \Omega)_T}:=\left.\int_0^\infty y^{1-2s} \wt u_f(t,x,y) \, dy \right|_{(\p \Omega)_T}  
					\end{split}
				\end{equation}
				is linear, bounded and has dense range. The DN map $\Lambda_{\sigma}:L^2(-T,T;H^{1/2}(\p \Omega))\to L^2(-T,T;H^{-1/2}(\p \Omega))$ is continuous, combined with Proposition \ref{Proposition main theorem}, then this implies that 
				\begin{equation}
					\overline{\mathrm{T}\LC \mathcal{C}_{\sigma,W_T}^s \RC}^{L^2(-T,T;H^{1/2}(\p \Omega))\times L^2(-T,T;H^{-1/2}(\p \Omega))}=\mathcal{C}_{\sigma,(\p \Omega)_T}.
				\end{equation}
				In addition, by the UCP, the (partial) nonlocal Cauchy data 
				\begin{equation}
					\LC f|_{W_T}, \left. \LC \p_t -\nabla \cdot \sigma \nabla \RC^s u_f \right|_{W_T} \RC
				\end{equation}
				determines 
				the (full) local Cauchy data 
				$$
				\LC \left. y^{1-2s}u_f \right|_{(\p \Omega)_T\times (0,\infty)}, \left. y^{1-2s}\sigma \nabla  u_f \cdot \nu  \right|_{(\p\Omega)_T\times (0,\infty)}\RC.
				$$ 
				Therefore, the Cauchy data $\LC \left. v_f \right|_{(\p \Omega)_T}, \left. \sigma \nabla v_f \cdot \nu  \right|_{(\p \Omega)_T}\RC$ can be also determined uniquely. By using the density result, this shows that $\Lambda_{\sigma}^s$ determines $\Lambda_{\sigma}$ as desired. This completes the proof.
			\end{proof}

			\begin{proof}[Proof of Corollary \ref{Corollary: uniqueness}]
			With Theorem \ref{Thm: main} at hand, the nonlocal (partial) DN map determines the local (full) DN map. Therefore, one can have that the desired uniqueness result by the existing work \cite{canuto2001determining} for the local parabolic equation. This completes the proof.
			\end{proof}

			\begin{proof}[Proof of Corollary \ref{Corollary: non-unique}]
				By using Theorem \ref{Thm: main}, we only need to consider the local setting. By \cite{guenneau2012transformation}, one can find a diffeomorphism $\Phi :\overline{\Omega}\to \overline{\Omega}$ with $\Phi|_{\p \Omega}=\mathrm{Id}$, which transforms the parabolic equation \eqref{equ nonunique 1} to \eqref{equ nonunique 2}. Moreover, abusing the notation, we denote another diffeomorphism $\Phi$ (with the same notation) such that $\Phi: \R^n \to \R^n$ such that $\Phi|_{\R^n\setminus \Omega}=\mathrm{Id}$ (of course such $\Phi$ also satisfies $\Phi|_{\p \Omega}=\mathrm{Id}$). Thus, this $\Phi$ satisfies all required assumptions in Corollary \ref{Corollary: non-unique}. This proves the assertion.
			\end{proof}

			\bigskip

			\noindent\textbf{Acknowledgments.} 
			\begin{itemize}
				\item Y.-H. Lin is partially supported by the National Science and Technology Council (NSTC) Taiwan, under the projects 111-2628-M-A49-002 and 112-2628-M-A49-003. Y.-H. Lin is also a Humboldt research fellowship for experienced researcher. 
				
				\item	G. Uhlmann was partially supported by NSF, a Walker Professorship at University of Washington and a Si-Yuan Professorship at Institute for Advanced Study, Hong Kong University of Science and Technology.
			\end{itemize}

			\bibliography{refs} 

\begin{thebibliography}{BDLCS21}

\bibitem[AEN20]{AEN2020}
Pascal Auscher, Moritz Egert, and Kaj Nystr\"{o}m.
\newblock {$\rm L^2$} well-posedness of boundary value problems for parabolic
  systems with measurable coefficients.
\newblock {\em J. Eur. Math. Soc. (JEMS)}, 22(9):2943--3058, 2020.

\bibitem[BDLCS21]{BDLCS2021harnack}
Animesh Biswas, Marta De~Le\'{o}n-Contreras, and Pablo~Ra\'{u}l Stinga.
\newblock Harnack inequalities and {H}\"{o}lder estimates for master equations.
\newblock {\em SIAM Journal on Mathematical Analysis}, 53(2):2319--2348, 2021.

\bibitem[BGU21]{bhattacharyya2021inverse}
Sombuddha Bhattacharyya, Tuhin Ghosh, and Gunther Uhlmann.
\newblock Inverse problems for the fractional-{L}aplacian with lower order
  non-local perturbations.
\newblock {\em Trans. Amer. Math. Soc.}, 374(5):3053--3075, 2021.

\bibitem[BKS22]{BKS2022calderon}
Agnid Banerjee, Venkateswaran Krishnan, and Soumen Senapati.
\newblock The {C}alder{\'o}n problem for space-time fractional parabolic
  operators with variable coefficients.
\newblock {\em arXiv:2205.12509}, 2022.

\bibitem[CGRU23]{CGRU2023reduction}
Giovanni Covi, Tuhin Ghosh, Angkana R{\"u}land, and Gunther Uhlmann.
\newblock A reduction of the fractional {C}alder\'on problem to the local
  {C}alder\'on problem by means of the {C}affarelli-{S}ilvestre extension.
\newblock {\em arXiv preprint arXiv:2305.04227}, 2023.

\bibitem[CJKS20]{CJKS20}
Thierry Coulhon, Renjin Jiang, Pekka Koskela, and Adam Sikora.
\newblock Gradient estimates for heat kernels and harmonic functions.
\newblock {\em J. Funct. Anal.}, 278(8):108398, 67, 2020.

\bibitem[CK01]{canuto2001determining}
Bruno Canuto and Otared Kavian.
\newblock Determining coefficients in a class of heat equations via boundary
  measurements.
\newblock {\em SIAM Journal on Mathematical Analysis}, 32(5):963--986, 2001.

\bibitem[CLL19]{CLL2017simultaneously}
Xinlin Cao, Yi-Hsuan Lin, and Hongyu Liu.
\newblock Simultaneously recovering potentials and embedded obstacles for
  anisotropic fractional {S}chr\"{o}dinger operators.
\newblock {\em Inverse Probl. Imaging}, 13(1):197--210, 2019.

\bibitem[CLR20]{cekic2020calderon}
Mihajlo Cekic, Yi-Hsuan Lin, and Angkana R{\"u}land.
\newblock The {C}alder{\'o}n problem for the fractional {S}chr{\"o}dinger
  equation with drift.
\newblock {\em Cal. Var. Partial Differential Equations}, 59(91), 2020.

\bibitem[CMR21]{CMR20}
Giovanni Covi, Keijo M\"{o}nkk\"{o}nen, and Jesse Railo.
\newblock Unique continuation property and {P}oincar\'{e} inequality for higher
  order fractional {L}aplacians with applications in inverse problems.
\newblock {\em Inverse Probl. Imaging}, 15(4):641--681, 2021.

\bibitem[CMRU22]{CMRU20}
Giovanni Covi, Keijo M\"{o}nkk\"{o}nen, Jesse Railo, and Gunther Uhlmann.
\newblock The higher order fractional {C}alder\'{o}n problem for linear local
  operators: {U}niqueness.
\newblock {\em Adv. Math.}, 399:Paper No. 108246, 2022.

\bibitem[CO23]{choulli2023fractional}
Mourad Choulli and El~Maati Ouhabaz.
\newblock Fractional anisotropic {C}alder\'on problem on complete {R}iemannian
  manifolds.
\newblock {\em arXiv preprint arXiv:2303.03764}, 2023.

\bibitem[CRTZ22]{CRTZ-2022}
Giovanni Covi, Jesse Railo, Teemu Tyni, and Philipp Zimmermann.
\newblock Stability estimates for the inverse fractional conductivity problem,
  2022.

\bibitem[CRZ22]{CRZ2022global}
Giovanni Covi, Jesse Railo, and Philipp Zimmermann.
\newblock The global inverse fractional conductivity problem.
\newblock {\em arXiv preprint arXiv:2204.04325}, 2022.

\bibitem[DL92]{DL1992}
Robert Dautray and Jacques-Louis Lions.
\newblock {\em Mathematical analysis and numerical methods for science and
  technology. {V}ol. 5}.
\newblock Springer-Verlag, Berlin, 1992.
\newblock Evolution problems. I, With the collaboration of Michel Artola,
  Michel Cessenat and H\'{e}l\`ene Lanchon, Translated from the French by Alan
  Craig.

\bibitem[Fei21]{feizmohammadi2021fractional_closed}
Ali Feizmohammadi.
\newblock Fractional {C}alder\'on' problem on a closed {R}iemannian manifold.
\newblock {\em arXiv preprint arXiv:2110.07500}, 2021.

\bibitem[FGKU21]{feizmohammadi2021fractional}
Ali Feizmohammadi, Tuhin Ghosh, Katya Krupchyk, and Gunther Uhlmann.
\newblock Fractional anisotropic {C}alder\'on problem on closed {R}iemannian
  manifolds.
\newblock {\em arXiv:2112.03480}, 2021.

\bibitem[GAV12]{guenneau2012transformation}
Sebastien Guenneau, Claude Amra, and Denis Veynante.
\newblock Transformation thermodynamics: cloaking and concentrating heat flux.
\newblock {\em Optics Express}, 20(7):8207--8218, 2012.

\bibitem[GLX17]{GLX}
Tuhin Ghosh, Yi-Hsuan Lin, and Jingni Xiao.
\newblock The {C}alder\'{o}n problem for variable coefficients nonlocal
  elliptic operators.
\newblock {\em Comm. Partial Differential Equations}, 42(12):1923--1961, 2017.

\bibitem[Gri95]{Gri95}
Alexander Grigoryan.
\newblock Upper bounds of derivatives of the heat kernel on an arbitrary
  complete manifold.
\newblock {\em J. Funct. Anal.}, 127(2):363--389, 1995.

\bibitem[GRSU20]{GRSU18}
Tuhin Ghosh, Angkana R\"{u}land, Mikko Salo, and Gunther Uhlmann.
\newblock Uniqueness and reconstruction for the fractional {C}alder\'{o}n
  problem with a single measurement.
\newblock {\em J. Funct. Anal.}, 279(1):108505, 42, 2020.

\bibitem[GSU20]{GSU20}
Tuhin Ghosh, Mikko Salo, and Gunther Uhlmann.
\newblock The {C}alder\'{o}n problem for the fractional {S}chr\"{o}dinger
  equation.
\newblock {\em Anal. PDE}, 13(2):455--475, 2020.

\bibitem[GU21]{GU2021calder}
Tuhin Ghosh and Gunther Uhlmann.
\newblock The {C}alder\'{o}n problem for nonlocal operators.
\newblock {\em arXiv:2110.09265}, 2021.

\bibitem[HL19]{harrach2017nonlocal-monotonicity}
Bastian Harrach and Yi-Hsuan Lin.
\newblock Monotonicity-based inversion of the fractional {S}chr\"{o}dinger
  equation {I}. {P}ositive potentials.
\newblock {\em SIAM J. Math. Anal.}, 51(4):3092--3111, 2019.

\bibitem[HL20]{harrach2020monotonicity}
Bastian Harrach and Yi-Hsuan Lin.
\newblock Monotonicity-based inversion of the fractional {S}ch\"{o}dinger
  equation {II}. {G}eneral potentials and stability.
\newblock {\em SIAM J. Math. Anal.}, 52(1):402--436, 2020.

\bibitem[KLW22]{KLW2021calder}
Pu-Zhao Kow, Yi-Hsuan Lin, and Jenn-Nan Wang.
\newblock The {C}alder\'{o}n problem for the fractional wave equation:
  uniqueness and optimal stability.
\newblock {\em SIAM J. Math. Anal.}, 54(3):3379--3419, 2022.

\bibitem[Lin22]{lin2020monotonicity}
Yi-Hsuan Lin.
\newblock Monotonicity-based inversion of fractional semilinear elliptic
  equations with power type nonlinearities.
\newblock {\em Calc. Var. Partial Differential Equations}, 61(5):Paper No. 188,
  30, 2022.

\bibitem[LL22]{LL2020inverse}
Ru-Yu Lai and Yi-Hsuan Lin.
\newblock Inverse problems for fractional semilinear elliptic equations.
\newblock {\em Nonlinear Anal.}, 216:Paper No. 112699, 21, 2022.

\bibitem[LL23]{LL2022inverse}
Yi-Hsuan Lin and Hongyu Liu.
\newblock Inverse problems for fractional equations with a minimal number of
  measurements.
\newblock {\em Communications and Computational Analysis}, 1:72--93, 2023.

\bibitem[LLR20]{LLR2019calder}
Ru-Yu Lai, Yi-Hsuan Lin, and Angkana R\"{u}land.
\newblock The {C}alder\'{o}n problem for a space-time fractional parabolic
  equation.
\newblock {\em SIAM J. Math. Anal.}, 52(3):2655--2688, 2020.

\bibitem[LLU22]{LLU2022para}
Ching-Lung Lin, Yi-Hsuan Lin, and Gunther Uhlmann.
\newblock The {C}alder\'{o}n problem for nonlocal parabolic operators.
\newblock {\em arXiv preprint arXiv:2209.11157}, 2022.

\bibitem[LRZ22]{LRZ2022calder}
Yi-Hsuan Lin, Jesse Railo, and Philipp Zimmermann.
\newblock The {C}alder\'{o}n problem for a nonlocal diffusion equation with
  time-dependent coefficients.
\newblock {\em arXiv preprint arXiv:2211.07781}, 2022.

\bibitem[LZ23]{LZ2023unique}
Yi-Hsuan Lin and Philipp Zimmermann.
\newblock Unique determination of coefficients and kernel in nonlocal porous
  medium equations with absorption term.
\newblock {\em arXiv preprint arXiv:2305.16282}, 2023.

\bibitem[McL00]{ML-strongly-elliptic-systems}
William McLean.
\newblock {\em Strongly elliptic systems and boundary integral equations}.
\newblock Cambridge University Press, Cambridge, 2000.

\bibitem[RS18]{ruland2018exponential}
Angkana R\"{u}land and Mikko Salo.
\newblock Exponential instability in the fractional {C}alder\'{o}n problem.
\newblock {\em Inverse Problems}, 34(4):045003, 21, 2018.

\bibitem[RS20]{RS17}
Angkana R\"{u}land and Mikko Salo.
\newblock The fractional {C}alder\'{o}n problem: low regularity and stability.
\newblock {\em Nonlinear Anal.}, 193:111529, 56, 2020.

\bibitem[RZ22a]{RZ2022FracCondCounter}
Jesse Railo and Philipp Zimmermann.
\newblock Counterexamples to uniqueness in the inverse fractional conductivity
  problem with partial data.
\newblock {\em Inverse Problems and Imaging}, 2022.

\bibitem[RZ22b]{RZ2022unboundedFracCald}
Jesse Railo and Philipp Zimmermann.
\newblock Fractional {C}alder\'{o}n problems and {P}oincar\'{e} inequalities on
  unbounded domains, 2022.

\bibitem[RZ22c]{RZ-low-2022}
Jesse Railo and Philipp Zimmermann.
\newblock Low regularity theory for the inverse fractional conductivity
  problem, 2022.

\bibitem[Sog90]{Sogge_UCP}
Christopher~D. Sogge.
\newblock A unique continuation theorem for second order parabolic differential
  operators.
\newblock {\em Ark. Mat.}, 28(1):159--182, 1990.

\bibitem[ST10]{ST10}
Pablo~Ra\'{u}l Stinga and Jos\'{e}~Luis Torrea.
\newblock Extension problem and {H}arnack's inequality for some fractional
  operators.
\newblock {\em Comm. Partial Differential Equations}, 35(11):2092--2122, 2010.

\bibitem[Tyu14]{Tyl2014}
A.~I. Tyulenev.
\newblock Description of traces of functions in the {S}obolev space with a
  {M}uckenhoupt weight.
\newblock {\em Proc. Steklov Inst. Math.}, 284(1):280--295, 2014.
\newblock Translation of Tr. Mat. Inst. Steklova {\bf 284} (2014), 288--303.

\bibitem[Zim23]{zimmermann2023inverse}
Philipp Zimmermann.
\newblock Inverse problem for a nonlocal diffuse optical tomography equation.
\newblock {\em arXiv preprint arXiv:2302.08610}, 2023.

\end{thebibliography}
			
			\bibliographystyle{alpha}
			
		\end{document}